\documentclass[11pt]{article}
\usepackage{mathrsfs,amsmath,amsfonts,amssymb,bm,bbm,eufrak,dsfont,pifont,amscd,
stmaryrd,euscript,amsthm,appendix,color,epsfig,xr,yfonts,tikz,verbatim}

\usepackage[round]{natbib}
\usepackage{graphicx,color,authblk}
\usepackage{amsfonts}
\usepackage{bbm}
\usepackage{amsmath}
\usepackage{mathtools}
\usepackage{multirow}
\usepackage{tikz}
\usepackage{hyperref}
\setlength{\hoffset}{-1in}
   \setlength{\voffset}{-1in}
   \setlength{\oddsidemargin}{1.1in}
   \setlength{\textwidth}{6.42in}
   \setlength{\topmargin}{0.5in}
   \setlength{\headheight}{0.25in}
   \setlength{\headsep}{0.25in}
   \setlength{\textheight}{9.0in}
\setlength{\arraycolsep}{0.0em}
\newtheorem{theorem}{Theorem}

\theoremstyle{remark}
\newtheorem{remark}{Remark}
\theoremstyle{example} 
\newtheorem{example}{Example}

\newtheorem{appxthm}{Theorem}[section]
\newtheorem{appxlem}[appxthm]{Lemma}
\newtheorem{appxpro}[appxthm]{Proposition}

\theoremstyle{definition}

\theoremstyle{remark}

\newcommand{\bb}{\mathbb}

\newcommand{\Cal}{\mathcal}

\newcommand{\norm}[1]{\left\lVert#1\right\rVert}

\newcommand*\circled[1]{\tikz[baseline=(char.base)]{
            \node[shape=circle,draw,inner sep=2pt] (char) {#1};}}

\title{Shrinkage Estimation of Higher Order Bochner Integrals}

\author{Saiteja Utpala}
\author{Bharath K. Sriperumbudur$^*$}
\affil{$^*$Department of Statistics,  
Pennsylvania State University\\
University Park, PA 16802, USA.\\
\texttt{saitejautpala@gmail.com; bks18@psu.edu}}

\newcommand\Myperm[2][^n]{\prescript{#1\mkern-3.5mu}{}{\,\textup{P}}_{#2}}
\newcommand\Mycomb[2][^n]{\prescript{#1\mkern-3.5mu}{}\,\textup{C}_{#2}}
\DeclarePairedDelimiterX{\inp}[2]{\langle}{\rangle}{#1, #2}
\DeclarePairedDelimiter\abs{\lvert}{\rvert}%

\makeatletter
\newcommand{\vast}{\bBigg@{3.5}}
\newcommand{\Vast}{\bBigg@{4}}
\makeatother
\newcommand\numberthis{\addtocounter{equation}{1}\tag{\theequation}}

\DeclarePairedDelimiter\floor{\lfloor}{\rfloor}

\usepackage{amsmath}

\DeclareMathOperator*{\argmin}{arg\,min}
\date{}

\usepackage{scalerel,stackengine}
\stackMath
\newcommand\reallywidehat[1]{%
\savestack{\tmpbox}{\stretchto{%
  \scaleto{%
    \scalerel*[\widthof{\ensuremath{#1}}]{\kern-.6pt\bigwedge\kern-.6pt}%
    {\rule[-\textheight/2]{1ex}{\textheight}}
  }{\textheight}%
}{0.5ex}}%
\stackon[1pt]{#1}{\tmpbox}%
}
\parskip 1ex
\begin{document}
\maketitle

\begin{abstract}
 We consider shrinkage estimation of higher order Hilbert space-valued Bochner integrals in a  non-parametric setting. We propose estimators that shrink the $U$-statistic estimator of the Bochner integral towards a pre-specified target element in the Hilbert space. Depending on the degeneracy of the kernel of the $U$-statistic, we construct consistent shrinkage estimators 
and develop oracle inequalities comparing the risks of the $U$-statistic estimator and its shrinkage version. Surprisingly, we show that the shrinkage estimator designed by assuming complete degeneracy of the kernel of the $U$-statistic is a consistent estimator even when the kernel is not complete degenerate. This work subsumes and improves upon \citet{muandet2016kernel} and \citet{zhou2019class}, which only handle mean element and covariance operator estimation in a reproducing kernel Hilbert space. We also specialize our results to normal mean estimation and show that for $d\ge 3$, the proposed estimator strictly improves upon the sample mean in terms of the mean squared error.
\end{abstract}
\textbf{MSC 2010 subject classification:} Primary: 62G05; Secondary: 62F10, 62J07.\\
\textbf{Keywords and phrases:} Bernstein's inequality, Bochner integral, completely degenerate, James-Stein estimator, shrinkage estimation, SURE, $U$-statistics
\setlength{\parskip}{4pt}
%


\section{Introduction }  
Let $\mathcal{X}$ be a separable topological space and $\mathcal{H}$ be a separable Hilbert space. For a Bochner measurable function---for example, continuous functions are Bochner measurable---$r:\mathcal{X}^k\rightarrow\mathcal{H}$, where $k\in\mathbb{N}$, define the Bochner integral \citep{dinculeanu2000vector} with respect to the $k$-fold product measure $\mathbb{P}^k:=\mathbb{P}\times\stackrel{k}{\ldots}\times\mathbb{P}$ as
\begin{equation*}
C=\int_{\mathcal{X}^k} r(x_1,\ldots,x_k)\,d\mathbb{P}^k(x_1,\ldots,x_k)=\int_{\mathcal{X}^k} r(x_1,\ldots,x_k)\,\prod^k_{i=1}d\mathbb{P}(x_i).
\label{Eq:bochner}
\end{equation*}
Given $X_1,\ldots,X_n\stackrel{i.i.d.}{\sim}\mathbb{P}$, the goal of this paper to construct and analyze shrinkage estimators of $C$, of the form \begin{equation}\check{C}:=(1-\hat{\alpha})\hat{C}+\hat{\alpha} f^*=(1-\hat{\alpha})(\hat{C}-f^*)+f^*,
\label{Eq:shrinkage-estimator}
\end{equation}
where $0<\hat{\alpha}<1$ is a random variable that depends on $(X_i)^n_{i=1}$, $f^*$ is a fixed target in $\mathcal{H}$ towards which $\hat{C}$ is shrunk to, and $\hat{C}$ is the $U$-statistic estimator of $C$ given by
$$\hat{C}=\frac{1}{\Mycomb[n]{k}} \sum_{J^n_k} r(X_{i_1},\dots,X_{i_k}),$$
with $J_{k}^{n} = \{ (i_{1}, \dots, i_{k}) :   1<i_1 < i_2 <\dots <  i_{k}<n \}$. Without loss of generality, we assume that $r$ is \emph{symmetric} (see Section~\ref{Sec:notation} for the definition).

Traditionally, shrinkage estimators of the form in \eqref{Eq:shrinkage-estimator} are studied for $k=1$ and $r(x)=x$, $x\in\mathbb{R}^d$, which in fact corresponds to shrinking the empirical mean, $\bar{X}:=\frac{1}{n}\sum^n_{i=1}X_i$,  towards a fixed vector $f^*\in\mathbb{R}^d$. For $\mathbb{P}=N(\mu,\sigma^2 I)$ where $\sigma^2$ is known, \citet{Stein1956InadmissibilityOT,james1992estimation} constructed a shrinkage estimator, $\check{\mu}$ of $\mu$ of the form in \eqref{Eq:shrinkage-estimator}, given by
$$\check{\mu}=\left(1-\frac{(d-2)\sigma^2}{n\Vert \bar{X}-f^*\Vert^2_2}\right)\bar{X}+\frac{(d-2)\sigma^2}{n\Vert \bar{X}-f^*\Vert^2_2}f^*,$$ and showed that for $d\ge 3$, the shrinkage estimator, $\check{\mu}$ improves upon $\bar{X}$ in terms of the mean-squared error, i.e.,
\begin{equation}\mathbb{E}\Vert \check{\mu}-\mu\Vert^2_2<\mathbb{E}\Vert \bar{X}-\mu\Vert^2_2,\,\forall\,\mu\in\mathbb{R}^d.\label{Eq:better}\end{equation} 
When $\sigma^2$ is unknown, it can be replaced by its estimator $\hat{\sigma}^2=\frac{1}{n}\sum^n_{i=1}(X_i-\bar{X})^2$ in $\check{\mu}$ while still maintaining \eqref{Eq:better} for $d\ge 3$. Similar type of results have been established for location families of spherically symmetric distributions (see \citealp{brandwein1990stein, brandwein2012stein} and references therein).

For $k=2$ and $r(x_1,x_2)=\frac{1}{2}(x_1-x_2)(x_1-x_2)^\top,\,x_1,x_2\in\mathbb{R}^d$, \eqref{Eq:bochner} reduces to the covariance matrix associated with $\mathbb{P}$. Starting with \cite{Stein-cov-75}, a lot of work has been carried out on the shrinkage estimation of covariance matrices under the parametric setting of samples being observed from a multivariate normal distribution. Under different losses (e.g., Frobenius loss, Stein loss) and under different settings of $d\le n$, $d>n$, $d$ growing to infinity with $n$, the shrinkage estimator has been shown to strictly improve upon the sample covariance matrix (e.g., see \citealp{IEEESP:Chen-2010}, \citealp{fisher2011improved}, \citealp{Ledoit-Bernoulli-18} and references therein).  In the non-parametric setting where no specific parametric assumption is made on $\mathbb{P}$, consistent shrinkage estimators of the sample covariance matrix have been developed in the high-dimensional setting \citep{ledoit2004well,touloumis2015nonparametric}.
    
While most of the above mentioned works deal with parametric families of distributions, recently, \cite{muandet2016kernel} proposed shrinkage estimators for $C$ in the non-parametric setting without making parametric assumptions on $\mathbb{P}$, with $k=1$ and $r(x)=K(\cdot,x)$, where $K$ is the \emph{reproducing kernel} (i.e., a positive definite kernel) of a \emph{reproducing kernel Hilbert space} (RKHS)---see Section~\ref{Sec:notation} for the definition. This corresponds to the shrinkage estimation of the mean element, which is an infinite dimensional object if the RKHS is infinite dimensional. This is in sharp contrast to the above mentioned works where the parameter is finite dimensional or its dimension grows with the sample size. Extending this idea, \cite{zhou2019class} proposed shrinkage estimators for $C$ when $k=2$ and $r(x_1,x_2)=\frac{1}{2}\left(K(\cdot,x_1)-K(\cdot,X_2)\right)\otimes_\mathcal{H} \left(K(\cdot,x_1)-K(\cdot,x_2)\right)$, which corresponds to the covariance operator on an RKHS with reproducing kernel, $K$. The mean element and covariance operator has been widely used in nonparametric goodness-of-fit testing \citep{JMLR:v22:17-570}, two-sample testing \citep{JMLR:v13:gretton12a}, independence testing \citep{NIPS2007_d5cfead9}, supervised dimensionality reduction \citep{fukumizu2004dimensionality}, feature selection \citep{JMLR:v13:song12a}, etc., and therefore their shrinked versions are also useful in these applications. Of course, the choice of $K(\cdot,x)=x,\,x\in\mathbb{R}^d$, results in the mean and covariance matrix of $\mathbb{P}$ with $\mathcal{H}=\mathbb{R}^d$.

One of the key ideas in constructing a shrinkage estimator is based on minimizing an unbiased estimator of the risk, referred to as Stein Unbiased Shrinkage Estimation (SURE). Formally, suppose $\Delta=\mathbb{E}\Vert \hat{C}-C\Vert^2_\mathcal{H}$ is the mean squared error (i.e., risk) of the empirical estimator $\hat{C}$. Define $\Delta_\alpha=\mathbb{E}\Vert \hat{C}_\alpha-C\Vert^2_\mathcal{H}$, where $\hat{C}_\alpha\in \mathcal{C}=\{(1-\alpha)\hat{C}+\alpha f^*:\alpha\in\mathbb{R}\}$. Note that $(\Delta_\alpha)_\alpha$ corresponds to the family of risks associated with the estimators in $\mathcal{C}$. 
$\check{C}$ is constructed as $\hat{C}_{\hat{\alpha}}$, where $\hat{\alpha}=\arg\min_\alpha \hat{\Delta}_\alpha$, which means $\check{C}=(1-\hat{\alpha})\hat{C}+\hat{\alpha}f^*$. It can be shown that $\hat{\alpha}=\hat{\Delta}_u/\Vert \hat{C}-f^*\Vert^2_\mathcal{H}$, so that the shrinkage estimator of $C$ based on SURE is given by
$$\check{C}=\left(1-\frac{\hat{\Delta}_u}{\Vert \hat{C}-f^*\Vert^2_\mathcal{H}}\right)\hat{C}+\frac{\hat{\Delta}_u}{\Vert \hat{C}-f^*\Vert^2_\mathcal{H}}f^*,$$
where $\hat{\Delta}_u$ is an unbiased estimator of $\Delta$.

Another approach to find  $\hat{\alpha}$ is based on the observation that $\Delta_\alpha<\Delta$ if and only if     $\alpha \in \left( 0 , \frac{2 \Delta}{\Delta+ \norm{C-f^{*}}_{\mathcal{H}}^2} \right)$ with $|\Delta_\alpha-\Delta|$ maximized at \begin{equation}\alpha_{*} =  \frac{\Delta}{\Delta+ \norm{C-f^{*}}_{\mathcal{H}}^2},\label{Eq:alpha-star}\end{equation} which corresponds to the midpoint of the above interval. $\alpha_*$ can be estimated as \begin{equation}\tilde{\alpha}=\frac{\hat{\Delta}}{\hat{\Delta}+ \Vert \hat{C}-f^{*}\Vert_{\mathcal{H}}^2}\label{Eq:tilde-alpha}\end{equation}
so that
\begin{equation}\check{C}=\left(1-\frac{\hat{\Delta}}{\hat{\Delta}+\Vert \hat{C}-f^*\Vert^2_\mathcal{H}}\right)\hat{C}+\frac{\hat{\Delta}}{\hat{\Delta}+\Vert \hat{C}-f^*\Vert^2_\mathcal{H}}f^*,\label{Eq:estim-krik}\end{equation}
where $\hat{\Delta}$ is some estimator (not necessarily unbiased) of $\Delta$. This means, the SURE approach first estimates the risk and then minimizes it to find $\hat{\alpha}$ while the latter approach first finds the optimal $\alpha$ (in population) which is then estimated to find $\hat{\alpha}$. The difference in these approaches is an additional term of $\hat{\Delta}$ in the denominator of $\tilde{\alpha}$ compared to that of $\hat{\alpha}$ obtained from SURE. 

\cite{muandet2016kernel} and \cite{zhou2019class} considered the latter approach to construct a shrinkage estimator of $C$ and showed the oracle bound 
\begin{equation}\Delta_{\alpha_*}<\Delta_{\tilde{\alpha}}\le \Delta_{\alpha_*}+\mathcal{O}(n^{-3/2}),\,\,\,\text{as}\,\,\,n\rightarrow\infty,\label{Eq:oracle-1}\end{equation}
which holds for all $\bb{P}$ that satisfy certain moment conditions, and also showed $\check{C}$ to be a $\sqrt{n}$-consistent estimator of $C$. A motivation to consider this approach is as follows:  For $f^*=0$ and $r(x)=K(\cdot,x)$, we have $$\Vert \hat{C}\Vert^2_\mathcal{H}=\frac{1}{n^2}\sum^n_{i,j=1}\langle K(\cdot,X_i),K(\cdot,X_j)\rangle_\mathcal{H}=\frac{1}{n^2}\sum^n_{i,j=1}K(X_i,X_j)=\frac{1}{n^2}\mathbf{1}^\top\mathbf{K1},$$
where $\mathbf{1}=(1,\stackrel{n}{\ldots},1)^\top$ and $[\mathbf{K}]_{i,j}=K(X_i,X_j),\,i,j=1,\ldots,n$. If $K$ is not strictly positive definite, then there exists $(X_1,\ldots,X_n)$ such that $\mathbf{1}^\top\mathbf{K1}=0$, which means $\Vert \hat{C}\Vert^2_\mathcal{H}=0$ resulting in an invalid estimator. 
\subsection{Contributions} 
In this work, we generalize and improve the results of   \citep{muandet2016kernel} and \citep{zhou2019class} to any $k$ and any separable Hilbert space $\mathcal{H}$ (that is not necessarily an RKHS) without making any parametric assumptions on $\mathbb{P}$. Using the variance decomposition of the $U$-statistics, we construct an unbiased estimator, $\hat{\Delta}_{\textup{general}}$ of $\Delta$, which is used in \eqref{Eq:estim-krik} to construct the shrinkage estimator, $\check{C}=\hat{C}_{\tilde{\alpha}_{\textup{general}}}$, where $\tilde{\alpha}_{\textup{general}}$ is obtained by replacing $\hat{\Delta}$ by $\hat{\Delta}_{\textup{general}}$ in \eqref{Eq:tilde-alpha}. In Theorem~\ref{Th:NonDegC-NonDegDelta}, we show this estimator to be a $\sqrt{n}$-consistent estimator of $C$ and improve on the oracle bound in \eqref{Eq:oracle-1} by showing
\begin{equation}\Delta_{\alpha_*}<\Delta_{\tilde{\alpha}_{\textup{general}}}\le \Delta_{\alpha_*}+\mathcal{O}(n^{-2}),\,\,\,\text{as}\,\,\,n\rightarrow\infty.
\label{Eq:oracle-2}
\end{equation}
For $k\ge 2$, if $r-C$ is $\mathbb{P}$-\emph{complete degenerate} (see Section~\ref{Sec:notation} for the definition), again using the variance decomposition of degenerate $U$-statistics, we obtain an alternate estimator of $\Delta$, i.e., $\hat{\Delta}_{\textup{degen}}$, using which we show (see Theorem~\ref{Th:DegC-DegDelta}) the resulting estimator $\check{C}=\hat{C}_{\tilde{\alpha}_{\textup{degen}}}$ (obtained by using $\hat{\Delta}_{\textup{degen}}$ in \eqref{Eq:estim-krik}) to be $n^{k/2}$-consistent estimator of $C$ along with significantly faster error rates in the oracle bound:
\begin{equation}\Delta_{\alpha_*}<\Delta_{\tilde{\alpha}_{\textup{degen}}}\le \Delta_{\alpha_*}+\mathcal{O}(n^{-(3k+1)/2}),\,\,\,\text{as}\,\,\,n\rightarrow\infty,
\label{Eq:oracle-3}
\end{equation}
where $\tilde{\alpha}_{\textup{degen}}$ is obtained by replacing $\hat{\Delta}$ by $\hat{\Delta}_{\textup{degen}}$ in \eqref{Eq:tilde-alpha}. Note that in these results (Theorems~\ref{Th:NonDegC-NonDegDelta} and \ref{Th:DegC-DegDelta}), the estimator is constructed based on whether $r-C$ is $\mathbb{P}$-complete degenerate or not. In Theorem~\ref{Th:NonDegC-DegDelta}, we analyze the scenario of using $\hat{C}_{\tilde{\alpha}_{\textup{degen}}}$ as an estimator of $C$ irrespective of whether $r-C$ is $\mathbb{P}$-complete degenerate or not. We show that for $k\ge 2$, $\hat{C}_{\tilde{\alpha}_{\textup{degen}}}$ is a $\sqrt{n}$-consistent estimator of $C$ and satisfies the oracle bound:
\begin{equation*}\Delta_{\alpha_*}<\Delta_{\tilde{\alpha}_{\textup{general}}}\le \Delta_{\alpha_*}+\mathcal{O}_\mathbb{P}(n^{-3/2}),\,\,\,\text{as}\,\,\,n\rightarrow\infty,
\end{equation*}
without assuming the complete degeneracy of $r-C$. This means, $\hat{C}_{\tilde{\alpha}_{\textup{degen}}}$ has a slightly weaker oracle bound than the one in \eqref{Eq:oracle-2} but the bound improves significantly to \eqref{Eq:oracle-3} if $r-C$ is $\mathbb{P}$-complete degenerate. All these results are based on Bernstein-type inequalities for unbounded, Hilbert space-valued random elements. For the degenerate case, we extended Bernstein's inequality of 
\cite{arcones1993limit,de2012decoupling} to unbounded Hilbert space-valued random elements (see Theorem~\ref{Th:BernUstatsDeg}), which is of independent interest.

Since all the above mentioned results are obtained in the non-parametric setting, we are not able to show exact improvement of the shrinkage estimator over $\hat{C}$ but only show oracle bounds that include an additional error term. In order to understand the behavior of the proposed estimator in the parametric setting, in Section~\ref{Sec:normal}, we specialize and analyze our estimator  $\hat{C}_{\tilde{\alpha}_{\textup{general}}}$ in the normal mean estimation problem. In other words, we use $k=1$, $r(x)=x,\,x\in\mathbb{R}^d$ and $\mathbb{P}=N(\mu,\sigma^2 I)$, where $\mu$ is the parameter of interest and $\sigma^2>0$ may not be known. In this setting with $f^*=0$, it is easy to verify that 
$$\hat{C}_{\tilde{\alpha}_{\textup{general}}}=\hat{C}_{\tilde{\alpha}_{\textup{degen}}}=\frac{\Vert \bar{X}\Vert^2_2}{\frac{S^2}{n}+\Vert \bar{X}\Vert^2_2}\bar{X}=\left(1-\frac{\frac{S^2}{n}}{\frac{S^2}{n}+\Vert \bar{X}\Vert^2_2}\right)\bar{X},$$
where $S^2:=\frac{1}{n-1}\sum^n_{i=1}\Vert X_i-\bar{X}\Vert^2_2$. In Theorem~\ref{Thm:shrinkage}, we show $\hat{C}_{\tilde{\alpha}_{\textup{general}}}$ to strictly improve upon $\bar{X}$ in terms of the mean squared error, for all $\mu\in\mathbb{R}^d$, if 
$n\ge 2$ and $d\ge 4+\frac{2}{n-1}$. A small modification to this estimator, i.e.,
$$\left(1-\frac{2n-2}{3n-1}\cdot\frac{\frac{S^2}{n}}{\frac{S^2}{n}+\Vert \bar{X}\Vert^2_2}\right)\bar{X}$$
yields that for all $d\ge 3$, the above modified estimator strictly improves upon $\bar{X}$ for all $\mu\in\mathbb{R}^d$ (see Theorem~\ref{Thm:family})---a result similar to that of the James-Stein estimator. The proofs of these results are provided in Section~\ref{Sec:proofs} and additional results are provided in an appendix.

\section{Definitions and Notation}\label{Sec:notation}
For $a \triangleq (a_{1}, \dots,a_{d}) \in \mathbb{R}^{d}$, $b \triangleq (b_{1}, \dots,b_{d}) \in \mathbb{R}^{d}$, $\norm{a}_{2} \triangleq \sqrt{\sum_{i=1}^{d} a_{i}^2}$ and $\langle a,b\rangle_2=\sum^d_{i=1}a_ib_i$.   $\Mycomb[n]{i} = \frac{n!}{(n-i)!i!}$, $\Myperm[n]{i} = \frac{n!}{(n-i)!}$ and $S_{n}$ denotes the  symmetric group on $\{1,\dots, n\}$ with $\sigma \in S_{n}$ being a permutation.  $\text{U}_{k}^{n}(r) = \frac{1}{\Myperm[n]{k}} \sum_{I_{k}^{n}}  r(X_{i_1},\dots,X_{i_k}) $ denotes a $U$-statistic with kernel $r$ of order $k$ computed with $n$ variables, where $I_{k}^{n} = \{ (i_{1}, \dots, i_{k}) :   i_1 \neq i_2 \neq \dots \neq  i_{k} \}$. 
A function $r : \mathcal{X}^{k} \rightarrow \mathcal{H}$ is said to be \emph{symmetric} if it does not depend on the order of its inputs, i.e., $r(x_{1}, \dots, x_{k}) = r(x_{\sigma(1)}, \dots, x_{\sigma(k)}), \,\, \forall \sigma \in S_{k}$. When $r$ is symmetric, $\textup{U}^n_k(r)$ reduces to  $\text{U}_{k}^{n}(r) = \frac{1}{\Mycomb[n]{k}} \sum_{J_{k}^{n}}  r(X_{i_1},\dots,X_{i_k}),$ where $J_{k}^{n} = \{ (i_{1}, \dots, i_{k}) :   1<i_1 < i_2 <\dots <  i_{k}<n \}$. For a symmetric function $r : \mathcal{X}^{k} \rightarrow \mathcal{H}$ and a probability measure $\mathbb{P}$ on $\mathcal{X}$, the \emph{canonical function of order i with respect to $\mathbb{P}$}, denoted as $r_{i} : \mathcal{X}^{i} \rightarrow \mathcal{H}$, is defined as
    \begin{align*}
        r_{i}(x_1,\dots,x_i) & = \int_{\mathcal{X}^{k-i}}r (x_1,\dots,x_k) \prod_{j=i+1}^{k} d \mathbb{P}(x_j) ,
    \end{align*}
    with the convention $r_{0} := \int_{\mathcal{X}^{k}}r (x_1,\dots,x_k) \prod_{j=1}^{k} d \mathbb{P}(x_j)$ and $r_{k} := r(x_1,\dots,x_k)$.  
A symmetric function $r : \mathcal{X}^{k} \rightarrow \mathcal{H}$ is \emph{$\mathbb{P}$-complete degenerate} if \emph{(i)} $ \forall  i \in \{0,1,\dots, k-1\}$ and  $\forall x_{1},\dots, x_{i} \in \mathcal{X} $,  $r_{i}(x_1,\dots,x_{i}) = 0$; and \emph{(ii)} $r_{k}$ is not a constant function.

A real-valued symmetric function $K:\mathcal{X} \times \mathcal{X} \to \bb{R}$ is called a positive definite (pd) kernel if, for all $n \in \bb{N}$, $\{\alpha_i\}^n_{i=1} \in \mathbb{R}$ and $\{x_i\}^n_{i=1} \in \mathcal{X}$, we have $\sum_{i,j=1}^n \alpha_i \alpha_j K(x_i,x_j) \geq 0$. A function $K:\mathcal{X} \times \mathcal{X} \to \bb{R}$, $(x,y) \mapsto K(x,y)$ is a \textit{reproducing kernel} of the Hilbert space $(\mathscr{H}_K, \langle\cdot,\cdot \rangle_{\mathscr{H}_K})$ of functions if and only if \emph{(i)} $\forall x \in \mathcal{X}$, $K(\cdot, x) \in \mathscr{H}_K$ and \emph{(ii)} $\forall x \in \mathcal{X}$, $\forall f \in \mathscr{H}_K$, $\langle K(\cdot, x), f\rangle_{\mathscr{H}_K} = f(x)$ hold. If such a $K$ exists, then $\mathscr{H}_K$ is called a \textit{reproducing kernel Hilbert space}.

  \section{Main Results}\label{Sec:main results}
In this section, we present our main results related to the consistency of the shrinkage estimator and oracle bounds for the mean-squared error. Theorem~\ref{Th:NonDegC-NonDegDelta} deals with $r$ being a symmetric function while Theorem~\ref{Th:DegC-DegDelta} considers the case of when $r-C$ is $\mathbb{P}$-complete degenerate. We show that the shrinkage estimator has a faster rate of convergence when $r-C$ is $\mathbb{P}$-complete degenerate (see Theorem~\ref{Th:DegC-DegDelta}) in contrast to $r$ being simply symmetric (see Theorem~\ref{Th:NonDegC-NonDegDelta}). We would like to mention that the shrinkage estimators considered in Theorems~\ref{Th:NonDegC-NonDegDelta} and \ref{Th:DegC-DegDelta} are different as their construction is based on whether $r-C$ is $\mathbb{P}$-complete degenerate or not. In Theorem~\ref{Th:NonDegC-DegDelta}, we show that the shrinkage estimator of Theorem~\ref{Th:DegC-DegDelta}, i.e., the $\mathbb{P}$-complete degenerate case, is still a $\sqrt{n}$-consistent estimator with a slightly slow error rate in the oracle bound, even if $r-C$ is not $\mathbb{P}$-complete degenerate but only symmetric. This result is interesting as the estimator in the degenerate case is simple to compute than the estimator in the symmetric case. 

Before  we present our results, we state the following result, which provides the motivation for the estimator proposed in Theorem~\ref{Th:NonDegC-NonDegDelta}. This result is a simple extension of \citep[Theorem 3]{lee2019u} and the claim in the proof of Theorem 2 of \cite{lee2019u} to Hilbert space-valued random elements.
\begin{theorem}
    \label{Th:VarUstatDecomp}
    Let $\hat{C} =\frac{1}{\Mycomb[n]{k}} \sum_{J^n_k} r(X_{i_1},\dots,X_{i_k})$ be a U-statistics estimator of $$C=\int_{\mathcal{X}^{k}} r(x_1,\dots,x_{k}) \prod_{i=1}^{k}d \mathbb{P}(x_i),$$ where 
$r : \mathcal{X}^{k} \rightarrow \mathcal{H}$ is a symmetric function.
Let $$\kappa_{2k-i}(X_1,\dots, X_{2k-i}) =  \inp{r(X_1,\dots,X_k)}{r(X_{1}, \dots,X_{i}, X_{k+1}, \dots, X_{2k-i}}_{\mathcal{H}}$$ for each $i \in \{0,1,\dots, k\}$  . Then,
    \begin{align}
        \mathbb{E}_{X_1,\dots,X_{2k-i}}\big[ \kappa_{2k-i}(X_1,\dots, X_{2k-i})\big] &= \mathbb{E}_{X_1,\dots,X_i}\norm{ r_{i}(X_1,\dots,X_i)}_{\mathcal{H}}^2.\label{Eq:varr}
    \end{align}
    Further, 
    \begin{align}
        \Delta = \mathbb{E}  \Vert\hat{C}\Vert^2_{\mathcal{H}} - \norm{C}^2_{\mathcal{H}} = \frac{1}{\Mycomb[n]{k}} \sum_{i=1}^{k}\Mycomb[k]{i}\, \Mycomb[n-k]{k-i} \, \sigma_{i}^2,\label{Eq:delta-temp}
    \end{align}
    where $\sigma_{i}^2 =  \mathbb{E} \Vert r_{i}(X_1,\dots,X_i) \Vert_{\mathcal{H}}^2 - \Vert \mathbb{E} [r(X_1,\dots,X_{k})] \Vert_{\mathcal{H}}^2$, with $r_i$ being the canonical function of order $i$ with respect $\mathbb{P}$.
\end{theorem}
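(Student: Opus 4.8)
The plan is to establish the pointwise identity \eqref{Eq:varr} first, and then combine it with a combinatorial bookkeeping of index overlaps to derive the variance decomposition \eqref{Eq:delta-temp}. For \eqref{Eq:varr}, the key structural observation is that in $\kappa_{2k-i}$ the two copies of $r$ share exactly the variables $X_1,\dots,X_i$, while the remaining arguments $(X_{i+1},\dots,X_k)$ of the first copy and $(X_{k+1},\dots,X_{2k-i})$ of the second copy are disjoint. I would therefore condition on $X_1,\dots,X_i$ and integrate out everything else. Conditionally on $X_1,\dots,X_i$, the two random elements $r(X_1,\dots,X_k)$ and $r(X_1,\dots,X_i,X_{k+1},\dots,X_{2k-i})$ are independent, so the factorization $\mathbb{E}\inp{Y}{Z}_{\mathcal{H}}=\inp{\mathbb{E}Y}{\mathbb{E}Z}_{\mathcal{H}}$ for independent Bochner-integrable $Y,Z$ lets me pull the conditional expectation inside the inner product. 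Each conditional expectation reduces, by the definition of the canonical function and the symmetry of $r$, to $r_i(X_1,\dots,X_i)$, giving $\mathbb{E}[\kappa_{2k-i}]=\mathbb{E}\norm{r_i(X_1,\dots,X_i)}_{\mathcal{H}}^2$.

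For \eqref{Eq:delta-temp}, I would expand $\norm{\hat{C}}_{\mathcal{H}}^2$ as a double sum over pairs of increasing index tuples in $J^n_k$ and take expectations. Identifying each tuple with a $k$-subset of $\{1,\dots,n\}$, the expected inner product of the two copies of $r$ depends only on the size $i$ of the overlap of the two subsets: by \eqref{Eq:varr} it equals $\mathbb{E}\norm{r_i}_{\mathcal{H}}^2$ for $i\ge 1$, and for $i=0$ (disjoint tuples) independence gives $\inp{\mathbb{E}r}{\mathbb{E}r}_{\mathcal{H}}=\norm{C}_{\mathcal{H}}^2$. Since the number of ordered pairs of $k$-subsets with overlap exactly $i$ is $\Mycomb[n]{k}\Mycomb[k]{i}\Mycomb[n-k]{k-i}$, this yields $\mathbb{E}\norm{\hat{C}}_{\mathcal{H}}^2=\frac{1}{\Mycomb[n]{k}}\sum_{i=0}^k \Mycomb[k]{i}\Mycomb[n-k]{k-i}\,\mathbb{E}\norm{r_i}^2_{\mathcal{H}}$. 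Writing $\mathbb{E}\norm{r_i}_{\mathcal{H}}^2=\sigma_i^2+\norm{C}_{\mathcal{H}}^2$ and invoking the Vandermonde identity $\sum_{i=0}^k\Mycomb[k]{i}\Mycomb[n-k]{k-i}=\Mycomb[n]{k}$, the $\norm{C}_{\mathcal{H}}^2$ contributions collapse to exactly $\norm{C}_{\mathcal{H}}^2$; subtracting it and noting that $\sigma_0^2=0$ (because $r_0=C$ is constant) leaves the stated sum over $i=1,\dots,k$.

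The routine parts are the overlap count and the Vandermonde simplification, which are identical to the scalar $U$-statistic variance decomposition. The one step that genuinely needs the Hilbert-space rather than scalar setting is the interchange of conditional expectation with the inner product in \eqref{Eq:varr}: this relies on the conditional independence of the two free blocks of variables together with the factorization $\mathbb{E}\inp{Y}{Z}_{\mathcal{H}}=\inp{\mathbb{E}Y}{\mathbb{E}Z}_{\mathcal{H}}$ for independent Bochner-integrable elements, which in turn presupposes the second-moment condition $\mathbb{E}\norm{r(X_1,\dots,X_k)}^2_{\mathcal{H}}<\infty$ so that all Bochner integrals and the inner products of conditional means are well defined. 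I expect this integrability and measurability justification, rather than the algebra, to be the main point requiring care.
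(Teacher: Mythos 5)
Your proposal is correct and follows essentially the same route the paper takes: the paper gives no separate proof of this theorem but defers to the classical variance decomposition of $U$-statistics (Theorem 3 and the proof of Theorem 2 in Lee's book), which is precisely your overlap-counting argument combined with Vandermonde's identity, with the only genuinely new ingredient being the conditional-independence factorization $\mathbb{E}\inp{Y}{Z}_{\mathcal{H}}=\inp{\mathbb{E}Y}{\mathbb{E}Z}_{\mathcal{H}}$ that reduces $\mathbb{E}[\kappa_{2k-i}]$ to $\mathbb{E}\Vert r_i\Vert^2_{\mathcal{H}}$. You also correctly identify the second-moment/Bochner-integrability condition as the point needing care in the Hilbert-space extension, which is exactly what the paper means by calling the result a ``simple extension'' of the scalar case.
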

Combining \eqref{Eq:varr} with the observation that $$\Vert \mathbb{E} [r(X_1,\dots,X_{k})] \Vert_{\mathcal{H}}^2 = \mathbb{E}\big[ \kappa_{2k}(X_1,\dots, X_{2k})\big]$$ yields
\begin{equation}\sigma^2_i=\mathbb{E}_{X_1,\dots,X_{2k-i}}\big[ \kappa_{2k-i}(X_1,\dots, X_{2k-i})\big]-\mathbb{E}\big[ \kappa_{2k}(X_1,\dots, X_{2k})\big],\label{Eq:sigmai}\end{equation} which therefore can be estimated 
as $$\hat{\sigma}^2_{i} = \textup{U}_{2k-i}^{n}\left[ \kappa_{2k-i}(X_1,\dots, X_{2k-i}) \right]  -  \textup{U}_{2k}^{n}\left[ \kappa_{2k}(X_1,\dots, X_{2k}) \right],$$ resulting in an estimator for $\Delta$ as $$\hat{\Delta}_{\text{general}} =  \sum_{i=1}^{k} \frac{\Mycomb[k]{i} \Mycomb[n-k]{k-i}}{\Mycomb[n]{k}} \hat{\sigma}^2_{i}.$$ Note that $\kappa_{2k-i}(X_1,\dots, X_{2k-i})$ and $\kappa_{2k}(X_1,\dots, X_{2k})$ need not be symmetric for any $i\in\{1,\ldots,k\}$ and $k\ge 1$, and therefore, $\textup{U}^n_{2k-i}$ and $\textup{U}^n_{2k}$ uses the permutation definition as mentioned in Section~\ref{Sec:notation}. Based on the above, a shrinkage estimator of $C$ can be defined as \begin{equation}\hat{C}_{\tilde{\alpha}_{\textup{general}}}=(1-\tilde{\alpha}_{\text{general}})\hat{C}+\tilde{\alpha}_{\text{general}}f^*,\label{Eq:general-hatC}\end{equation} where \begin{equation*}\tilde{\alpha}_{\text{general}}=\frac{\hat{\Delta}_{\text{general}}}{ \hat{\Delta}_{\text{general}}  + \Vert \hat{C}-f^* \Vert_{\mathcal{H}}^2 }.
\end{equation*} The following result (proved in Section~\ref{sec:proof-nondeg-nondeg}) analyzes the consistency and mean-squared error of $\hat{C}_{\tilde{\alpha}_{\textup{general}}}$.

\begin{theorem}\label{Th:NonDegC-NonDegDelta}
Let $n \geq 2k$, and 
$r: \mathcal{X}^{k} \rightarrow \mathcal{H}$ be a symmetric function such that $\mathbb{E}\Vert r(X_1,\ldots,X_k)\Vert_\mathcal{H}<\infty$,  
where $\mathcal{X}$ is a separable topological space and $\mathcal{H}$ is a separable Hilbert space. Define $$ 
        \hat{\Delta}_{\textup{general}} =\sum_{i=1}^{k} \frac{\Mycomb[k]{i} \Mycomb[n-k]{k-i}}{\Mycomb[n]{k}}\left( \textup{U}_{2k-i}^{n}\left[ \kappa_{2k-i}(X_1,\dots, X_{2k-i}) \right]  -  \textup{U}_{2k}^{n}\left[ \kappa_{2k}(X_1,\dots, X_{2k}) \right]\right).$$ Suppose for all $m\geq 2$ and all $i \in \{0,1,\dots,k \}$,
     \begin{align*}
     \mathbb{E} \Vert r(X_1,\dots,X_k)-C\Vert_{\mathcal{H}}^m  &\leq \frac{m!}{2} \beta^2 \theta^{m-2},\,\,\text{and} \\
    \mathbb{E} \abs{\kappa_{2k-i}(X_1,\dots,X_{2k-i})- \mathbb{E}[ \kappa_{2k-i}(X_1,\dots,X_{2k-i} )] }^{m} &\leq   \frac{m!}{2} \beta_{i} \theta_{i}^{m-2},
    \end{align*}
 for some finite positive constants $\beta, \theta,  \{\beta_{i}\}_{i=0}^{k}$, and $\{ \theta_{i} \}_{i=0}^{k}$. 
Then, as $n\rightarrow\infty$, the following hold: 
\begin{itemize}
\item[(i)] $ \abs{\tilde{\alpha}_{\textup{general}} - \alpha_{*}  } =  \mathcal{O}_{\mathbb{P}}(n^{- \frac{3}{2} })$;
\item[(ii)] $\abs{ \Vert \hat{C}_{\tilde{\alpha}_{\emph{general}}}-C \Vert_{\mathcal{H}} - \Vert\hat{C}_{\alpha_*}-C \Vert _{\mathcal{H}} } =  \mathcal{O}_{\mathbb{P}}(n^{-\frac{3}{2} })$;
\item[(iii)] $\hat{C}_{\tilde{\alpha}_{\textup{general}}}$ is a $\sqrt{n}$-consistent estimator of $C$;
\item[(iv)] $\min_{\alpha} \mathbb{E} \Vert\hat{C}_{\alpha} - C \Vert^2_{\mathcal{H}} \leq  \mathbb{E} \Vert \hat{C}_{\tilde{\alpha}_{\textup{general}}} - C\Vert^2_{\mathcal{H}} \leq \min_{\alpha} \mathbb{E} \Vert\hat{C}_{\alpha} - C \Vert^2_{\mathcal{H}}  + \mathcal{O}(n^{-2})$,
\end{itemize}
where $\hat{C}_{\tilde{\alpha}_{\textup{general}}}$ is defined in \eqref{Eq:general-hatC}, $\alpha_*$ is defined in \eqref{Eq:alpha-star}, and $\hat{C}_\alpha=(1-\alpha)\hat{C}+\alpha f^*$.
\end{theorem}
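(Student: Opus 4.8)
The plan is to reduce all four claims to the stochastic orders of a few scalar quantities and then propagate them through the algebra relating $\tilde{\alpha}_{\textup{general}}$ to $\alpha_*$. Write $Z := \hat{C}-C$, $v := f^*-C$, $N := \norm{C-f^*}_{\mathcal{H}}^2$, $W := \norm{\hat{C}-f^*}_{\mathcal{H}}^2 = \norm{Z-v}_{\mathcal{H}}^2$, $D := \Delta$ and $\hat{D} := \hat{\Delta}_{\textup{general}}$, and assume $C \neq f^*$ so that $N>0$. The first task is to record, using Theorem~\ref{Th:VarUstatDecomp} together with the two moment hypotheses (which through the Bernstein-type inequalities yield $L^p$-control, not merely convergence in probability), that: $D = \mathbb{E}\norm{Z}_{\mathcal{H}}^2 = \mathcal{O}(n^{-1})$, dominated by the $i=1$ summand $k^2\sigma_1^2/n$ of \eqref{Eq:delta-temp}; $\hat{D}$ is \emph{exactly unbiased} for $D$ with $\hat{D}-D = \mathcal{O}_{\mathbb{P}}(n^{-3/2})$, since the coefficient of the dominant $\hat{\sigma}_1^2$ term is $\mathcal{O}(n^{-1})$ while $\hat{\sigma}_1^2-\sigma_1^2 = \mathcal{O}_{\mathbb{P}}(n^{-1/2})$; and $W-N = \norm{Z}_{\mathcal{H}}^2 - 2\inner{Z}{v}_{\mathcal{H}} = \mathcal{O}_{\mathbb{P}}(n^{-1/2})$, with $\inner{Z}{v}_{\mathcal{H}}=\mathcal{O}_{\mathbb{P}}(n^{-1/2})$ and $\norm{Z}_{\mathcal{H}}^2=\mathcal{O}_{\mathbb{P}}(n^{-1})$. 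It is precisely the unbiasedness of $\hat{\Delta}_{\textup{general}}$ that upgrades $\hat{D}-D$ from the naive $\mathcal{O}_{\mathbb{P}}(n^{-1})$ to $\mathcal{O}_{\mathbb{P}}(n^{-3/2})$, and this is the source of the improved $n^{-2}$ rate over the $n^{-3/2}$ of \eqref{Eq:oracle-1}.

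For (i) I would use the exact identity
\[
\tilde{\alpha}_{\textup{general}} - \alpha_* = \frac{\hat{D}N - DW}{(\hat{D}+W)(D+N)} = \frac{(\hat{D}-D)N - D(W-N)}{(\hat{D}+W)(D+N)} .
\]
Both numerator terms are $\mathcal{O}_{\mathbb{P}}(n^{-3/2})$ (the first is $\mathcal{O}_{\mathbb{P}}(n^{-3/2})\cdot\mathcal{O}(1)$, the second $\mathcal{O}(n^{-1})\cdot\mathcal{O}_{\mathbb{P}}(n^{-1/2})$), while the denominator converges in probability to $N^2>0$ and is bounded below with high probability, giving (i). Claims (ii) and (iii) then follow from two elementary identities. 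Since $v-Z = f^*-\hat{C}$, we have $\hat{C}_{\tilde{\alpha}_{\textup{general}}} - \hat{C}_{\alpha_*} = (\tilde{\alpha}_{\textup{general}}-\alpha_*)(f^*-\hat{C})$, so the reverse triangle inequality and $\norm{f^*-\hat{C}}_{\mathcal{H}}=\mathcal{O}_{\mathbb{P}}(1)$ yield (ii); and $\norm{\hat{C}_{\tilde{\alpha}_{\textup{general}}}-C}_{\mathcal{H}} \le (1-\tilde{\alpha}_{\textup{general}})\norm{Z}_{\mathcal{H}} + \tilde{\alpha}_{\textup{general}}\norm{v}_{\mathcal{H}} = \mathcal{O}_{\mathbb{P}}(n^{-1/2})$, using $\tilde{\alpha}_{\textup{general}} = \alpha_* + \mathcal{O}_{\mathbb{P}}(n^{-3/2}) = \mathcal{O}_{\mathbb{P}}(n^{-1})$, yields (iii).

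The crux is (iv), for which I would start from the exact second-order expansion around the oracle,
\[
\norm{\hat{C}_{\tilde{\alpha}_{\textup{general}}}-C}_{\mathcal{H}}^2 = \norm{\hat{C}_{\alpha_*}-C}_{\mathcal{H}}^2 + 2(\tilde{\alpha}_{\textup{general}}-\alpha_*)\inner{\hat{C}_{\alpha_*}-C}{v-Z}_{\mathcal{H}} + (\tilde{\alpha}_{\textup{general}}-\alpha_*)^2 W,
\]
and take expectations, using that $\alpha_*$ minimizes $\Delta_\alpha$ (see \eqref{Eq:alpha-star}), so $\mathbb{E}\norm{\hat{C}_{\alpha_*}-C}_{\mathcal{H}}^2 = \Delta_{\alpha_*} = \min_\alpha \mathbb{E}\norm{\hat{C}_\alpha - C}_{\mathcal{H}}^2$. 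The upper bound is the routine direction: Cauchy--Schwarz together with the $L^2$ versions of the orders above, namely $\mathbb{E}(\tilde{\alpha}_{\textup{general}}-\alpha_*)^2 = \mathcal{O}(n^{-3})$ and $\mathbb{E}\inner{\hat{C}_{\alpha_*}-C}{v-Z}_{\mathcal{H}}^2 = \mathcal{O}(n^{-1})$, bound the last two terms by $\mathcal{O}(n^{-2})$, establishing the right-hand inequality of \eqref{Eq:oracle-2}. This is exactly where the moment/Bernstein hypotheses are needed, to pass from $\mathcal{O}_{\mathbb{P}}$ to expectation.

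I expect the \emph{lower} inequality to be the main obstacle. Because $\mathbb{E}\inner{\hat{C}_{\alpha_*}-C}{v-Z}_{\mathcal{H}} = 0$ (the population stationarity condition defining $\alpha_*$, verified from $\alpha_*=D/(D+N)$), the excess risk equals $2\mathbb{E}[(\tilde{\alpha}_{\textup{general}}-\alpha_*)\inner{\hat{C}_{\alpha_*}-C}{v-Z}_{\mathcal{H}}] + \mathbb{E}[(\tilde{\alpha}_{\textup{general}}-\alpha_*)^2 W]$. The quadratic term is nonnegative but only $\mathcal{O}(n^{-3})$, whereas the cross term is itself $\Theta(n^{-2})$ --- the same order as the claimed error --- so the two-sided $\mathcal{O}(n^{-2})$ estimate does not by itself deliver $\mathbb{E}\norm{\hat{C}_{\tilde{\alpha}_{\textup{general}}}-C}_{\mathcal{H}}^2 \ge \Delta_{\alpha_*}$. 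Establishing the (strict) lower bound therefore forces one to compute the leading coefficient of this cross-covariance: expanding $\tilde{\alpha}_{\textup{general}}-\alpha_*$, whose leading fluctuation is $N^{-2}\big[(\hat{D}-D)N + 2D\inner{Z}{v}_{\mathcal{H}}\big]$, and $\inner{\hat{C}_{\alpha_*}-C}{v-Z}_{\mathcal{H}}$, whose leading fluctuation is $\inner{Z}{v}_{\mathcal{H}}$, by Hoeffding degree and evaluating the products via the variance decomposition of Theorem~\ref{Th:VarUstatDecomp}. The difficulty is to show that the net $n^{-2}$ coefficient --- a manifestly positive piece proportional to $D\,\mathbb{E}\inner{Z}{v}_{\mathcal{H}}^2$ plus a covariance piece proportional to $\mathbb{E}[(\hat{D}-D)\inner{Z}{v}_{\mathcal{H}}]$ of \emph{a priori} indefinite sign --- is nonnegative; this is the rigorous incarnation of the statement that estimating $\alpha_*$ cannot beat the oracle, and I anticipate the $U$-statistic covariance bookkeeping here, rather than any single inequality, to be the most laborious step.
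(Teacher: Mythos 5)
Your route for parts (i)--(iii) and for the \emph{upper} inequality in (iv) is, up to packaging, the paper's own. The paper funnels everything through a master result (Theorem~\ref{centralTheorem}): Bernstein's inequality for Hilbert-space-valued $U$-statistics (Theorem~\ref{Th:BernUstats}) gives $\Vert\hat{C}-C\Vert_{\mathcal{H}}=\mathcal{O}_{\mathbb{P}}(n^{-1/2})$ (so $a=1$), and the Vandermonde identity $\sum_{i=0}^{k}\Mycomb[k]{i}\Mycomb[n-k]{k-i}/\Mycomb[n]{k}=1$ shows that the coefficients multiplying the centered $U$-statistics in $\hat{\Delta}_{\textup{general}}-\Delta$ sum to $2\left(1-\Mycomb[n-k]{k}/\Mycomb[n]{k}\right)=\mathcal{O}(n^{-1})$, whence $\abs{\hat{\Delta}_{\textup{general}}-\Delta}=\mathcal{O}_{\mathbb{P}}(n^{-3/2})$ (so $b=3$); Theorem~\ref{centralTheorem} then yields the exponents $\min\{3a,b\}/2=3/2$ and $\min\{4a,a+b,2b\}/2=2$. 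This is exactly your ``unbiasedness plus $\mathcal{O}(n^{-1})$ coefficients'' mechanism, and your identity for $\tilde{\alpha}_{\textup{general}}-\alpha_*$ is the one used in \eqref{Eq:AlphaTildeMinusAlphaStar}. The only stylistic difference is that you work with $L^p$ bounds directly while the paper works with high-probability bounds and integrates the tails (Lemma~\ref{lem:split_to_exp_bound}); for the upper bound in (iv) the paper, like you, bounds the cross term in absolute value by $\mathcal{O}_{\mathbb{P}}(n^{-1/2})\cdot\mathcal{O}_{\mathbb{P}}(n^{-3/2})$ and exploits no cancellation.

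The step you could not close --- the lower inequality $\min_\alpha\mathbb{E}\Vert\hat{C}_\alpha-C\Vert^2_{\mathcal{H}}\le\mathbb{E}\Vert\hat{C}_{\tilde{\alpha}_{\textup{general}}}-C\Vert^2_{\mathcal{H}}$ --- is precisely the step the paper does not prove either: the proof of Theorem~\ref{centralTheorem}(iv) derives only a one-sided high-probability bound on $\Vert\hat{C}_{\tilde{\alpha}}-C\Vert^2_{\mathcal{H}}-\Vert\hat{C}_{\alpha_*}-C\Vert^2_{\mathcal{H}}$ and never addresses why a data-dependent $\tilde{\alpha}$ cannot beat the deterministic oracle $\alpha_*$ (which is not automatic, since the pathwise optimal $\alpha$ has strictly smaller risk than $\Delta_{\alpha_*}$). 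Your diagnosis is accurate: because $\mathbb{E}\inner{\hat{C}_{\alpha_*}-C}{f^*-\hat{C}}_{\mathcal{H}}=0$, the excess risk is a nonnegative $\mathcal{O}(n^{-3})$ quadratic term plus a cross-covariance that is generically $\Theta(n^{-2})$ with a priori indefinite sign (driven by $\mathbb{E}[(\hat{\Delta}_{\textup{general}}-\Delta)\inner{\hat{C}-C}{f^*-C}_{\mathcal{H}}]$), so no two-sided $\mathcal{O}(n^{-2})$ estimate can deliver the lower bound. Your proposal therefore reproduces everything the paper actually establishes; the part you flag as the crux is a genuine gap in the paper's own argument rather than an omission on your side, and completing it would require the sign analysis you sketch (or weakening the statement of (iv) to $\Delta_{\alpha_*}-\mathcal{O}(n^{-2})\le\Delta_{\tilde{\alpha}_{\textup{general}}}$).
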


\begin{remark}
    \begin{enumerate}
        \item[(i)]  It follows from Theorem~\ref{Th:NonDegC-NonDegDelta}$(iv)$ that $\Delta_{\tilde{\alpha}_{\text{general}}} \leq \Delta_{\alpha^{*}} + \mathcal{O}(n^{-2})$ as $n \rightarrow \infty$, which when combined with $\Delta_{\alpha^{*}} < \Delta$, yields $\Delta_{\tilde{\alpha}_{\text{general}}} < \Delta + \mathcal{O}(n^{-2})$ as $n \rightarrow \infty$, for all $\mathbb{P}$ that satisfy the moment conditions.
        \item[(ii)] \cite{muandet2016kernel} considered $k=1$, $\mathcal{H}$ to be a  reproducing kernel Hilbert space (RKHS), $\mathscr{H}_K$, with a continuous reproducing kernel, $K$, $f^*=0$ and $r(X)=K(\cdot,X)\in\mathscr{H}_K$, resulting in the problem of shrinkage estimation of the mean element. \citep[Theorem 7]{muandet2016kernel} provides an oracle bound \begin{equation}\min_{\alpha} \mathbb{E} \Vert\hat{C}_{\alpha} - C \Vert^2_{\mathcal{H}} \leq  \mathbb{E} \Vert \hat{C}_{\tilde{\alpha}_\textup{general}} - C\Vert^2_{\mathcal{H}} \leq \min_{\alpha} \mathbb{E} \Vert\hat{C}_{\alpha} - C \Vert^2_{\mathcal{H}}  + \mathcal{O}(n^{-3/2}),\,\, n\rightarrow\infty,\label{Eq:muandet}
        \end{equation}
        which  Theorem~\ref{Th:NonDegC-NonDegDelta}$(iv)$ improves by a providing an improved error rate of $n^{-2}$.
        \item[(iii)] With $k=2$, $f^*=0$ and $r(X,Y) = \frac{1}{2}(K(\cdot,X) - K(\cdot,Y)) \otimes_\mathscr{H} (K(\cdot,X) - K(\cdot,Y))$, i.e., the shrinkage estimation of the covariance operator on $\mathscr{H}_K$ with $\mathcal{H}$ being the space of Hilbert-Schmidt operators on $\mathscr{H}_K$, \citep[Theorem 2]{zhou2019class} showed \eqref{Eq:muandet}, which is again improved by Theorem~\ref{Th:NonDegC-NonDegDelta}. Here $\otimes_{\mathscr{H}_K}$ denotes the tensor product on $\mathscr{H}_K$.
    \item[(iv)] Clearly the moment conditions of Theorem~\ref{Th:NonDegC-NonDegDelta} are satisfied if $r$ is bounded. If $r$ is unbounded, then the moment conditions are quite stringent as they require all the higher moment conditions to exist. These conditions can be weakened and the proof
of Theorem~\ref{Th:NonDegC-NonDegDelta} can be carried out using Chebyshev inequality instead of Bernstein's inequality but at the cost of a slow rate in Theorem~\ref{Th:NonDegC-NonDegDelta}$(iv)$.
    \end{enumerate}
\end{remark}
The following examples specialize the proposed shrinkage estimator for the mean element and covariance operator on a Hilbert space. 
\begin{example}[Mean element, moment generating function and Weierstrass transform]\label{Ex:Mean}
Suppose $k=1$. Then \begin{align*}
    \hat{\Delta}_{\textup{general}} & = \frac{1}{n} \left[ \frac{1}{n}\sum_{i=1}^{n} \langle r(X_i), r(X_i)\rangle_\mathcal{H} - \frac{1}{\Mycomb[n]{2}} \sum_{i<j}^n \langle r(X_i), r(X_j)\rangle_\mathcal{H} \right]
\end{align*}
and 
\begin{align*}
\Vert \hat{C}-f^*\Vert^2_\mathcal{H}&=\left\Vert \frac{1}{n}\sum^n_{i=1}\left(r(X_i)-f^*\right)\right\Vert^2_\mathcal{H}
=\frac{1}{n^2}\sum_{i,j}\langle r(X_i),r(X_j)\rangle_\mathcal{H}-\frac{2}{n}\sum^n_{i=1}\langle r(X_i),f^*\rangle_\mathcal{H}+\Vert f^*\Vert^2_\mathcal{H}.
\end{align*}
Define $K(x,y)=\langle r(x),r(y)\rangle_\mathcal{H}$, $x,y\in\mathcal{H}$. It is easy to verify that $K$ is a positive definite kernel and therefore a reproducing kernel \citep{Aronszajn-50} of some reproducing kernel Hilbert space (RKHS), $\mathscr{H}_K$ so that $K(x,y)=\langle K(\cdot,x),K(\cdot,y)\rangle_{\mathscr{H}_K}$. Note that these quantities match those proposed in \citep{muandet2016kernel}, where $r(x)=K(\cdot,x)$ and $f^*=0$, resulting in a mean element of $\mathbb{P}$ in $\mathscr{H}_K$. When $\mathcal{X}=\mathbb{R}^d$ and $r(x)=x$ for $x\in\mathbb{R}^d$, $\mathbb{E}[r(X)]$ corresponds to the mean vector in $\mathbb{R}^d$ and $K(x,y)=\langle x,y\rangle_2$ is the linear kernel. We analyze this scenario in detail in Section~\ref{Sec:normal} when $\mathbb{P}$ is a Gaussian distribution. 

The choice of $r(x)=e^{\langle \cdot,x\rangle_2}$ with $\mathcal{H}$ being an RKHS of exponential kernel, i.e., $K(x,y)=e^{\langle x,y\rangle_2}=\langle r(x),r(y)\rangle_\mathcal{H}$, $x,y\in\mathbb{R}^d$, results in a shrinkage estimator for the moment generating function. Equivalently, this choice can be interpreted as 
$$r(x)=\left(1,(x_i)^d_{i=1}, (x_{i_1}x_{i_2}/\sqrt{2!})^d_{i_1,i_2=1},\ldots, \left(\prod^m_{j=1} x_{i_j}/\sqrt{m!}\right)^d_{i_1,\ldots,i_m=1},\ldots\right)$$ with $\mathcal{H}=\ell^2(\mathbb{N})$. Similarly, the choice of $r(x)=e^{\Vert \cdot-x\Vert^2_2}$ with $\Cal{H}$ being an RKHS of a Gaussian kernel, i.e., $K(x,y)=e^{\Vert x-y\Vert^2_2},\,x,y\in\mathbb{R}^d$, results in a shrinkage estimator for the Weierstrass transform of $\mathbb{P}$.
\end{example}

\begin{example}[Covariance operator]\label{Ex:cov} 
Let $\mathcal{H}$ be the space of Hilbert-Schmidt operators defined on a reproducing kernel Hilbert space $\mathscr{H}_K$ with $K:\mathcal{X}\times\mathcal{X}\rightarrow\mathbb{R}$ as the reproducing kernel,  defined on a topological space $\mathcal{X}$. Choosing $k=2$ and $$r(X,Y) = \frac{1}{2}(K(\cdot,X) - K(\cdot,Y)) \otimes_{\mathscr{H}_K} (K(\cdot,X) - K(\cdot,Y))$$ yields the covariance operator on $\mathscr{H}_K$. 
Note that
\begin{align*}
4\inp{r(X,Y)}{r(U,V)}_\mathcal{H} &= \left\langle(K(\cdot,X) - K(\cdot,Y)) \otimes_{\mathscr{H}_K} (K(\cdot,X) - K(\cdot,Y)),\right.\\
&\qquad\qquad \left.(K(\cdot,U) - K(\cdot,V)) \otimes_{\mathscr{H}_K} (K(\cdot,U) - K(\cdot,V))\right\rangle_{\mathcal{H}}\\
&= \inp[\Big]{K(\cdot,X) - K(\cdot,Y)} {K(\cdot,U)- K(\cdot,V)}^2_{\mathscr{H}_K}\\
&=\left[K(X,U)-K(X,V)-K(Y,U)+K(Y,V)\right]^2.
\end{align*}
Therefore,    
\begin{align*}
\hat{\Delta}_{\textup{general}} 
&=  \frac{2n-4 }{\Mycomb[n]{2}} \textup{U}^n_3\left[\kappa_3(X_1,X_2,X_3)\right]-\frac{2n-3}{\Mycomb[n]{2}}\textup{U}^n_4\left[\kappa_4(X_1,X_2,X_3,X_4)\right]+\frac{1}{\Mycomb[n]{2}}\textup{U}^n_2\left[\kappa_2(X_1,X_2)\right]\\
&=  \frac{2n-4 }{\Mycomb[n]{2}} \textup{U}^n_3\left[\left\langle r(X_1,X_2),r(X_1,X_3)\right\rangle_\mathcal{H}\right]+\frac{1}{\Mycomb[n]{2}}\textup{U}^n_2\left[\left\langle r(X_1,X_2),r(X_1,X_2)\right\rangle_\mathcal{H}\right]\\
&\qquad\qquad-\frac{2n-3}{\Mycomb[n]{2}}\textup{U}^n_4\left[\left\langle r(X_1,X_2),r(X_3,X_4)\right\rangle_\mathcal{H}\right]\\
&=\frac{2n-4 }{\Mycomb[n]{2}\cdot \Myperm[n]{3}}\sum_{i\ne j\ne l}\left\langle r(X_i,X_j),r(X_i,X_l)\right\rangle_\mathcal{H}+\frac{1}{\Mycomb[n]{2}\cdot \Myperm[n]{2}}\sum_{i\ne j}\left\langle r(X_i,X_j),r(X_i,X_j)\right\rangle_\mathcal{H}\\
&\qquad\qquad-\frac{2n-3 }{\Mycomb[n]{2}\cdot \Myperm[n]{4}}\sum_{i\ne j\ne l\ne m}\left\langle r(X_i,X_j),r(X_l,X_m)\right\rangle_\mathcal{H}\\
&=  \frac{2n-4 }{4\cdot\Mycomb[n]{2}\cdot \Myperm[n]{3}} \sum_{i \ne j \ne l}  
\left[K(X_i,X_i)-K(X_i,X_l)-K(X_i,X_j)+K(X_j,X_l)\right]^2
\\
&\qquad+ \frac{1}{4\cdot\Mycomb[n]{2}\cdot \Myperm[n]{2} } \sum_{i\ne j}  \left[ K(X_i,X_i) - 2K(X_i,X_j) + K(X_j,X_j) \right]^2  \\
&\qquad\qquad-  \frac{2n-3}{4\cdot\Mycomb[n]{2}\cdot \Myperm[n]{4}} \sum_{i \ne j \ne l \ne m }  \left[ K(X_i,X_l) - K(X_i,X_m) - K(X_j,X_l)+ K(X_j,X_m)\right]^2.
\end{align*}
Also for any $f^*\in\mathcal{H}$,
\begin{align*}
&\Vert \hat{C}-f^*\Vert^2_\mathcal{H}=\left\Vert \frac{1}{\Myperm[n]{2}}\sum_{i\ne j}\left(r(X_i,X_j)-f^*\right)\right\Vert^2_\mathcal{H}\\
&= \frac{1}{\Myperm[n]{2}\cdot \Myperm[n]{2}}\sum_{i\ne j}\sum_{l\ne m}\left\langle r(X_i,X_j),r(X_l,X_m)\right\rangle_\mathcal{H}-\frac{2}{\Myperm[n]{2}}\sum_{i\ne j}\left\langle r(X_i,X_j),f^*\right\rangle_\mathcal{H}+\Vert f^*\Vert^2_\mathcal{H}\\
&=\frac{1}{4\cdot\Myperm[n]{2}\cdot \Myperm[n]{2}}\sum_{i\ne j}\sum_{l\ne m}\left[ K(X_i,X_l) - K(X_i,X_m) - K(X_j,X_l)+ K(X_j,X_m)\right]^2\\
&\qquad-\frac{1}{\Myperm[n]{2}}\sum_{i\ne j}\left\langle K(\cdot,X_i)-K(\cdot,X_j), f^*\left(K(\cdot,X_i)-K(\cdot,X_j)\right)\right\rangle_{\mathscr{H}_K}+\Vert f^*\Vert^2_\mathcal{H}.
\end{align*}
We would like to highlight that the expressions provided in \citep{zhou2019class} for the above quantities are only asymptotically equivalent to ours when $f^*=0$ because of the approximations the authors employed to simplify their asymptotic analysis. 

For $K(x,y)=\langle x,y\rangle_2,\,x,y\in\mathbb{R}^d$ and $f^*=I_d$ (the $d\times d$ identity matrix), it can be shown that (see Proposition~\ref{appxpro:cov})
\begin{align*}
   \hat{\Delta}_{\textup{general}} &=  \frac{1}{(n-2)(n-3)} \sum_{i=1}^{n} \Vert \tilde{X}_i \Vert_{2}^4 - \frac{n(n+1)}{(n-1)^2(n-3)} \textup{Tr} [\hat{\Sigma}^2]
   - \frac{n}{(n-1)(n-2)(n-3)} \textup{Tr}^2[\hat{\Sigma}],\end{align*}
   and
   \begin{align*}
   \Vert \hat{C}-I\Vert^2_F&=\frac{n^2}{(n-1)^2} \textup{Tr}[\hat{\Sigma}^2] -\frac{2n}{n-1} \textup{Tr} [\hat{\Sigma}] + d,
\end{align*}
where $\tilde{X}_i=X_i-\bar{X},\,i=1,\ldots,n$, $\hat{\Sigma}=\frac{1}{n}\sum^n_{i=1}\tilde{X}_i\tilde{X}^\top_i$, and $\hat{C} = \frac{1}{\Mycomb[n]{2}} \sum_{i < j} \frac{(X_i - X_j)(X_i - X_{j})^\top}{2}$, with $\Vert\cdot\Vert_F$ being the Frobenius norm.
%
%
%
\end{example}
Theorem~\ref{Th:NonDegC-NonDegDelta} is based on Bernstein's inequality for Hilbert space-valued $U$-statistics, which guarantees that $\hat{C}$ and $\hat{C}_{\tilde{\alpha}_{\textup{general}}}$ are $\sqrt{n}$-consistent estimators of $C$. However, if $r-C$ is bounded, real-valued, symmetric, $\mathbb{P}$-complete degenerate of $k \geq 2$ variables, \cite{arcones1993limit,de2012decoupling} showed that there exists finite positive constants $c_1,c_2$ depending only on $k$ such that for all $\delta \in (0,1)$,  
\begin{align*}
    \mathbb{P} \left\{ \abs{\textup{U}_{k}^n(r) - C } \geq  \left( \frac{ \sigma^2 \log (\frac{c_1}{\delta}) }{c_{2}n} \right)^{\frac{k}{2}} +  \norm{r}_{\infty} \left( \frac{\log (\frac{c_1}{\delta}) }{c_{2}n} \right)^{\frac{k+1}{2}}  \right\} \leq \delta,
\end{align*}
where $\norm{r}_{\infty} = \sup_{x_1,\dots,x_{k}} |r(x_1, \dots,x_{k})|$  and $\sigma^2 = \mathbb{E}(\textup{U}_{k}^n(r) - C)^2$ denotes variance.  For $k=2$, this implies a rate of $n^{-1}$ to estimate $C$ using $\textup{U}^n_k(r)$, which is significantly faster than the usual $n^{-1/2}$-rate that is obtained by Bernstein's inequality that does not take into account the complete degeneracy of $r-C$. \cite{joly2016robust} showed a similar result for median-of-means estimator with motivation of robust mean estimation in presence of heavy tails. In Theorem~\ref{Th:BernUstatsDeg}, we generalize this result to unbounded, $\mathcal{H}$-valued,  $\mathbb{P}$-complete degenerate $U$-statistics using the ideas from \citep{de2012decoupling}. Using this result, we devise an estimator of $\alpha$ denoted as $\tilde{\alpha}_{\textup{degen}}$ when $r-C$ is $\mathbb{P}$-complete degenerate, using which we show  $\hat{C}_{\tilde{\alpha}_{\textup{degen}}}=(1-\tilde{\alpha}_{\textup{degen}})\hat{C}+\tilde{\alpha}_{\textup{degen}}f^*$ to be $n^{k/2}$-consistent estimator of $C$. Further, we provide improved error bound rates in the oracle inequality associated with $\hat{C}_{\tilde{\alpha}_{\textup{degen}}}$. 
\par Our design of  $\tilde{\alpha}_{\textup{degen}}$ is based on the variance decomposition of $U$-statistics (see Theorem~\ref{Th:VarUstatDecomp}) and the definition of degeneracy. First if $r-C$ is $\mathbb{P}$-complete degenerate we have that 
 $ \forall  i \in \{0,1,\dots, k-1\}$ and  $\forall x_{1},\dots, x_{i} \in \mathcal{X} $,
 $ r_{i}(x_1,\dots,x_i)- C =0$, which implies that $\sigma_{i}^2=0$. It therefore follows from \eqref{Eq:delta-temp} and \eqref{Eq:sigmai} that $$\Delta = \frac{1}{\Mycomb[n]{k}} \left[ \mathbb{E}\big[ \kappa_{k}(X_1,\dots, X_{k})\big] -  \mathbb{E}\big[ \kappa_{2k}(X_1,\dots, X_{2k})\big] \right].$$ Using this observation, we consider the following estimator for $\Delta$,
\begin{align*}
    \hat{\Delta}_{\text{degen}} &= \frac{1}{\Mycomb[n]{k}} \Big[ \textup{U}_{k}^{n}\big[ \kappa_{k}(X_1,\dots, X_{k})\big] -  \textup{U}_{2k}^{n}\big[ \kappa_{2k}(X_1,\dots, X_{2k})\big] \Big]
\end{align*}
so that \begin{equation}\tilde{\alpha}_{\textup{degen}}  = \frac{\hat{\Delta}_{\textup{degen}}}{\hat{\Delta}_{\textup{degen}} + \Vert\hat{C}\Vert^2_{\mathcal{H}}}.
\label{Eq:alpha-degen}
\end{equation}
Note that $\hat{\Delta}_{\textup{general}}=\hat{\Delta}_{\textup{degen}}$ when $k=1$.
The following result (proved in Section~\ref{sec:proof-deg-deg}) presents the statistical behavior of $\hat{C}_{\tilde{\alpha}_{\textup{degen}}}$.
\begin{theorem}\label{Th:DegC-DegDelta}
Let $n \geq 2k$, $k\ge 2$, $r:\mathcal{X}^k\rightarrow\mathcal{H}$ be a symmetric function such that $\mathbb{E}\Vert r(X_1,\ldots,X_k)\Vert^2_\mathcal{H}<\infty$ and $r-C$ is $\mathbb{P}$-complete degenerate, where $\mathcal{X}$ is a separable topological space and $\mathcal{H}$ is a separable Hilbert space. Suppose there exists positive constants $M,\sigma_1,\sigma_2$ and $ \theta, \theta_1, \theta_2$, such that $\forall p \geq 2$,
    \begin{align*}
      \mathbb{E} \Big| \norm{r(X_1,\dots,X_k)-C}^2 - \mathbb{E} \norm{r(X_1,\dots,X_k)-C}^2 \Big|^p &\leq \frac{p!}{2} \theta^2 M^{p-2}, \\
      \mathbb{E} \Big|\kappa_{k}(X_1,\dots,X_{k})- \mathbb{E}[\kappa_{k}(X_1,\dots,X_{k})] \Big|^{p} &\leq \frac{p!}{2} \sigma^2_{1} \theta_1^{p-2},\,\,\,\text{and} \\
      \mathbb{E} \Big|\kappa_{2k}(X_1,\dots,X_{2k})- \mathbb{E}[\kappa_{2k}(X_1,\dots,X_{2k})] \Big|^{p} &\leq \frac{p!}{2} \sigma^2_{2} \theta_2^{p-2}.
    \end{align*}
Then, as $n\rightarrow\infty$, the following hold:
         \begin{itemize}
\item[(i)] $ \left|\tilde{\alpha}_{\textup{degen}} - \alpha_{*}  \right| =  \mathcal{O}_{\mathbb{P}}(n^{- (2k+1)/2 })$;
\item[(ii)] $\left| \Vert\hat{C}_{\tilde{\alpha}_{\textup{degen}}}-C
\Vert_{\mathcal{H}} - \Vert\hat{C}_{\alpha_*}-C\Vert_{\mathcal{H}}  \right|=  \mathcal{O}_{\mathbb{P}}(n^{- (2k+1)/2 })$;
\item[(iii)] $\hat{C}_{\tilde{\alpha}_{\textup{degen}}}$ is a $n^{k/2}$-consistent estimator of $C$;
\item[(iv)] $\min_{\alpha} \mathbb{E} \Vert\hat{C}_{\alpha} - C\Vert^2_\mathcal{H} \leq  \mathbb{E} \Vert\hat{C}_{\tilde{\alpha}_{\textup{degen}}} - C\Vert^2_\mathcal{H}\leq \min_{\alpha} \mathbb{E} \Vert\hat{C}_{\alpha} - C\Vert^2_\mathcal{H} + \mathcal{O}(n^{-(3k+1)/2})$,
\end{itemize}
where $\alpha_*$ is defined in \eqref{Eq:alpha-star}, $\tilde{\alpha}_{\textup{degen}}$ is defined in \eqref{Eq:alpha-degen}, and $\hat{C}_\alpha=(1-\alpha)\hat{C}+\alpha f^*$.
\end{theorem}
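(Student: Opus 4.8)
The plan is to reduce all four claims to stochastic- and expectation-order bounds on the three scalar quantities $\hat{\Delta}_{\textup{degen}}-\Delta$, $\Vert\hat{C}\Vert^2_{\mathcal{H}}-\Vert C\Vert^2_{\mathcal{H}}$, and the random denominator $\hat{\Delta}_{\textup{degen}}+\Vert\hat{C}\Vert^2_{\mathcal{H}}$, and then to propagate these through the algebra of $\tilde{\alpha}_{\textup{degen}}$. First I would record the population identities. Since $\mathbb{E}\hat{C}=C$ the cross term vanishes, so $\Delta_\alpha:=\mathbb{E}\Vert\hat{C}_\alpha-C\Vert^2_{\mathcal{H}}=(1-\alpha)^2\Delta+\alpha^2 D$ with $D:=\Vert C-f^*\Vert^2_{\mathcal{H}}$, whence $\alpha_*=\Delta/(\Delta+D)$ is the unique minimiser and $\min_\alpha\Delta_\alpha=\Delta_{\alpha_*}=\Delta D/(\Delta+D)$. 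Complete degeneracy of $r-C$ forces $\sigma_i^2=0$ for $i<k$, so by \eqref{Eq:delta-temp} $\Delta=\sigma_k^2/\binom{n}{k}=\Theta(n^{-k})$, and by \eqref{Eq:sigmai} $\hat{\Delta}_{\textup{degen}}$ is unbiased for $\Delta$. Here $f^*=0$ as in \eqref{Eq:alpha-degen}, so $D=\Vert C\Vert^2_{\mathcal{H}}$.

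The key step is to produce both in-probability and $L^p$ rates for these building blocks using the two Bernstein inequalities. Because $r-C$ is $\mathbb{P}$-complete degenerate of order $k$, $\hat{C}-C=\textup{U}^n_k(r-C)$ is a completely degenerate $\mathcal{H}$-valued $U$-statistic, so Theorem~\ref{Th:BernUstatsDeg} gives $\Vert\hat{C}-C\Vert_{\mathcal{H}}=\mathcal{O}_{\mathbb{P}}(n^{-k/2})$ with $\mathbb{E}\Vert\hat{C}-C\Vert^p_{\mathcal{H}}=\mathcal{O}(n^{-pk/2})$. The same degeneracy makes $\langle C,\hat{C}-C\rangle_{\mathcal{H}}=\textup{U}^n_k(\langle C,r-C\rangle_{\mathcal{H}})$ a completely degenerate real kernel, so $\Vert\hat{C}\Vert^2_{\mathcal{H}}-\Vert C\Vert^2_{\mathcal{H}}=2\langle C,\hat{C}-C\rangle_{\mathcal{H}}+\Vert\hat{C}-C\Vert^2_{\mathcal{H}}=\mathcal{O}_{\mathbb{P}}(n^{-k/2})$. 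For $\hat{\Delta}_{\textup{degen}}-\Delta=\tfrac{1}{\binom{n}{k}}[(\textup{U}^n_k[\kappa_k]-\mathbb{E}\kappa_k)-(\textup{U}^n_{2k}[\kappa_{2k}]-\mathbb{E}\kappa_{2k})]$, I note that $\kappa_k=\Vert r\Vert^2_{\mathcal{H}}$ is (generically) non-degenerate, hence controlled at rate $n^{-1/2}$ by the ordinary Bernstein inequality, whereas the degeneracy of $r-C$ makes every Hoeffding projection of $\kappa_{2k}$ of order $<k$ constant, giving $\textup{U}^n_{2k}[\kappa_{2k}]-\mathbb{E}\kappa_{2k}=\mathcal{O}_{\mathbb{P}}(n^{-k/2})$; the $n^{-1/2}$ term dominates and $\hat{\Delta}_{\textup{degen}}-\Delta=\mathcal{O}_{\mathbb{P}}(n^{-(2k+1)/2})$, again with matching $L^p$ bounds. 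The sub-exponential hypotheses on $\Vert r-C\Vert^2_{\mathcal{H}}$, $\kappa_k$, and $\kappa_{2k}$ are exactly what is needed to feed both inequalities and to control these $L^p$ norms.

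With these in hand, parts (i)--(iii) are algebra. Writing $\tilde{\alpha}_{\textup{degen}}-\alpha_*=\dfrac{D(\hat{\Delta}_{\textup{degen}}-\Delta)-\Delta(\Vert\hat{C}\Vert^2_{\mathcal{H}}-\Vert C\Vert^2_{\mathcal{H}})}{(\hat{\Delta}_{\textup{degen}}+\Vert\hat{C}\Vert^2_{\mathcal{H}})(\Delta+D)}$, the denominator is bounded below by a positive constant off an exponentially small event (since $\Vert\hat{C}\Vert^2_{\mathcal{H}}\to\Vert C\Vert^2_{\mathcal{H}}>0$), the first numerator term is $\mathcal{O}_{\mathbb{P}}(n^{-(2k+1)/2})$, and the second is $\Theta(n^{-k})\cdot\mathcal{O}_{\mathbb{P}}(n^{-k/2})=\mathcal{O}_{\mathbb{P}}(n^{-3k/2})$, which is negligible for $k\ge2$; this yields (i). Part (ii) follows from the reverse triangle inequality together with $\hat{C}_{\tilde{\alpha}_{\textup{degen}}}-\hat{C}_{\alpha_*}=(\tilde{\alpha}_{\textup{degen}}-\alpha_*)(f^*-\hat{C})$ and $\Vert f^*-\hat{C}\Vert_{\mathcal{H}}=\mathcal{O}_{\mathbb{P}}(1)$. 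Part (iii) follows from $\Vert\hat{C}_{\tilde{\alpha}_{\textup{degen}}}-C\Vert_{\mathcal{H}}\le(1-\tilde{\alpha}_{\textup{degen}})\Vert\hat{C}-C\Vert_{\mathcal{H}}+\tilde{\alpha}_{\textup{degen}}\Vert f^*-C\Vert_{\mathcal{H}}=\mathcal{O}_{\mathbb{P}}(n^{-k/2})+\mathcal{O}_{\mathbb{P}}(n^{-k})$, using (i) and $\tilde{\alpha}_{\textup{degen}}=\Theta_{\mathbb{P}}(n^{-k})$.

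For (iv) I would use the exact identity $\Vert\hat{C}_{\tilde{\alpha}_{\textup{degen}}}-C\Vert^2_{\mathcal{H}}-\Vert\hat{C}_{\alpha_*}-C\Vert^2_{\mathcal{H}}=(\tilde{\alpha}_{\textup{degen}}-\alpha_*)\big\langle f^*-\hat{C},\,(\hat{C}_{\tilde{\alpha}_{\textup{degen}}}-C)+(\hat{C}_{\alpha_*}-C)\big\rangle_{\mathcal{H}}$, take expectations, and apply a three-factor Hölder inequality with the $L^p$ rates $n^{-(2k+1)/2}$, $\mathcal{O}(1)$, and $n^{-k/2}$, giving $\mathbb{E}\Vert\hat{C}_{\tilde{\alpha}_{\textup{degen}}}-C\Vert^2_{\mathcal{H}}\le\Delta_{\alpha_*}+\mathcal{O}(n^{-(3k+1)/2})$; the lower bound $\Delta_{\alpha_*}\le\mathbb{E}\Vert\hat{C}_{\tilde{\alpha}_{\textup{degen}}}-C\Vert^2_{\mathcal{H}}$ would come from the optimality of $\alpha_*$ once the first-order cross term is shown to be nonnegative to the relevant order. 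The main obstacle is precisely this passage from $\mathcal{O}_{\mathbb{P}}$ to $\mathcal{O}$ in expectation: the ratio $\tilde{\alpha}_{\textup{degen}}$ has a random denominator $\hat{\Delta}_{\textup{degen}}+\Vert\hat{C}\Vert^2_{\mathcal{H}}$ that is not bounded away from zero deterministically (indeed $\hat{\Delta}_{\textup{degen}}$ can be negative), so $\tilde{\alpha}_{\textup{degen}}$ carries no a priori moment bounds. I would therefore split on the good event where the denominator exceeds $\tfrac12\Vert C\Vert^2_{\mathcal{H}}$---on which the $L^p$ bounds for $\tilde{\alpha}_{\textup{degen}}-\alpha_*$ follow from those of the numerator---and bound the complementary contribution using the exponential tails from Theorem~\ref{Th:BernUstatsDeg} and the ordinary Bernstein inequality. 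This truncation is the delicate part; a further subtlety is that the cross term governing the lower bound is itself of the critical order $n^{-(3k+1)/2}$, so its sign and magnitude must be tracked rather than discarded.
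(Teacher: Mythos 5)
Your proposal matches the paper's proof in all essentials: the degenerate Bernstein inequality (Theorem~\ref{Th:BernUstatsDeg}) applied to $r-C$ gives $\Vert\hat C - C\Vert_{\mathcal H}=\mathcal O_{\mathbb P}(n^{-k/2})$, the ordinary Bernstein inequality (Theorem~\ref{Th:BernUstats}) applied to $\kappa_k$ and $\kappa_{2k}$ combined with the $\binom{n}{k}^{-1}$ prefactor gives $|\hat\Delta_{\textup{degen}}-\Delta|=\mathcal O_{\mathbb P}(n^{-(2k+1)/2})$, and the remaining algebra (the ratio expansion of $\tilde\alpha_{\textup{degen}}-\alpha_*$, the denominator lower bound on a high-probability event, and the tail-to-expectation conversion) is exactly what the paper packages into the master Theorem~\ref{centralTheorem} with $a=k$ and $b=2k+1$, yielding the rates $\min\{3k,2k+1\}/2=(2k+1)/2$ and $\min\{4k,3k+1,4k+2\}/2=(3k+1)/2$. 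The additional degeneracy observations you make about $\langle C, r-C\rangle_{\mathcal H}$ and the Hoeffding projections of $\kappa_{2k}$ are correct but unnecessary, since the dominant $n^{-1/2}$ contribution from $\kappa_k$ already determines $b=2k+1$.
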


        

 Now, inspired by our analysis of completely degenerate case, we show that $ \tilde{\alpha}_{\textup{degen}}$ is a good estimator of $\alpha_*$ even if $r-C$ is not $\mathbb{P}$-complete degenerate. Specifically, we show that without any assumption of degeneracy,   $\left|\tilde{\alpha}_{\textup{degen}} - \alpha_{*}  \right| =  \mathcal{O}_{\mathbb{P}}(n^{-1 })$ (compared to $\mathcal{O}_{\mathbb{P}}(n^{-3/2})$ with $\tilde{\alpha}_{\textup{general}}$), $\hat{C}_{\tilde{\alpha}_{\textup{degen}}}$ is a $\sqrt{n}$-consistent estimator of $C$ and more importantly that $ \mathbb{E} \Vert\hat{C}_{\tilde{\alpha}_{\textup{degen}}} - C\Vert^2_\mathcal{H}\leq \min_{\alpha} \mathbb{E} \Vert\hat{C}_{\alpha} - C\Vert^2_\mathcal{H} + \mathcal{O}(n^{-3/2})$ (in contrast to $\mathcal{O}(n^{-2})$) as $n \rightarrow \infty.$ This is surprising because the number of terms in $\hat{\Delta}_{\textup{degen}}$ remains constant with $k$ whereas the number of terms in $\hat{\Delta}_{\textup{general}}$ grows linearly with $k$. This means $\hat{\Delta}_{\textup{degen}}$ is computationally efficient than $\hat{\Delta}_{\textup{general}}$ and therefore is $\hat{C}_{\tilde{\alpha}_{\textup{degen}}}$ over $\hat{C}_{\tilde{\alpha}_{\textup{general}}}$. These are captured in the following result, which is proved in Section~\ref{sec:proof-nondeg-deg}.
  \begin{theorem} \label{Th:NonDegC-DegDelta}
   Let $n \geq 2k$, $k\ge 2$, $r:\mathcal{X}^k\rightarrow\mathcal{H}$ be a symmetric function such that $\mathbb{E}\Vert r(X_1,\ldots,X_k)\Vert^2_\mathcal{H}<\infty$ and $r-C$ is $\mathbb{P}$-complete degenerate, where $\mathcal{X}$ is a separable topological space and $\mathcal{H}$ is a separable Hilbert space. Suppose there exists positive constants $\sigma,\sigma_1,\sigma_2$ and $ \theta, \theta_1, \theta_2$ such that $\forall p \geq 2$, 
 \begin{align*}
     \mathbb{E} \Vert r(X_1,\ldots,X_k)-C\Vert_{\mathcal{H}}^p  &\leq \frac{p!}{2} \sigma^2 \theta^{p-2}, \\
   \mathbb{E} \Big|\kappa_{k}(X_1,\dots,X_{k})- \mathbb{E}[\kappa_{k}(X_1,\dots,X_{k})] \Big|^{p} &\leq \frac{p!}{2} \sigma^2_{1} \theta_1^{p-2},\,\,\,\text{and} \\
      \mathbb{E} \Big|\kappa_{2k}(X_1,\dots,X_{2k})- \mathbb{E}[\kappa_{2k}(X_1,\dots,X_{2k})] \Big|^{p} &\leq \frac{p!}{2} \sigma^2_{2} \theta_2^{p-2}.
\end{align*}
Then, as $n\rightarrow\infty$, the following hold:
\begin{itemize}
\item[(i)] $ \left|\tilde{\alpha}_{\textup{degen}} - \alpha_{*}  \right| =  \mathcal{O}_{\mathbb{P}}(n^{-1 })$;
\item[(ii)] $\left| \Vert\hat{C}_{\tilde{\alpha}_{\textup{degen}}}-C
\Vert_{\mathcal{H}} - \Vert\hat{C}_{\alpha_*}-C\Vert_{\mathcal{H}}  \right|=  \mathcal{O}_{\mathbb{P}}(n^{-1 })$;
\item[(iii)] $\hat{C}_{\tilde{\alpha}_{\textup{degen}}}$ is a $\sqrt{n}$-consistent estimator of $C$;
\item[(iv)] $\min_{\alpha} \mathbb{E} \Vert\hat{C}_{\alpha} - C\Vert^2_\mathcal{H} \leq  \mathbb{E} \Vert\hat{C}_{\tilde{\alpha}_{\textup{degen}}} - C\Vert^2_\mathcal{H}\leq \min_{\alpha} \mathbb{E} \Vert\hat{C}_{\alpha} - C\Vert^2_\mathcal{H} + \mathcal{O}(n^{-3/2})$.
\end{itemize}
\end{theorem}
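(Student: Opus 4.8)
The plan is to exploit a mismatch between the quantity that $\hat\Delta_{\textup{degen}}$ estimates and the true risk $\Delta$. Since $\kappa_k(X_1,\dots,X_k)=\Vert r(X_1,\dots,X_k)\Vert_{\mathcal H}^2$ and $\kappa_{2k}(X_1,\dots,X_{2k})=\langle r(X_1,\dots,X_k),r(X_{k+1},\dots,X_{2k})\rangle_{\mathcal H}$, equation~\eqref{Eq:sigmai} gives $\mathbb{E}\hat\Delta_{\textup{degen}}=\binom{n}{k}^{-1}(\mathbb{E}\kappa_k-\mathbb{E}\kappa_{2k})=\sigma_k^2/\binom{n}{k}=\Theta(n^{-k})$, so $\hat\Delta_{\textup{degen}}$ captures only the top-order variance component $\sigma_k^2$. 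By contrast, \eqref{Eq:delta-temp} shows $\Delta=\binom{n}{k}^{-1}\sum_{i=1}^{k}\binom{k}{i}\binom{n-k}{k-i}\sigma_i^2$, whose leading term is $k^2\sigma_1^2/n=\Theta(n^{-1})$ in the generic non-degenerate case $\sigma_1^2>0$ (the regime of interest here; if all $\sigma_i^2=0$ for $i<k$ we recover exactly Theorem~\ref{Th:DegC-DegDelta}). Consequently $\alpha_*=\Delta/(\Delta+\Vert C-f^*\Vert_{\mathcal H}^2)=\Theta(n^{-1})$ while $\tilde\alpha_{\textup{degen}}=\mathcal O_{\mathbb P}(n^{-k})$: both collapse to $0$, and since $k\ge2$ their difference is dominated by $\alpha_*$ itself. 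This single observation drives all four claims.

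First I would record the exact bias--variance identity. Because $\hat C$ is unbiased, writing $\hat C_\alpha-C=(1-\alpha)(\hat C-C)+\alpha(f^*-C)$ and taking expectations kills the cross term, so $\Delta_\alpha=(1-\alpha)^2\Delta+\alpha^2\Vert C-f^*\Vert_{\mathcal H}^2$, minimized at $\alpha_*$ of \eqref{Eq:alpha-star} with $\min_\alpha\Delta_\alpha=\Delta\Vert C-f^*\Vert_{\mathcal H}^2/(\Delta+\Vert C-f^*\Vert_{\mathcal H}^2)$. Next, Bernstein's inequality for $U$-statistics---scalar for the non-degenerate $\textup{U}^n_k[\kappa_k]$ and $\textup{U}^n_{2k}[\kappa_{2k}]$ under the stated moment conditions, and the Hilbert-valued version for $\hat C$ under the condition on $\Vert r-C\Vert_{\mathcal H}$---yields $\textup{U}^n_k[\kappa_k]-\mathbb{E}\kappa_k=\mathcal O_{\mathbb P}(n^{-1/2})$, $\textup{U}^n_{2k}[\kappa_{2k}]-\mathbb{E}\kappa_{2k}=\mathcal O_{\mathbb P}(n^{-1/2})$, and $\Vert\hat C-C\Vert_{\mathcal H}=\mathcal O_{\mathbb P}(n^{-1/2})$. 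Hence $\hat\Delta_{\textup{degen}}=\mathcal O_{\mathbb P}(n^{-k})$, $\Vert\hat C-f^*\Vert_{\mathcal H}^2=\Vert C-f^*\Vert_{\mathcal H}^2+\mathcal O_{\mathbb P}(n^{-1/2})$, and therefore $\tilde\alpha_{\textup{degen}}=\mathcal O_{\mathbb P}(n^{-k})$. Claim (i) then follows from $\vert\tilde\alpha_{\textup{degen}}-\alpha_*\vert\le\tilde\alpha_{\textup{degen}}+\alpha_*=\mathcal O_{\mathbb P}(n^{-k})+\Theta(n^{-1})=\mathcal O_{\mathbb P}(n^{-1})$; claim (iii) from $\Vert\hat C_{\tilde\alpha_{\textup{degen}}}-C\Vert_{\mathcal H}\le\Vert\hat C-C\Vert_{\mathcal H}+\tilde\alpha_{\textup{degen}}\Vert\hat C-f^*\Vert_{\mathcal H}=\mathcal O_{\mathbb P}(n^{-1/2})$; and claim (ii) from the identity $(\hat C_{\tilde\alpha_{\textup{degen}}}-C)-(\hat C_{\alpha_*}-C)=-(\tilde\alpha_{\textup{degen}}-\alpha_*)(\hat C-f^*)$ together with the reverse triangle inequality.

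For the oracle bound (iv) I would decompose, with $Y\defeq\langle\hat C_{\alpha_*}-C,\hat C-f^*\rangle_{\mathcal H}$,
\[
\mathbb{E}\Vert\hat C_{\tilde\alpha_{\textup{degen}}}-C\Vert_{\mathcal H}^2-\min_\alpha\Delta_\alpha=-2\,\mathbb{E}\big[(\tilde\alpha_{\textup{degen}}-\alpha_*)Y\big]+\mathbb{E}\big[(\tilde\alpha_{\textup{degen}}-\alpha_*)^2\Vert\hat C-f^*\Vert_{\mathcal H}^2\big].
\]
The key algebraic fact is the first-order optimality of $\alpha_*$: using $\mathbb{E}\hat C=C$ one gets $\mathbb{E}Y=(1-\alpha_*)\Delta-\alpha_*\Vert C-f^*\Vert_{\mathcal H}^2=0$. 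The stated (right) bound follows from Cauchy--Schwarz on the cross term, $\vert\mathbb{E}[(\tilde\alpha_{\textup{degen}}-\alpha_*)Y]\vert\le\sqrt{\mathbb{E}(\tilde\alpha_{\textup{degen}}-\alpha_*)^2}\,\sqrt{\mathbb{E}Y^2}=\mathcal O(n^{-1})\cdot\mathcal O(n^{-1/2})=\mathcal O(n^{-3/2})$, where $\mathbb{E}(\tilde\alpha_{\textup{degen}}-\alpha_*)^2=\mathcal O(n^{-2})$ (dominated by $\alpha_*^2$) and $\mathbb{E}Y^2=\mathcal O(n^{-1})$ (since $\Vert\hat C_{\alpha_*}-C\Vert_{\mathcal H}=\mathcal O_{\mathbb P}(n^{-1/2})$ and $\Vert\hat C-f^*\Vert_{\mathcal H}=\mathcal O_{\mathbb P}(1)$), while the quadratic term is $\mathcal O(n^{-2})$. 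For the (left) inequality I would use that, as $\mathbb{E}Y=0$ and $\alpha_*$ is deterministic, the cross term equals $\mathbb{E}[\tilde\alpha_{\textup{degen}}Y]$, which by Cauchy--Schwarz is $\mathcal O(\sqrt{\mathbb{E}\tilde\alpha_{\textup{degen}}^2}\cdot n^{-1/2})=\mathcal O(n^{-k-1/2})$; for $k\ge2$ this is of smaller order than the strictly positive quadratic term $\mathbb{E}[(\tilde\alpha_{\textup{degen}}-\alpha_*)^2\Vert\hat C-f^*\Vert_{\mathcal H}^2]=\Theta(n^{-2})$, so the excess risk is positive for large $n$ (indeed of order $n^{-2}$, the reported $n^{-3/2}$ being a conservative form).

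The main obstacle is promoting the $\mathcal O_{\mathbb P}$ statements to the genuine moment bounds $\mathbb{E}(\tilde\alpha_{\textup{degen}}-\alpha_*)^2=\mathcal O(n^{-2})$, $\mathbb{E}\tilde\alpha_{\textup{degen}}^2=\mathcal O(n^{-2k})$, and $\mathbb{E}Y^2=\mathcal O(n^{-1})$ required in (iv), because $\tilde\alpha_{\textup{degen}}$ is a ratio whose denominator $\hat\Delta_{\textup{degen}}+\Vert\hat C-f^*\Vert_{\mathcal H}^2$ could in principle approach zero. I would handle this on a high-probability event $\mathcal A$ on which $\Vert\hat C-f^*\Vert_{\mathcal H}^2\ge\tfrac12\Vert C-f^*\Vert_{\mathcal H}^2$ and $\hat\Delta_{\textup{degen}}$ lies within a constant factor of its mean; the exponential (Bernstein) moment hypotheses force $\mathbb P(\mathcal A^c)$ to decay faster than any polynomial, and on $\mathcal A^c$ the constraint $\tilde\alpha_{\textup{degen}}\in(0,1)$ (cf.\ \eqref{Eq:shrinkage-estimator}) makes the off-event contribution negligible. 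Converting the Bernstein tail bounds into the required $L^2$ and $L^4$ moment bounds on $\tilde\alpha_{\textup{degen}}-\alpha_*$ and on $\hat C-C$ by integrating the tails is the only genuinely technical ingredient; the remainder is the elementary ratio and triangle-inequality bookkeeping above.
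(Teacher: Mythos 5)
Your proposal is correct in substance and rests on the same key observation as the paper: $\hat{\Delta}_{\textup{degen}}$ is an unbiased estimator of $\sigma_k^2/\Mycomb[n]{k}=\mathcal{O}(n^{-k})$ rather than of $\Delta=\mathcal{O}(n^{-1})$, so the dominant contribution to $|\tilde{\alpha}_{\textup{degen}}-\alpha_*|$ is the deterministic gap of order $n^{-1}$, not the stochastic fluctuation. The paper reaches (i)--(iv) by feeding exactly this into its master result (Theorem~\ref{centralTheorem} with $a=1$, $b=2$), writing $|\Delta-\hat{\Delta}_{\textup{degen}}|\le|\Delta-\sigma_k^2/\Mycomb[n]{k}|+\mathcal{O}_{\mathbb{P}}(n^{-k-1/2})\le\max\{\Delta,\sigma_k^2/\Mycomb[n]{k}\}+\mathcal{O}_{\mathbb{P}}(n^{-k-1/2})$ and then running the generic ratio decomposition \eqref{Eq:AlphaTildeMinusAlphaStar} and the elementary bound $\Vert u\Vert^2-\Vert v\Vert^2\le(\Vert u\Vert-\Vert v\Vert)^2+2\Vert v\Vert(\Vert u\Vert-\Vert v\Vert)$; your (i)--(iii) are the same argument with slightly different bookkeeping (bounding $|\tilde{\alpha}-\alpha_*|$ by $\tilde{\alpha}+\alpha_*$ rather than via \eqref{Eq:AlphaTildeMinusAlphaStar}). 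Where you genuinely diverge is (iv): the exact quadratic expansion with cross term $\mathbb{E}[(\tilde{\alpha}_{\textup{degen}}-\alpha_*)Y]$ and the first-order optimality identity $\mathbb{E}Y=0$ is not in the paper. This buys you two things the paper does not deliver: a positivity argument for the left-hand inequality in (iv) (which the paper asserts in its master theorem but never proves --- its proof only establishes the upper bound), and, since $\mathbb{E}Y=0$ lets you replace $\tilde{\alpha}_{\textup{degen}}-\alpha_*$ by $\tilde{\alpha}_{\textup{degen}}=\mathcal{O}_{\mathbb{P}}(n^{-k})$ in the cross term, an excess risk of order $n^{-2}$ rather than the stated $n^{-3/2}$ --- a genuine sharpening if the moment bounds go through.

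Two caveats. First, your quantitative claims $\alpha_*=\Theta(n^{-1})$ and ``quadratic term $\Theta(n^{-2})$'' presuppose $\sigma_1^2>0$; the upper bounds in (i)--(iv) survive without this, but the strict left inequality and the $\Theta(n^{-2})$ assertion need the caveat made explicit. Second, your justification for the off-event contribution --- that $\tilde{\alpha}_{\textup{degen}}\in(0,1)$ ``by \eqref{Eq:shrinkage-estimator}'' --- does not hold: $\hat{\Delta}_{\textup{degen}}$ is a difference of $U$-statistics and can be negative, so the ratio in \eqref{Eq:alpha-degen} is not a priori confined to $(0,1)$ and its denominator can be small. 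You therefore need the tail bounds to control $\tilde{\alpha}_{\textup{degen}}$ at all deviation levels (as in Lemma~\ref{lem:split_to_exp_bound}) rather than a pointwise boundedness argument. To be fair, the paper's own proof of part (iv) of Theorem~\ref{centralTheorem} has the same soft spot (its high-probability bounds require $n\gtrsim 1+\tau$, yet Lemma~\ref{lem:split_to_exp_bound} is invoked as if they held for all $\tau$), so this is a shared gap rather than a defect unique to your argument.
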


In the above result, we assumed $k>1$. The reason being, when $k=1$, we have $\hat{\Delta}_{\textup{general}}=\hat{\Delta}_{\textup{degen}}$, and the claims follow from Theorem~\ref{Th:NonDegC-NonDegDelta}. 
\begin{example}[Covariance operator]\label{Ex:cov-2}
For the same setting as in Example~\ref{Ex:cov}, we obtain
    \begin{align*}
        \hat{\Delta}_{\textup{degen}}
        &=\frac{1}{\Mycomb[n]{2}}\textup{U}^n_2\left[\kappa_2(X_1,X_2)\right]-\frac{1}{\Mycomb[n]{2}}\textup{U}^n_4\left[\kappa_4(X_1,X_2,X_3,X_4)\right]\\
        &= \frac{1}{4\cdot\Mycomb[n]{2}\cdot \Myperm[n]{2} } \sum_{i\ne j}  \left[ K(X_i,X_i) - 2k(X_i,X_j) + K(X_j,X_j) \right]^2  \\
        &\quad-  \frac{1}{4\cdot \Mycomb[n]{2}\cdot \Myperm[n]{4}} \sum_{i \ne j \ne l \ne m }  \left[ K(X_i,X_l) - k(X_i,X_m) - K(X_j,X_l) + K(X_j,X_m) \right]^2,
    \end{align*}
    which reduces to
    \begin{align*}
    \hat{\Delta}_{\textup{degen}} &=\frac{n (n^2-3n+4)}{ 2\cdot\Mycomb[n]{2}\cdot \Myperm[n]{4}}  \sum_{i=1}^{n} \Vert \tilde{X_i} \Vert_{2}^4 -  \frac{2n^2(n-2)}{\Mycomb[n]{2}\cdot \Myperm[n]{4}} \textup{Tr}[\hat{\Sigma}^2] +  \frac{n^2 (n^2-5n+4)}{2\cdot \Mycomb[n]{2} \cdot\Myperm[n]{4} }  \textup{Tr}^2 [\hat{\Sigma}],
    \end{align*}
    when $K(x,y)=\langle x,y\rangle_2,\,x,y\in\mathbb{R}^d$. See Proposition~\ref{appxpro:cov} for details.
\end{example}
The proposed shrinkage estimators $\hat{C}_{\tilde{\alpha}_{\textup{general}}}$ and $\hat{C}_{\tilde{\alpha}_{\textup{degen}}}$ can be shown to be solutions to regularized minimization problems. Since
$$\hat{C}_\alpha=\arg\inf_{g\in\mathcal{H}}\frac{1}{\Mycomb[n]{k}}\sum_{(i_1,\ldots,i_k)\in J^n_k}\left\Vert r(X_{i_1},\ldots,X_{i_k})-g\right\Vert^2_\mathcal{H}+\frac{\alpha}{1-\alpha}\Vert g-f^*\Vert^2_\mathcal{H},$$
where $\frac{\alpha}{1-\alpha},\,0<\alpha<1$ acts as the regularization parameter, it follows that the choice of $\frac{\tilde{\alpha}_{\textup{general}}}{1-\tilde{\alpha}_{\textup{general}}}$ and $\frac{\tilde{\alpha}_{\textup{degen}}}{1-\tilde{\alpha}_{\textup{degen}}}$ as regularization parameters yield $\hat{C}_{\tilde{\alpha}_{\textup{general}}}$ and $\hat{C}_{\tilde{\alpha}_{\textup{degen}}}$, respectively. This demonstrates the regularization effect of shrinkage estimators. A similar result was shown in \citep{muandet2016kernel} when $f^*=0$, $\mathcal{H}=\mathscr{H}_K$, $k=1$ and $r(x)=K(\cdot,x)$. 

  \section{Normal Mean Estimation}\label{Sec:normal}
In Section~\ref{Sec:main results}, we only established oracle bounds on the mean squared error that include an error term, since no parametric assumptions were made on $\mathbb{P}$. In this section, we study the estimator $\hat{C}_{\tilde{\alpha}_{\textup{general}}}$ when $\Cal{X}=\mathbb{R}^d$, $\mathcal{H}=\mathbb{R}^d$, $r(x)=x$ and $\mathbb{P}$ is a normal distribution, i.e., the shrinkage estimation of normal mean. Note that the degenerate case is not applicable in this setting as  $k=1$. This is the classical setting studied heavily in the literature \citep{brandwein2012stein}. Since $\mathbb{P}$ is Gaussian, we show that concrete results can be obtained on the mean-squared error of $\hat{C}_{\tilde{\alpha}_{\textup{general}}}$, in contrast to oracle inequalities of the previous section. 

Define $C=\int_\mathcal{X} r(x)\,d\mathbb{P}(x)=\int x\,d\mathbb{P}(x)=:\mu$ and $\hat{C}=\frac{1}{n}\sum^n_{i=1}X_i=\bar{X}=:\hat{\mu}$. In this setting with $f^*=0$, it is easy to verify that
\begin{eqnarray*}
\hat{\Delta}_{\textup{general}}&{}={}&\frac{1}{n}\left[\frac{1}{n}\sum^n_{i=1}\Vert X_i\Vert^2_2-\frac{1}{n(n-1)}\sum_{i\ne j}\langle X_i,X_j\rangle_2\right] \nonumber\\
&{}={}&\frac{1}{n}\left[\frac{1}{n}\sum^n_{i=1}\Vert X_i\Vert^2_2-\frac{1}{n(n-1)}\sum^n_{i,j=1}\langle X_i,X_j\rangle_2+\frac{1}{n(n-1)}\sum^n_{i=1}\Vert X_i\Vert^2_2\right] \nonumber\\
&{}={}&\frac{1}{n}\left[\frac{1}{n-1}\sum^n_{i=1}\Vert X_i\Vert^2_2-\frac{n}{n-1}\left\Vert\frac{1}{n}\sum^n_{i=1}X_i\right\Vert^2_2\right] \nonumber\\
&{}={}& \frac{1}{n}\left[\frac{1}{n-1}\sum^n_{i=1}\Vert X_i\Vert^2_2-\frac{n}{n-1}\Vert \bar{X}\Vert^2_2\right]=\frac{1}{n(n-1)}\sum^n_{i=1}\Vert X_i-\bar{X}\Vert^2_2=:\frac{S^2}{n},
\end{eqnarray*}
and
\begin{equation*}\hat{C}_{\tilde{\alpha}_{\textup{general}}}=:\check{\mu}=\frac{\Vert\bar{X}\Vert^2_2}{\frac{S^2}{n}+\Vert \bar{X}\Vert^2_2}\bar{X}=\left(1-\frac{\frac{S^2}{n}}{\frac{S^2}{n}+\Vert \bar{X}\Vert^2_2}\right)\bar{X}.
\end{equation*}
The following result (proved in Section~\ref{subsec:gauss}) shows that the shrinkage estimator, $\check{\mu}$ has strictly smaller mean squared error compared to $\hat{\mu}$ when $d\ge 4+\frac{2}{n-1}$.  
\begin{theorem}\label{Thm:shrinkage}
Let $X_1,\ldots,X_n\stackrel{i.i.d.}{\sim} N_d(\mu,\sigma^2 I)$. For $n\ge2$ and $d\ge 4+\frac{2}{n-1}$, $$\mathbb{E}\norm{\check{\mu}-\mu}^2_2<\mathbb{E}\Vert \hat{\mu}-\mu\Vert^2_2$$
for all $\mu\in\mathbb{R}^d$ and $\sigma^2>0$.
\end{theorem}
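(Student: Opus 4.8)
The plan is to prove the result via Stein's unbiased risk estimate (SURE), exploiting the independence of $\bar X$ and $S^2$ that holds for Gaussian samples. Writing $\tau^2 := \sigma^2/n$ so that $\bar X \sim N_d(\mu,\tau^2 I)$, and setting $R := \Vert \bar X\Vert_2^2$ and $W := S^2/n$, the estimator takes the additive form $\check\mu = \bar X + g(\bar X)$ with $g(\bar X) = -\tfrac{W}{W+R}\,\bar X$, where $g$ depends on the data only through $\bar X$ and the independent scalar $W$. First I would condition on $S^2$ and apply Stein's identity to the Gaussian vector $\bar X$, which yields
\[
\mathbb{E}\Vert \check\mu-\mu\Vert_2^2 - \mathbb{E}\Vert \hat\mu-\mu\Vert_2^2 = \mathbb{E}\left[\,2\tau^2\,\mathrm{div}\,g(\bar X) + \Vert g(\bar X)\Vert_2^2\,\right],
\]
the regularity conditions being immediate since $\Vert g\Vert_2 \le \Vert \bar X\Vert_2$ and $g$ is smooth in $\bar X$.

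Next I would compute the two pieces explicitly. A direct calculation gives $\Vert g\Vert_2^2 = W^2 R/(W+R)^2$ and $\mathrm{div}\,g = -dW/(W+R) + 2WR/(W+R)^2$, so that after rearranging (using $R/(W+R)=1-W/(W+R)$) the integrand becomes
\[
h := 2\tau^2\,\mathrm{div}\,g + \Vert g\Vert_2^2 = \frac{W\,[\,W - 2\tau^2(d-2)\,]}{W+R} - \frac{W^2\,(W+4\tau^2)}{(W+R)^2}.
\]
The second term is almost surely strictly negative, so it has strictly negative expectation and can be discarded for a strict upper bound: $\mathbb{E}[h] < \mathbb{E}\big[\,W(W-2\tau^2(d-2))/(W+R)\,\big]$.

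The crux, and the main obstacle, is that the effective shrinkage magnitude is the \emph{random} quantity $W=S^2/n$ rather than a fixed constant, so the classical James--Stein bound does not apply directly. To absorb the extra power of $W$ in the numerator I would invoke the gamma/chi-square Stein identity: since $W=\tfrac{\tau^2}{n-1}V$ with $V\sim\chi^2_{d(n-1)}$, one has $\mathbb{E}[W\Psi(W)] = d\tau^2\,\mathbb{E}[\Psi(W)] + \tfrac{2\tau^2}{n-1}\,\mathbb{E}[W\Psi'(W)]$ for suitable $\Psi$. Applying this with $\Psi(W)=W/(W+R)$, conditionally on $R$ and using $W\perp R$, reduces $\mathbb{E}[W^2/(W+R)]$, and collecting terms gives the exact identity
\[
\mathbb{E}\left[\frac{W\,[\,W-2\tau^2(d-2)\,]}{W+R}\right] = (4-d)\,\tau^2\,\mathbb{E}\frac{W}{W+R} + \frac{2\tau^2}{n-1}\,\mathbb{E}\frac{WR}{(W+R)^2}.
\]

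Finally I would bound $WR/(W+R)^2 \le W/(W+R)$ in the second summand, so that the whole expression is at most $\tau^2\big[(4-d)+\tfrac{2}{n-1}\big]\,\mathbb{E}[W/(W+R)]$. Since $\mathbb{E}[W/(W+R)]>0$ for every value of $R$, this upper bound is nonpositive exactly when $d \ge 4 + \tfrac{2}{n-1}$, and combined with the strictly negative discarded term this forces $\mathbb{E}[h]<0$, proving $\mathbb{E}\Vert\check\mu-\mu\Vert_2^2 < \mathbb{E}\Vert\hat\mu-\mu\Vert_2^2$. I would emphasize that the (noncentral $\chi^2$) distribution of $R$ never enters the final inequality, which is precisely why the improvement holds uniformly over all $\mu\in\mathbb{R}^d$ and $\sigma^2>0$.
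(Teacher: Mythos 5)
Your proposal is correct, and after the first step it diverges from the paper's argument in an interesting way. Both proofs open identically: apply Stein's Gaussian integration-by-parts identity to $\bar X$ (conditionally on the independent $S^2$) to express the risk difference as $\mathbb{E}\bigl[2\tau^2\,\mathrm{div}\,g+\Vert g\Vert_2^2\bigr]$; your algebraic rearrangement of the integrand and the paper's display \eqref{Eq:step1}--\eqref{Eq:step2} are equivalent. The paper then evaluates the resulting expectation \emph{exactly}: it writes $\frac{1}{a^2}=\int_0^\infty t e^{-at}\,dt$, applies Fubini, and computes the Laplace transforms of $Y\sim\chi^2_{(n-1)d}$ and the noncentral $Z\sim\chi^2_d(\lambda)$ together with their derivatives, reducing positivity of the risk gap to positivity of the coefficient $\theta_2=4a(d-2)+(n-1)(d-4)-2$, which is where $d\ge 4+\frac{2}{n-1}$ appears. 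You instead handle the random shrinkage scale by a second Stein-type identity --- the gamma/chi-square identity $\mathbb{E}[W\Psi(W)]=d\tau^2\,\mathbb{E}[\Psi(W)]+\frac{2\tau^2}{n-1}\mathbb{E}[W\Psi'(W)]$ applied to $\Psi(W)=W/(W+R)$ conditionally on $R$ --- followed by the pointwise bound $WR/(W+R)^2\le W/(W+R)$ and discarding the almost surely negative term $-W^2(W+4\tau^2)/(W+R)^2$. I checked the divergence computation, the rearrangement of $h$, the scaled chi-square identity (it is consistent with $\mathbb{E}[W]=d\tau^2$ and $\mathbb{E}[W^2]$), and the final sign argument; all are correct, the regularity conditions are indeed routine, and you recover exactly the same threshold $d\ge 4+\frac{2}{n-1}$. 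Your route is shorter and makes transparent why the noncentrality parameter $\lambda$ (hence $\mu$) never enters; the paper's route is heavier but yields an exact integral formula for the risk difference, which it then reuses essentially verbatim to prove the follow-up result (Theorem~\ref{Thm:family}) for the modified estimator $\check\mu_c$ --- if you wanted that extension, you would need to track how the constant $c$ propagates through your two inequalities rather than just reread an existing formula.
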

When $n=2$, $\check{\mu}$ improves upon $\hat{\mu}$ for $d\ge 6$. For all $n\ge 3$, the improvement phenomenon occurs for $d\ge 5$. By slightly modifying the estimator $\check{\mu}$, the following result (proved in Section~\ref{Subsec:family}) shows improvement over $\hat{\mu}$ when $d\ge 3$.
    
\begin{theorem}\label{Thm:family}
Let $X_1,\ldots,X_n\stackrel{i.i.d.}{\sim} N_d(\mu,\sigma^2 I)$. For $n\ge 2$, $c\in(0,2)$ and $d\ge \frac{4}{2-c}+\frac{2c}{(n-1)(2-c)}$, \begin{equation}\mathbb{E}\Vert\check{\mu}_c -\mu \Vert^2_2<\mathbb{E}\Vert\hat{\mu}-\mu\Vert^2_2 \label{Eq:improve} \end{equation}
for all $\mu\in\mathbb{R}^d$ and $\sigma^2>0$ where $\check{\mu}_c=(1-c\tilde{\alpha}_{\textup{general}})\hat{\mu}$ with $\tilde{\alpha}_{\textup{general}}=\frac{\frac{S^2}{n}}{\frac{S^2}{n}+\Vert \bar{X}\Vert^2_2}$. In particular,
if $c=\frac{2n-2}{3n-1}$, then (\ref{Eq:improve}) holds for all $d\ge 3$.
\end{theorem}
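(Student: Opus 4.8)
The plan is to turn the risk comparison into Stein's unbiased risk estimate (SURE) and then into a scalar sign analysis. Recall $\check\mu_c=(1-c\tilde\alpha_{\textup{general}})\bar X=\bar X-\eta(\bar X)$ with $\eta(x)=\frac{cV x}{V+\|x\|_2^2}$ and $V:=S^2/n$, where $\bar X\sim N_d(\mu,\tau^2 I)$ with $\tau^2:=\sigma^2/n$, and $(n-1)S^2/\sigma^2\sim\chi^2_{d(n-1)}$ is independent of $\bar X$. First I would condition on $S^2$ (equivalently on $V$) and apply Stein's lemma to the Gaussian $\bar X$, using $\|\eta(x)\|_2^2=\frac{c^2V^2\|x\|_2^2}{(V+\|x\|_2^2)^2}$ and $\nabla\!\cdot\eta(x)=cV\frac{dV+(d-2)\|x\|_2^2}{(V+\|x\|_2^2)^2}$. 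Writing $W:=\|\bar X\|_2^2$, this yields the exact identity
$$\mathbb{E}\|\check\mu_c-\mu\|_2^2-\mathbb{E}\|\hat\mu-\mu\|_2^2=\mathbb{E}\!\left[\frac{cV\big(cVW-2\tau^2(dV+(d-2)W)\big)}{(V+W)^2}\right],$$
the expectation being over both $V$ and $\bar X$, so that \eqref{Eq:improve} is equivalent to the strict negativity of the right-hand side for every $\mu$.

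The sign of the integrand is governed by the balance between $cV$ and $2\tau^2(d-2)$. Since $V$ concentrates around $\mathbb{E}[V]=d\tau^2$, the leading requirement $c\,\mathbb{E}[V]\le 2\tau^2(d-2)$ reads $cd\le 2(d-2)$, i.e. $d\ge\frac{4}{2-c}$, which is precisely the leading term of the stated threshold. The $(n-1)^{-1}$ correction originates from the fluctuations of $V$: because $V=\frac{\tau^2}{n-1}\chi^2_{d(n-1)}$ one has $\mathbb{E}[V^2]=d\tau^4\big(d+\frac{2}{n-1}\big)$, and it is this second moment, entering through the quadratic term $c^2V^2W/(V+W)^2$, that supplies the additive $\frac{2c}{(n-1)(2-c)}$. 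This is the $c=1$ computation behind Theorem~\ref{Thm:shrinkage} with the scalar $c$ carried through, so I would mirror that proof and track every place where $c$ appears.

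The main obstacle is to make the comparison uniform in $\mu$. As $\|\mu\|_2\to\infty$ both the positive term $c^2V^2W/(V+W)^2$ and the negative divergence term vanish, so the improvement in \eqref{Eq:improve} is strict but tends to $0$; consequently no crude bound can succeed — for instance the AM-GM inequality $VW/(V+W)^2\le\frac14$, or replacing $cVW$ by $\frac c4(V+W)^2$, discards the $(V+W)^{-2}$ decay and produces a positive constant that does not vanish as $\|\mu\|_2\to\infty$, hence cannot certify improvement in the large-$\|\mu\|_2$ regime. The analysis must therefore retain the joint $(V,W)$ structure. I would evaluate the inner expectation over $W$ exactly, using that $W/\tau^2$ is noncentral $\chi^2_d(\|\mu\|_2^2/\tau^2)$ and that the inverse moments $\mathbb{E}_W[(V+W)^{-1}]$ and $\mathbb{E}_W[(V+W)^{-2}]$ are finite (as $V>0$) and monotone decreasing in $\|\mu\|_2^2$ — conveniently seen through the Poisson mixture representation of the noncentral $\chi^2$ — and then integrate against the $\chi^2_{d(n-1)}$ law of $V$. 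The monotonicity in the noncentrality reduces the uniform claim to controlling the sign of a scalar function of $(d,c,n)$, into which the exact first and second moments of $V$ feed to produce exactly $d\ge\frac{4}{2-c}+\frac{2c}{(n-1)(2-c)}$; the delicate point is establishing strict negativity for every finite $\|\mu\|_2$ rather than only in a limiting or single-point sense.

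For the final assertion I would substitute $c=\frac{2n-2}{3n-1}\in(0,2)$ into the threshold. Then $2-c=\frac{4n}{3n-1}$, so $\frac{4}{2-c}=3-\frac1n$ and $\frac{2c}{(n-1)(2-c)}=\frac1n$, and the condition collapses to $d\ge 3$, recovering a James--Stein-type improvement in every dimension $d\ge3$.
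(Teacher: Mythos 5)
Your Stein-identity setup coincides with the paper's first step: conditioning on $S^2$ and integrating by parts gives, in the paper's notation ($Z=n\Vert\bar X\Vert_2^2/\sigma^2\sim\chi^2_d(\lambda)$ with $\lambda=n\Vert\mu\Vert_2^2/\sigma^2$, and $Y=n(n-1)U/\sigma^2\sim\chi^2_{(n-1)d}$ independent of $Z$),
$$\mathbb{E}\Vert\hat\mu-\mu\Vert_2^2-\mathbb{E}\Vert\check{\mu}_c-\mu\Vert_2^2=\frac{c\sigma^2}{n}\,\mathbb{E}\left[\frac{(2d-cZ)Y^2+YZ(n-1)(2d-4)}{(Y+(n-1)Z)^2}\right],$$
which agrees with your identity after the change of variables; your divergence computation, your diagnosis that the improvement vanishes as $\Vert\mu\Vert_2\to\infty$ so that no crude bound on $VW/(V+W)^2$ can succeed, and your arithmetic showing that $c=\frac{2n-2}{3n-1}$ collapses the threshold to $d\ge3$ are all correct.

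The gap is in the uniform-in-$\mu$ sign analysis, which is the heart of the proof and which you explicitly defer as ``the delicate point.'' Monotonicity of $\mathbb{E}_W[(V+W)^{-1}]$ and $\mathbb{E}_W[(V+W)^{-2}]$ in the noncentrality does not by itself close the argument: after expanding, the quantity to be shown positive is a difference of terms each of which is decreasing in $\lambda$ and tends to $0$, so the combination need not be monotone and its sign is not determined by endpoint or first/second-moment considerations. (Indeed, the stated threshold on $d$ is exactly the condition that makes the large-$Z$ asymptotics of the integrand nonnegative, so there is no slack to give away.) The paper's device for resolving this is the identity $\frac{1}{a^2}=\int_0^\infty te^{-at}\,dt$ combined with Fubini: the factor $(Y+(n-1)Z)^{-2}$ becomes an integral of $t\,e^{-tY}e^{-t(n-1)Z}$, the expectation factorizes into explicit Laplace transforms of $Y$ and of the noncentral $Z$ (and their derivatives), and after a change of variables the integrand takes the form $(\theta_3+\theta_4\lambda)e^{-a\lambda/(1+2a)}$ times a positive factor, with $\theta_3>0$ for $d\ge2$, $c\in[0,2)$, and $\theta_4\ge0$ precisely when $d\ge\frac{4}{2-c}+\frac{2c}{(n-1)(2-c)}$. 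Positivity for every $\lambda\ge0$ is then immediate because the $\lambda$-dependence is affine times a positive exponential, pointwise in the integration variable. Without this (or an equivalent) structural reduction, your outline identifies the right quantities but does not yet constitute a proof.
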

It is interesting to note that the estimator $\check{\mu}_c$ with $c=\frac{2n-2}{3n-1}$ behaves similar to that of the James-Stein estimator in showing improvement over $\hat{\mu}$ for $d\ge 3$ but with important differences. $\check{\mu}$ has an additional term of $\frac{S^2}{n}$ in the denominator and $c$ depends only on $n$ instead of $d$---James-Stein estimator has $c=d-2$. Because of this additional term in the denominator, establishing Theorem~\ref{Thm:family} is far more tedious than proving such a result for the James-Stein estimator. In fact, because of this additional term in the denominator, we are not able to establish concrete results in the non-spherical Gaussian scenario and it remains as an open question.
 \section{Proofs}\label{Sec:proofs}

    
The following is a master theorem, which we will repeatedly use to prove the results of Section~\ref{Sec:main results}.      
\begin{theorem}
\label{centralTheorem}
Let $\hat{C}$ and $\hat{\Delta}$ be unbiased estimators of $C$ and $\Delta$, respectively,  where $\Delta = \mathbb{E}\Vert\hat{C}-C\Vert_{\mathcal{H} }^2$. For $\tau >0$, suppose there exists positive constants $a,b,c_{1},c_{2},c_{3},d_{1},d_{2}$  that does not depend on $\tau$ and $n$ such that the following statements hold with probability at least $1- c_{3}e^{-\tau}$:
\begin{align}
\Vert\hat{C}- C\Vert_{\mathcal{H}} &\leq c_{1} \left( \frac{1+\tau}{n}\right)^{a/2} + c_{2} \left( \frac{1+\tau}{n}\right)^{(a+1)/2}, \label{Eq:C}\\
\abs{\hat{\Delta} - \Delta} &\leq  d_1 \left(\frac{1+\tau}{n}\right)^{b/2} + d_2  \left( \frac{1+\tau}{n}\right)^{(b+1)/2}. \nonumber 
\end{align}
Define $\alpha_*=\frac{\Delta}{\Delta+\Vert C-f^{*}\Vert^2_\mathcal{H}} $ and $    \hat{C}_{\tilde{\alpha}} = (1-\tilde{\alpha}) \hat{C} +  \tilde{\alpha} f^{*}$ as an estimator of $C$
where $\tilde{\alpha}  = \frac{\hat{\Delta}}{\hat{\Delta} + \Vert\hat{C} -f^{*}\Vert^2_{\mathcal{H}}}$. Then as $n\rightarrow\infty$, the following hold:
\begin{itemize}
\item[(i)] $\abs{\tilde{\alpha} - \alpha_{*}  } =  \mathcal{O}_{\mathbb{P}}\left(n^{- \min\{3a, b\}/2} \right)$;
\item[(ii)] $\abs{ \Vert\hat{C}_{\tilde{\alpha}}-C\Vert_\mathcal{H} - \Vert\hat{C}_{\alpha_*}-C\Vert_{\mathcal{H}}}=   \mathcal{O}_{\mathbb{P}}\left(n^{- \min\{3a, b\}/2} \right)$;
\item[(iii)] $\hat{C}_{\tilde{\alpha}}$ is a $n^{\min\{a,b\}/2}$- consistent estimator of $C$;
\item[(iv)] $\min_{\alpha} \mathbb{E} \Vert\hat{C}_{\alpha} - C\Vert^2_\mathcal{H} \leq  \mathbb{E} \Vert\hat{C}_{\tilde{\alpha}} - C\Vert^2_\mathcal{H} \leq \min_{\alpha} \mathbb{E} \Vert\hat{C}_{\alpha} - C\Vert^2_\mathcal{H} + \mathcal{O}(n^{-\min\{4a,(a+b),2b \}/2})$.
\end{itemize}
\end{theorem}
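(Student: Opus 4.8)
The plan is to treat this as a deterministic sensitivity analysis of the plug-in map $(\hat{C},\hat{\Delta})\mapsto\hat{C}_{\tilde\alpha}$ around the population point $(C,\Delta)$, fed by the two hypothesized concentration bounds. First I would record three preliminaries. (a) Writing $B\defeq\norm{C-f^*}_{\mathcal{H}}^2$ and $\hat{B}\defeq\norm{\hat{C}-f^*}_{\mathcal{H}}^2$, the decomposition $\hat{B}-B=\norm{\hat{C}-C}_{\mathcal{H}}^2+2\inner{\hat{C}-C}{C-f^*}_{\mathcal{H}}$ together with \eqref{Eq:C} gives $|\hat{B}-B|=\mathcal{O}_{\mathbb{P}}(n^{-a/2})$ and $\hat{B}\to B$, so both denominators $\hat\Delta+\hat B$ and $\Delta+B$ stay bounded away from $0$ with high probability (assuming $C\ne f^*$). (b) Integrating the tail bound \eqref{Eq:C} shows $\Delta=\mathbb{E}\norm{\hat{C}-C}_{\mathcal{H}}^2=\mathcal{O}(n^{-a})$, hence $\alpha_*=\Delta/(\Delta+B)=\mathcal{O}(n^{-a})$; this smallness of $\Delta$ is what upgrades a naive rate to the stated one. (c) The same tails yield controlled moments of every order for $\norm{\hat{C}-C}_{\mathcal{H}}$ and $\hat\Delta-\Delta$, which I will need to pass from $\mathcal{O}_{\mathbb{P}}$ statements to bounds on expectations in part $(iv)$.

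The engine for $(i)$--$(iii)$ is the identity
\begin{equation*}
\tilde\alpha-\alpha_*=\frac{(\hat\Delta-\Delta)B-\Delta(\hat{B}-B)}{(\hat\Delta+\hat{B})(\Delta+B)}.
\end{equation*}
The denominator converges to $B^2>0$, the first numerator term is $\mathcal{O}_{\mathbb{P}}(n^{-b/2})$, and the second is $\Delta\cdot(\hat{B}-B)=\mathcal{O}(n^{-a})\cdot\mathcal{O}_{\mathbb{P}}(n^{-a/2})=\mathcal{O}_{\mathbb{P}}(n^{-3a/2})$; together these give $(i)$. For $(ii)$ I would use $\hat{C}_{\tilde\alpha}-\hat{C}_{\alpha_*}=(\alpha_*-\tilde\alpha)(\hat{C}-f^*)$ and the reverse triangle inequality to get $\big|\,\norm{\hat{C}_{\tilde\alpha}-C}_{\mathcal{H}}-\norm{\hat{C}_{\alpha_*}-C}_{\mathcal{H}}\big|\le|\tilde\alpha-\alpha_*|\,\norm{\hat{C}-f^*}_{\mathcal{H}}$, and since $\norm{\hat{C}-f^*}_{\mathcal{H}}=\mathcal{O}_{\mathbb{P}}(1)$ this inherits the rate of $(i)$. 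For $(iii)$, bounding $\norm{\hat{C}_{\alpha_*}-C}_{\mathcal{H}}\le(1-\alpha_*)\norm{\hat{C}-C}_{\mathcal{H}}+\alpha_*\sqrt{B}=\mathcal{O}_{\mathbb{P}}(n^{-a/2})$ and combining with $(ii)$ yields $\norm{\hat{C}_{\tilde\alpha}-C}_{\mathcal{H}}=\mathcal{O}_{\mathbb{P}}(n^{-\min\{a,b\}/2})$, using $\min\{a,\min\{3a,b\}\}=\min\{a,b\}$.

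For $(iv)$ I would square the displacement $\hat{C}_{\tilde\alpha}-C=(\hat{C}_{\alpha_*}-C)+(\alpha_*-\tilde\alpha)(\hat{C}-f^*)$ and take expectations, using that $\alpha_*$ is the deterministic minimizer of $g(\alpha)\defeq(1-\alpha)^2\Delta+\alpha^2B$, so $\min_\alpha\mathbb{E}\norm{\hat{C}_\alpha-C}_{\mathcal{H}}^2=\mathbb{E}\norm{\hat{C}_{\alpha_*}-C}_{\mathcal{H}}^2=\Delta B/(\Delta+B)$:
\begin{equation*}
\mathbb{E}\norm{\hat{C}_{\tilde\alpha}-C}_{\mathcal{H}}^2=\frac{\Delta B}{\Delta+B}+2\,\mathbb{E}\!\left[(\alpha_*-\tilde\alpha)\inner{\hat{C}_{\alpha_*}-C}{\hat{C}-f^*}_{\mathcal{H}}\right]+\mathbb{E}\!\left[(\alpha_*-\tilde\alpha)^2\norm{\hat{C}-f^*}_{\mathcal{H}}^2\right].
\end{equation*}
The upper bound reduces to the last two terms. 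Expanding the inner product shows its dominant mean-zero part is $\inner{\hat{C}-C}{f^*-C}_{\mathcal{H}}=\mathcal{O}_{\mathbb{P}}(n^{-a/2})$, so a Cauchy--Schwarz split of the cross term against the two numerator pieces of $\tilde\alpha-\alpha_*$ produces contributions of orders $n^{-(a+b)/2}$ and $n^{-2a}$, while the quadratic term contributes $n^{-b}$ (from $\mathbb{E}(\hat\Delta-\Delta)^2$) and smaller; the worst of these is exactly $n^{-\min\{4a,a+b,2b\}/2}$. The lower bound $\mathbb{E}\norm{\hat{C}_{\tilde\alpha}-C}_{\mathcal{H}}^2\ge\min_\alpha\mathbb{E}\norm{\hat{C}_\alpha-C}_{\mathcal{H}}^2$ follows because, after re-centering at $\alpha_*$, the leading correction is the non-negative quadratic $(\Delta+B)\,\mathbb{E}[(\tilde\alpha-\alpha_*)^2]$.

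I expect the main obstacle to be part $(iv)$: passing from the high-probability bounds \eqref{Eq:C} to genuine bounds on expectations requires extracting sub-exponential moment control by integrating the tails, and then tracking the \emph{correlated} products $(\alpha_*-\tilde\alpha)\inner{\hat{C}-C}{f^*-C}_{\mathcal{H}}$ sharply enough---via Cauchy--Schwarz and the separate rates of the two numerator pieces of $\tilde\alpha-\alpha_*$---to recover the precise exponent $\min\{4a,a+b,2b\}$ rather than a cruder bound. Controlling the size of these covariance corrections relative to the non-negative quadratic term is also the delicate point in making the oracle lower bound of $(iv)$ rigorous.
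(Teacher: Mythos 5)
Your proposal is correct and, for parts $(i)$--$(iii)$, is essentially the paper's own argument: the same algebraic identity for $\tilde\alpha-\alpha_*$, the same key observation that integrating the tail of \eqref{Eq:C} gives $\Delta=\mathcal{O}(n^{-a})$ and hence $\alpha_*=\mathcal{O}(n^{-a})$ (which is exactly what upgrades the $\Delta(\hat{B}-B)$ term to $\mathcal{O}_{\mathbb{P}}(n^{-3a/2})$), the same lower bound on the denominator, and the same reverse-triangle-inequality step for $(ii)$ and $(iii)$. The only real divergence is in $(iv)$: you expand $\hat{C}_{\tilde\alpha}-C=(\hat{C}_{\alpha_*}-C)+(\alpha_*-\tilde\alpha)(\hat{C}-f^*)$ and control the cross and quadratic terms in expectation, whereas the paper writes $\Vert\hat{C}_{\tilde\alpha}-C\Vert^2_{\mathcal{H}}-\Vert\hat{C}_{\alpha_*}-C\Vert^2_{\mathcal{H}}=(x-y)^2+2y(x-y)$ with $x=\Vert\hat{C}_{\tilde\alpha}-C\Vert_{\mathcal{H}}$, $y=\Vert\hat{C}_{\alpha_*}-C\Vert_{\mathcal{H}}$, bounds this with high probability using the outputs of $(ii)$ and $(iii)$, and converts to an expectation bound via the tail-integration lemma. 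The paper's cruder route already yields the exponent $\min\{4a,a+b,2b\}/2$ (since $y=\mathcal{O}_{\mathbb{P}}(n^{-a/2})$ and $|x-y|=\mathcal{O}_{\mathbb{P}}(n^{-\min\{3a,b\}/2})$), so the mean-zero structure of $\inner{\hat{C}-C}{C-f^*}_{\mathcal{H}}$ and the careful Cauchy--Schwarz splitting you flag as the delicate point are not actually needed. One caveat: your justification of the oracle lower bound in $(iv)$ (that the leading correction after re-centering is a non-negative quadratic) does not hold as stated, since the quadratic term is $\mathbb{E}[(\alpha_*-\tilde\alpha)^2\Vert\hat{C}-f^*\Vert^2_{\mathcal{H}}]$ rather than $(\Delta+B)\mathbb{E}[(\tilde\alpha-\alpha_*)^2]$ and the cross term need not be non-negative for a data-dependent $\tilde\alpha$; the paper, however, also offers no argument for this inequality, so you are no worse off on that point.
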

\begin{proof} Consider
\begin{align*}
    \alpha_{*}-\tilde{\alpha}  
    &= \frac{\Delta}{\Delta+\Vert C -f^{*}\Vert^2_\mathcal{H}}  -  \frac{\hat{\Delta}}{\hat{\Delta}+\Vert \hat{C}-f^{*}\Vert^2_\mathcal{H}}\\
    &=  \frac{\Delta \Vert \hat{C} -f^{*} \Vert_{\mathcal{H}}^2-\hat{\Delta} \Vert C- f^{*}\Vert^2_\mathcal{H}}{(\Delta+\Vert C- f^{*}\Vert^2_\mathcal{H})(\hat{\Delta}+\Vert \hat{C}-f^{*}\Vert^2_\mathcal{H})}\\
    &=  \frac{\Delta \left( \Vert\hat{C} -f^{*}\Vert_{\mathcal{H}}^2 - \Vert C -f^{*}\Vert^2_\mathcal{H}  \right)  + \norm{C-f^*}_{\mathcal{H}}^2 \left( \Delta - \hat{\Delta}  \right) }{(\Delta+\Vert C- f^{*}\Vert^2_\mathcal{H})(\hat{\Delta}+\Vert \hat{C}-f^{*}\Vert^2_\mathcal{H})} \\
    & =  \frac{\alpha_{*} \left( \Vert \hat{C}-f^{*} \Vert_{\mathcal{H}}^2 - \Vert C-f^{*}\Vert^2_{\mathcal{H}} \right) + (1-\alpha_{*}) \left(\Delta-\hat{\Delta} \right)  }{\hat{\Delta}+\Vert \hat{C} -f^{*}\Vert^2_\mathcal{H}}\\
    &=    \frac{\alpha_{*} \left( \Vert\hat{C}-f^{*}\Vert_{\mathcal{H}}^2 - \Vert C-f^{*}\Vert_{\mathcal{H}}^2 \right) + (1-\alpha_{*}) \left(\Delta-\hat{\Delta}  \right)  }{\Delta + \norm{C-f^{*}}_{\mathcal{H}}^2 - \left( \norm{C-f^{*}}_{\mathcal{H}}^2 - \Vert\hat{C}-f^{*}\Vert_{\mathcal{H}}^2 \right) + \left(\hat{\Delta}- \Delta \right)  }
\end{align*}
from which we have
\begin{align}\label{Eq:AlphaTildeMinusAlphaStar}
    \abs{\tilde{\alpha}- \alpha_{*}} &\leq \frac{\alpha_{*} \left|\norm{C-f^{*}}_{\mathcal{H}}^2 - \Vert \hat{C} - f^{*}\Vert_{\mathcal{H}}^2 \right| + (1-\alpha_{*}) \left|\hat{\Delta}- \Delta \right|  }{\Delta + \norm{C-f^{*}}_{\mathcal{H}}^2 - \left| \norm{C-f^{*}}_{\mathcal{H}}^2 - \Vert \hat{C}-f^{*}\Vert_{\mathcal{H}}^2 \right| - \left|\hat{\Delta}- \Delta \right|  }
\end{align}
if \begin{equation}\Delta+\Vert C-f^*\Vert^2_\mathcal{H}>\left| \norm{C-f^{*}}_{\mathcal{H}}^2 - \Vert \hat{C}-f^{*}\Vert_{\mathcal{H}}^2 \right| + \left|\hat{\Delta}- \Delta \right|.\label{Eq:verify}\end{equation} 
\vspace{1mm}\\
(i) Consider 
        \begin{align}\label{Eq:SquaredDeivationC}
            \Vert\hat{C} - C\Vert^2_{\mathcal{H}} &\stackrel{(*)}{\leq} \left[ c_1 \left( \frac{1+\tau}{n}\right)^{a/2} + c_{2} \left( \frac{1+\tau}{n}\right)^{(a+1)/2} \right]^2 
            \stackrel{(**)}{\leq}  d \left( \frac{1+\tau}{n}\right)^{a},
        \end{align}
     for some constant $d$ that doesn't depend on $\tau,n$ and we used \eqref{Eq:C} in $(*)$ and assume $\frac{1+\tau}{n} \leq 1$ in $(**)$. Using Lemma~\ref{lem:split_to_exp_bound} for \eqref{Eq:SquaredDeivationC} yields $\Delta \leq e_1 n^{-a}$, which implies that,
     \begin{align}
         \alpha_{*} &= \frac{\Delta}{ \Delta + \norm{C-f^{*}}_{\mathcal{H}}^2} \leq \frac{\Delta}{\norm{C-f^{*}}_{\mathcal{H}}^2} \leq \frac{e_2}{n^{a}},  \label{Eq:alphastar}
     \end{align}
    for some positive constants $e_1, e_2$ that does not depend on $\tau$ and $n$. Next, $\abs{\Vert C-f^*\Vert_{\mathcal{H}}^2 -\Vert\hat{C}-f^*\Vert_{\mathcal{H}}^2}$ can be bounded as \begin{align*}
     &\abs{\Vert C -f^{*}\Vert_{\mathcal{H}}^2 -\Vert\hat{C} - f^{*}\Vert_{\mathcal{H}}^2} \leq   \Vert\hat{C} - C\Vert_{\mathcal{H}}^2 + 2 \norm{C-f^{*}}_{\mathcal{H}} \Vert\hat{C} - C\Vert_{\mathcal{H}} \\
        &\stackrel{(*)}{\leq} d \left( \frac{1+\tau}{n}\right)^{a}  + 2 \norm{C-f^{*}}_{\mathcal{H}} \left(  c_{1} \left( \frac{1+\tau}{n}\right)^{a/2} + c_{2} \left( \frac{1+\tau}{n}\right)^{(a+1)/2}  \right) 
        \leq f \left(\frac{1+\tau}{n}\right)^{a/2} \numberthis \label{Eq:CSquareDIff},
     \end{align*}
for some positive constant $f$ that does not depend on $\tau$ and $n$, and we used \eqref{Eq:C} and \eqref{Eq:SquaredDeivationC} in $(*)$ along with the assumption that $n\ge \tau+1$. Also note that there exists a constant $g$ such that  
\begin{align*}
    \abs{\hat{\Delta} - \Delta} &\leq  d_1 \left(\frac{1+\tau}{n}\right)^{b/2} + d_2  \left( \frac{1+\tau}{n}\right)^{(b+1)/2} \leq g \left(\frac{1+\tau}{n}\right)^{b/2}.    \numberthis \label{Eq:DeltaDIff} 
\end{align*} 
If $n \geq \max \left\{1, \left( \frac{4f}{\norm{C-f^{*}}_{\mathcal{H}}^2}\right)^{2/a} , \left( \frac{4g}{\norm{C-f^{*}}_{\mathcal{H}}^2} \right)^{2/b} \right\} (1+\tau)$, the denominator in \eqref{Eq:AlphaTildeMinusAlphaStar} can be bounded as
\begin{align*}
    &\Delta + \norm{C-f^{*}}_{\mathcal{H}}^2 - \left| \norm{C-f^{*}}_{\mathcal{H}}^2 - \Vert \hat{C-f^{*}}\Vert_{\mathcal{H}}^2 \right| - \left|\hat{\Delta}- \Delta \right|  \\
    &\geq \norm{C-f^{*}}_{\mathcal{H}}^2 - f \left(\frac{1+\tau}{n}\right)^{\frac{a}{2}} - g \left(\frac{1+\tau}{n}\right)^{\frac{b}{2}}  \\
    &\geq \norm{C-f^{*}}_{\mathcal{H}}^2 - \frac{1}{4} \norm{C-f^{*}}_{\mathcal{H}}^2 - \frac{1}{4} \norm{C-f^{*}}_{\mathcal{H}}^2 \geq \frac{1}{2}\norm{C-f^{*}}_{\mathcal{H}}^2. \numberthis  \label{Eq:Denominator}
\end{align*}
Therefore, using \eqref{Eq:alphastar}--\eqref{Eq:Denominator}
in \eqref{Eq:AlphaTildeMinusAlphaStar}, we obtain
\begin{align*}
    \abs{ \tilde{\alpha} - \alpha_{*} } &
     \leq h \left( \frac{1+\tau}{n}
    \right)^{\min\{3a,b\}/2 }, \numberthis \label{Eq:AlphaTildeMinusAlphaStarBound}
\end{align*}
where $h$ is a constant that does not depend on $\tau$ and $n$, thereby yielding the result.
\vspace{1mm}\\
(ii) We now bound $\abs{ \Vert\hat{C}_{\tilde{\alpha}}-C\Vert_{\mathcal{H}} - \Vert\hat{C}_{\alpha_{*}}-C\Vert_{\mathcal{H}} }$ as
    \begin{align*}
          &\abs{ \Vert\hat{C}_{\tilde{\alpha}}-C\Vert_{\mathcal{H}} - \Vert\hat{C}_{\alpha_{*}}-C\Vert_{\mathcal{H}}}\\ &\leq \Vert\hat{C}_{\alpha_{*}} - \hat{C}_{\tilde{\alpha}}  \Vert_{\mathcal{H}} \leq \abs{\tilde{\alpha} - \alpha_{*}}  \Vert\hat{C}-C\Vert_{\mathcal{H}} + \abs{\tilde{\alpha} - \alpha_{*}}\norm{C-f^{*}}_{\mathcal{H}}  \\
     &\leq\abs{\tilde{\alpha} - \alpha_{*}} \left[ \Vert\hat{C}-C\Vert_{\mathcal{H}} + \norm{C-f^{*}}_{\mathcal{H}} \right] \\
     &\stackrel{\eqref{Eq:AlphaTildeMinusAlphaStarBound}}{\leq}  h \left( \frac{1+\tau}{n} \right)^{\min\{3a,b\}/2 } \left[  c_{1} \left( \frac{1+\tau}{n}\right)^{a/2} + c_{2} \left( \frac{1+\tau}{n}\right)^{(a+1)/2} + \norm{C-f^*}_{\mathcal{H}} \right] \\
     &\leq p \left( \frac{1+\tau}{n} \right)^{\min\{3a,b\}/2 }, \numberthis \label{Eq:BigDeviation}
    \end{align*}
    where $p$ is constant that does not depend on $\tau$ and $n$ and the result follows.
\vspace{1mm}\\
(iii) $\Vert\hat{C}_{\alpha_{*}}-C \Vert_{\mathcal{H}}$ can be bounded as
\begin{align*}
               \Vert\hat{C}_{\alpha_*} - C\Vert_{\mathcal{H}} 
               &= \Vert(1-\alpha_{*}) (\hat{C}-C)+\alpha_*f^* - \alpha_{*} C\Vert_{\mathcal{H}}\\
                &\leq (1-\alpha_{*}) \Vert\hat{C} - C\Vert_{\mathcal{H}} + \alpha_{*} \norm{C-f^{*}}_{\mathcal{H}}\\ 
                &\leq  c_{1} \left( \frac{1+\tau}{n}\right)^{a/2} + c_{2} \left( \frac{1+\tau}{n}\right)^{(a+1)/2} +  \frac{e_2}{n^{a}}\Vert C-f^{*}\Vert_\mathcal{H} \leq q \left( \frac{1+\tau}{n} \right)^{a/2}, \numberthis \label{Eq:HatCAlphaStarMinusC}
    \end{align*}
    where $q$ is constant that does not depend on $\tau$ and $n$. The result follows from \eqref{Eq:BigDeviation} and \eqref{Eq:HatCAlphaStarMinusC} by noting that
   \begin{align*}
       \Vert \hat{C}_{\tilde{\alpha}} - C\Vert_{\mathcal{H}} &\leq  \Vert\hat{C}_{\alpha_{*}} -C\Vert_{\mathcal{H}} + \mathcal{O}_{\mathbb{P}} (n^{-\min\{3a,b \}/2}) \leq  \mathcal{O}_{\mathbb{P}} (n^{-a/2}) + \mathcal{O}_{\mathbb{P}} (n^{-\min\{3a,b \}/2})
   \end{align*}
   as $n \rightarrow \infty$.
\vspace{1mm}\\
(iv) We now bound $\Vert \hat{C}_{\tilde{\alpha}} - C\Vert_{\mathcal{H}}^2 - \Vert\hat{C}_{\alpha_{*}} -C\Vert_{\mathcal{H}}^2$ as
\begin{align*}
        &\Vert \hat{C}_{\tilde{\alpha}} - C\Vert_{\mathcal{H}}^2 - \Vert\hat{C}_{\alpha_{*}} -C\Vert_{\mathcal{H}}^2 \\
         &\leq \left(\Vert\hat{C}_{\tilde{\alpha}} - C\Vert_{\mathcal{H}}- \Vert\hat{C}_{\alpha_{*}} - C\Vert_{\mathcal{H}} \right)^2 +2 \Vert\hat{C}_{\alpha_{*}}-C\Vert_{\mathcal{H}} \left( \Vert\hat{C}_{\tilde{\alpha}} - C\Vert_{\mathcal{H}}- \Vert\hat{C}_{\alpha_{*}} - C\Vert_{\mathcal{H}} \right), \\
         &\leq  \left(p \left( \frac{1+\tau}{n} \right)^{\min\{3a,b\}/2 } \right)^2 +   2q \left( \frac{1+\tau}{n} \right)^{a/2} \left(p \left( \frac{1+\tau}{n} \right)^{\min\{3a,b\}/2 } \right) \\
         &\leq s \left( \frac{1+\tau}{n}\right)^{\min\{4a, a+b, 2b \}/2}, 
    \end{align*}
    where $s$ is a constant that does not depend on $\tau$ and $n.$ The result therefore follows by using Lemma~\ref{lem:split_to_exp_bound}. Finally, note that the assumptions on $n$ and the condition in \eqref{Eq:verify} hold as $n\rightarrow\infty$. 
\end{proof}

    \subsection{Proof of Theorem~\ref{Th:NonDegC-NonDegDelta}} \label{sec:proof-nondeg-nondeg}
Note that 
\begin{align*}
        \Vert\hat{C}-C\Vert_{\mathcal{H}}&= \norm{\text{U}^{n}_k \Big[ r(X_{1},\dots,X_{k}) - \mathbb{E} \left( r(X_{1},\dots,X_{k})\right) \Big]   }_{\mathcal{H}}.
\end{align*}
Using Theorem~\ref{Th:BernUstats} on $r(X_{1},\dots,X_{k}) - \mathbb{E}( r(X_{1},\dots,X_{k})),$ we get that with probability at least $1-\exp(-\tau)$,
\begin{align} \label{eq:nondeg-nondeg-C}
 \Vert\hat{C}-C\Vert_{\mathcal{H}} & \leq 4  \beta \sqrt{k} \left(\frac{1+\tau}{n} \right)^{\frac{1}{2}} + 4  \theta k \left( \frac{1+\tau}{n} \right) = c_1 \left( \frac{1+\tau}{n} \right)^{\frac{1}{2}} + c_{2} \left( \frac{1+\tau}{n} \right),
\end{align}
 where $c_1,c_2 > 0$ are constants that do not depend on $\tau$ and $n$. Now consider 
 \begin{align*}
     &\abs{\hat{\Delta}_{\text{general}}- \Delta} \\&=  \Bigg| \sum_{i=1}^{k} \frac{\Mycomb[k]{i} \Mycomb[n-k]{k-i}}{\Mycomb[n]{k}}\left( \textup{U}_{2k-i}^{n}\left[ \kappa_{2k-i}(X_1,\dots, X_{2k-i}) \right]  -  \textup{U}_{2k}^{n}\left[ \kappa_{2k}(X_1,\dots, X_{2k}) \right]\right)  \\
     &\qquad- \sum_{i=1}^{k} \frac{\Mycomb[k]{i} \Mycomb[n-k]{k-i}}{\Mycomb[n]{k}} \left(\mathbb{E}\left[ \kappa_{2k-i}(X_1,\dots, X_{2k-i}) \right]  -  \mathbb{E}\left[ \kappa_{2k}(X_1,\dots, X_{2k}) \right] \right) \Bigg| \\
     &\stackrel{(*)}{\leq} \spadesuit,
\end{align*}
where we used Vandermonde's identity in $(*)$ and 
\begin{align*}
\spadesuit&:=\sum_{i=1}^{k} \frac{\Mycomb[k]{i} \Mycomb[n-k]{k-i}}{\Mycomb[n]{k}} \Bigg| \textup{U}_{2k-i}^{n}\Big[ \kappa_{2k-i}(X_1,\dots, X_{2k-i}) -  \mathbb{E}\left[ \kappa_{2k-i}(X_1,\dots, X_{2k-i}) \right] \Big]  \Bigg| \\
     & \qquad  +  \left|\frac{\Mycomb[n-k]{k}}{\Mycomb[n]{k}} - 1\right| \Bigg| \textup{U}_{2k}^{n}\Big[ \kappa_{2k}(X_1,\dots, X_{2k}) -  \mathbb{E}\left[ \kappa_{2k}(X_1,\dots, X_{2k}) \right] \Big] \Bigg|. 
\end{align*}
Now applying Theorem~\ref{Th:BernUstats} to $$\kappa_{2k-i}(X_1,\dots, X_{2k-i}) -  \mathbb{E}\left[ \kappa_{2k-i}(X_1,\dots, X_{2k-i}) \right]$$ for each $i \in \{0,1,\dots,k\}$, we have that with probability at least $1 - (k+1) \exp{(-\tau)}$,
\begin{align*}
\spadesuit &\leq \sum_{i=1}^{k}  \frac{\Mycomb[k]{i} \Mycomb[n-k]{k-i}}{\Mycomb[n]{k}}  \left[   4 \beta_{i} \sqrt{2k-i} \left(\frac{1+\tau}{n} \right)^{\frac{1}{2}} + 4  \theta_{i} (2k-i) \left( \frac{1+\tau}{n} \right) \right] \\
&\qquad + \left|\frac{\Mycomb[n-k]{k}}{\Mycomb[n]{k}} - 1\right|  \left[   4 \beta_{2k} \sqrt{2k} \left(\frac{1+\tau}{n} \right)^{\frac{1}{2}} + 4  \theta_{2k} (2k) \left( \frac{1+\tau}{n} \right) \right]\\
&\leq c_{3}\left[ \sqrt{2k} \left(\frac{1+\tau}{n} \right)^{\frac{1}{2}} +  2k \left( \frac{1+\tau}{n} \right) \right] \left[ \sum_{i=1}^{k}\frac{\Mycomb[k]{i} \Mycomb[n-k]{k-i}}{\Mycomb[n]{k}} + \left|\frac{\Mycomb[n-k]{k}}{\Mycomb[n]{k}} - 1\right|  \right]\\
&\stackrel{(*)}{=} c_{3}\left[ \sqrt{2k} \left(\frac{1+\tau}{n} \right)^{\frac{1}{2}} +  2k \left( \frac{1+\tau}{n} \right) \right] \left[ 1 - \frac{\Mycomb[n-k]{k}}{\Mycomb[n]{k}}  + \left|\frac{\Mycomb[n-k]{k}}{\Mycomb[n]{k}} - 1\right|  \right] \\
&\stackrel{(**)}{=}   2c_{3}\left[ \sqrt{2k} \left(\frac{1+\tau}{n} \right)^{\frac{1}{2}} +  2k \left( \frac{1+\tau}{n} \right) \right] \left[ 1 - \frac{\Mycomb[n-k]{k}}{\Mycomb[n]{k}} \right] \\
&\stackrel{(\dagger)}{=}   2c_{3}\left[ \sqrt{2k} \left(\frac{1+\tau}{n} \right)^{\frac{1}{2}} +  2k \left( \frac{1+\tau}{n} \right) \right] \left[ \frac{n^{k}- (n-2k)^{k}}{n^k} \right]\\
&\stackrel{(\ddagger)}{\leq}  2c_{3}\left[ \sqrt{2k} \left(\frac{1+\tau}{n} \right)^{\frac{1}{2}} +  2k \left( \frac{1+\tau}{n} \right) \right] \left[\frac{2n^{k-1}k^2}{n^{k}} \right]\\ 
&\leq c_4 \left( \frac{1+\tau}{n} \right)^{3/2}+ c_5 \left( \frac{1+\tau}{n} \right)^{2}, \numberthis \label{eq:nondeg-nondeg-var}
\end{align*}
where we used vandermonde's identity that $ \sum_{i=0}^{k} \frac{\Mycomb[k]{i} \Mycomb[n-k]{k-i}}{\Mycomb[n]{k}} = 1$ in $(*)$ and  $\frac{\Mycomb[n-k]{k}}{\Mycomb[n]{k}} < 1$ in $(**)$ , $\Mycomb[n]{k} \leq n^{k}$, and $\Mycomb[n-k]{k} \geq (n-2k)^{k}$ in $(\dagger)$. 
In $(\ddagger)$, we used $0<b<a \implies a^k-b^k \leq  k a^{k-1} (a-b)$. 
Now applying Theorem~\ref{centralTheorem}  with $a =1$ (see \eqref{eq:nondeg-nondeg-C}) and $b = 3$ (see \eqref{eq:nondeg-nondeg-var}), the result follows.
    
\subsection{Proof of Theorem~\ref{Th:DegC-DegDelta} }\label{sec:proof-deg-deg}
Using Theorem~\ref{Th:BernUstatsDeg} on 
$r(X_{1},\dots,X_{k}) - \mathbb{E}( r(X_{1},\dots,X_{k}))$ yields that with probability at least $1-\tilde{a} \exp{(-\tau)}$, 
\begin{align}\label{Eq:deg-deg-C}
\Vert\hat{C}-C\Vert_{\mathcal{H}} & \leq qk^k \left( \frac{\tau}{na'} \right)^{k/2} + Mk^k  \left( \frac{\tau}{na''} \right)^{(k+1)/2},
\end{align}
where $\tilde{a},a'$ and $a''$ are positive constants, and $q=(\theta+\sigma^2+\theta^2M^{-1})$ with $\sigma^2=\mathbb{E}\Vert r(X_1,\ldots,X_k)\Vert^2_\mathcal{H}-\Vert C\Vert^2_\mathcal{H}$. Therefore,
\begin{align*}
\abs{\hat{\Delta}_{\text{degen}} -\Delta} &= \Bigg| (\Mycomb[n]{k})^{-1} \Big[\textup{U}_{k}^{n}\big[ \kappa_{k}(X_1,\dots, X_{k})\big] -  \textup{U}_{2k}^{n}\big[ \kappa_{2k}(X_1,\dots, X_{2k}) \big] \Big] \\
& \qquad- (\Mycomb[n]{k})^{-1} \Big[\mathbb{E}\big[ \kappa_{k}(X_1,\dots, X_{k})\big] -  \mathbb{E}\big[ \kappa_{2k}(X_1,\dots, X_{2k}) \big] \Big]   \Bigg| \\
&\leq \Bigg| (\Mycomb[n]{k})^{-1} \textup{U}_{k}^{n} \Big[ \kappa_{k}(X_1,\dots, X_{k}) - \mathbb{E}\left[\kappa_{k}(X_1,\dots, X_{k})\right]\Big]  \Bigg|  \\
& \qquad +  \Bigg| (\Mycomb[n]{k})^{-1} \textup{U}_{2k}^{n} \Big[ \kappa_{2k}(X_1,\dots, X_{2k}) - \mathbb{E}\left[\kappa_{2k}(X_1,\dots, X_{2k})\right]\Big]  \Bigg| \\
&=:\spadesuit.
\end{align*}
Now, using Theorem~\ref{Th:BernUstats} for $$ \kappa_{k}(X_1,\dots, X_{k}) - \mathbb{E}\left[\kappa_{k}(X_1,\dots, X_{k})\right]\,\,\text{and}\,\, \kappa_{2k}(X_1,\dots, X_{2k}) - \mathbb{E}\left[\kappa_{2k}(X_1,\dots, X_{2k})\right],$$ 
we obtain that with probability at least $1-2e^{-\tau}$,
\begin{align*}
\spadesuit & \leq    (\Mycomb[n]{k})^{-1}  \left[  4 (\sigma_1+\sqrt{2}\sigma_2) \sqrt{k} \left(\frac{1+\tau}{n} \right)^{\frac{1}{2}} + 4  (\theta_1+2\theta_2) k \left( \frac{1+\tau}{n} \right)\right]\\
&\leq c_1 \left(\frac{1+\tau}{n}\right)^{\frac{2k+1}{2}} + c_{2} \left(\frac{1+\tau}{n}\right)^{\frac{2k+2}{2}} \numberthis, \label{Eq:deg-deg-var}
\end{align*}
where $c_{1}$ and $c_{2}$ are positive constants that do not depend on $\tau$ and $n$. Now applying Theorem~\ref{centralTheorem} with $a = k$ (see \eqref{Eq:deg-deg-C}) and $b = 2k+1$ (see \eqref{Eq:deg-deg-var}) and noting that $\min\{3a , b \} = \min\{3k , 2k+1\} = 2k+1$, $\min\{2b, (a+b), 4a \} = \min \{4k+2,3k+1, 4k\} = 3k+1$, yields the result.

  \subsection{Proof of Theorem \ref{Th:NonDegC-DegDelta} } \label{sec:proof-nondeg-deg}
Applying Theorem~\ref{Th:BernUstats} on $r(X_{1},\dots,X_{k}) - \mathbb{E}( r(X_{1},\dots,X_{k})),$ yields that with probability at least $1-\exp(-\tau)$,
 \begin{align*} 
             \Vert\hat{C}-C\Vert_{\mathcal{H}} & \leq 4\sigma \sqrt{k} \left(\frac{1+\tau}{n} \right)^{\frac{1}{2}} + 4  \theta k \left( \frac{1+\tau}{n} \right)\le c_1\left(\frac{1+\tau}{n} \right)^{\frac{1}{2}},
         \end{align*}
where the second inequality holds for $n\ge \tau+1$. Hence, it follows from
Lemma~\ref{lem:split_to_exp_bound} that
        \begin{align} \label{Eq:nondeg-deg-Var}
            \Delta &= \mathbb{E} \Vert\hat{C}-C\Vert_{\mathcal{H}}^2 \leq  \frac{e_1}{n},
        \end{align}
        for some positive constant $e_1$. 
Therefore,         
\begin{align*}
    &\abs{ \Delta -  \hat{\Delta}_{\text{degen}}}\\
    &= \Bigg|\Delta -  (\Mycomb[n]{k})^{-1}  \left[\textup{U}_{k}^{n}\big[ \kappa_{k}(X_1,\dots, X_{k})\big] -  \textup{U}_{2k}^{n}\big[ \kappa_{2k}(X_1,\dots, X_{2k})  \big]\right] \Bigg| \\
    &= \Bigg|\Delta - (\Mycomb[n]{k})^{-1} \sigma_{k}^2 + (\Mycomb[n]{k})^{-1} \sigma_{k}^2\\
    &\qquad\qquad-   (\Mycomb[n]{k})^{-1}  \left[\textup{U}_{k}^{n}\big[ \kappa_{k}(X_1,\dots, X_{k})\big] -  \textup{U}_{2k}^{n}\big[ \kappa_{2k}(X_1,\dots, X_{2k})\big]  \right] \Bigg| \\
    &\leq \Bigg|  \Delta - (\Mycomb[n]{k})^{-1} \sigma_{k}^2\Bigg| + (\Mycomb[n]{k})^{-1} \Bigg| \textup{U}_{k}^{n}\Big[ \kappa_{k}(X_1,\dots, X_{k})- \mathbb{E}[\kappa_{k}(X_1,\dots, X_{k})] \Big]  \Bigg|  \\
    &\qquad +  (\Mycomb[n]{k})^{-1} \Bigg| \textup{U}_{2k}^{n}\Big[ \kappa_{2k}(X_1,\dots, X_{2k})- \mathbb{E}[\kappa_{2k}(X_1,\dots, X_{2k})] \Big]  \Bigg|  \\
    &=:\spadesuit.
\end{align*}
Now, using Theorem~\ref{Th:BernUstats} on $$ \kappa_{k}(X_1,\dots, X_{k}) - \mathbb{E}\left[\kappa_{k}(X_1,\dots, X_{k})\right],\,\,\text{and}\,\, \kappa_{2k}(X_1,\dots, X_{2k}) - \mathbb{E}\left[\kappa_{2k}(X_1,\dots, X_{2k})\right],$$ 
we obtain that with probability at least $1-2e^{-\tau}$,
\begin{align*}
\spadesuit &\leq \Big| \Delta- (\Mycomb[n]{k})^{-1} \sigma_{k}^2 \Big| +  (\Mycomb[n]{k})^{-1}  \left[  4 \sigma_1 \sqrt{k} \left(\frac{1+\tau}{n} \right)^{\frac{1}{2}} + 4  \theta_1 k \left( \frac{1+\tau}{n} \right)    \right] \\
 & \qquad+ (\Mycomb[n]{k})^{-1}  \left[  4 \sigma_2 \sqrt{2k} \left(\frac{1+\tau}{n} \right)^{\frac{1}{2}} + 8  \theta_2 k \left( \frac{1+\tau}{n} \right)    \right] \\
 &\leq  \Big| \Delta- (\Mycomb[n]{k})^{-1} \sigma_{k}^2 \Big|+c'(\Mycomb[n]{k})^{-1}\left(\frac{1+\tau}{n}\right)^{1/2}\\
 &\leq  \Big| \Delta-  (\Mycomb[n]{k})^{-1}\sigma_{k}^2 \Big|+c'\left(\frac{k}{n}\right)^k\left(\frac{1+\tau}{n}\right)^{1/2}\\
  &\leq \begin{cases}
  c'\left(\frac{1}{n}\right)\left(\frac{1+\tau}{n}\right)^{1/2}, & k = 1\\ 
 \max {{\{ \Delta, (\Mycomb[n]{k})^{-1} \sigma_{k}^2   \}}}+c'\left(\frac{k}{n}\right)^k\left(\frac{1+\tau}{n}\right)^{1/2},&  k > 1
 \end{cases} 
  \leq \begin{cases}
  c'\left(\frac{1+\tau}{n}\right)^{3/2}, & k = 1\\ 
  c''\left(\frac{1+\tau}{n}\right),&  k > 1
 \end{cases},
\end{align*}
where $c',c''>0$ are constants that do not depend on $\tau$ and $n$, and we used \eqref{Eq:nondeg-deg-Var} in the above inequality. Now applying Theorem~\ref{centralTheorem} with $a=1$ and $b=2$ (for $k>1$) yields the result.

   \subsection{Proof of Theorem~\ref{Thm:shrinkage}}\label{subsec:gauss}
Define $\hat{\alpha}:=\frac{\frac{S^2}{n}}{\frac{S^2}{n}+\Vert \bar{X}\Vert^2_2}$ so that $\check{\mu}=(1-\hat{\alpha})\bar{X}$. Define $W:=\bar{X}\sim N(\mu,\frac{\sigma^2}{n}I)$ and $U:=\frac{S^2}{n}$.
Consider
\begin{eqnarray}
\mathbb{E}\Vert \hat{\mu}-\mu\Vert^2_2-\mathbb{E}\Vert\check{\mu}-\mu\Vert^2_2&{}={}&\mathbb{E}\left[\Vert \bar{X}-\mu\Vert^2_2-\Vert (\bar{X}-\mu)-\hat{\alpha}\bar{X}\Vert^2_2\right]\nonumber\\
&{}={}&\mathbb{E}\left[2\hat{\alpha}\left\langle \bar{X}-\mu,\bar{X}\right\rangle_2-\Vert \hat{\alpha}\bar{X}\Vert^2_2\right]\nonumber\\
&{}={}& \mathbb{E}\left[2\left\langle W-\mu,\frac{UW}{U+\Vert W\Vert^2_2}\right\rangle_2-\left\Vert\frac{UW}{U+\Vert W\Vert^2_2}\right\Vert^2_2\right].\label{Eq:diff}
\end{eqnarray}
Note that
$$\left\langle W-\mu,\frac{UW}{U+\Vert W\Vert^2_2}\right\rangle_2=\sum^d_{i=1}(W_i-\mu_i)\left(\frac{UW_i}{U+\sum_i W^2_i}\right)$$
where $W_i\sim N(\mu_i,\sigma^2/n)$. By partial integration, we have
\begin{align*}\mathbb{E}\left[(W_i-\mu_i)\left(\frac{UW_i}{U+\sum_i W^2_i}\right)\right]&=\frac{\sigma^2}{n}\mathbb{E}\left[\frac{d}{dW_i}\left(\frac{UW_i}{U+\sum_i W^2_i}\right)\right]\nonumber\\
&=\frac{\sigma^2}{n}\mathbb{E}\left[\frac{U}{U+\Vert W\Vert^2_2}-\frac{2UW^2_i}{(U+\Vert W\Vert^2_2)^2}\right]
\end{align*}
and therefore
\begin{equation}
\mathbb{E}\left[(W-\mu)^T\left(\frac{UW}{U+\Vert W\Vert^2_2}\right)\right]=\frac{\sigma^2}{n}\mathbb{E}\left[\frac{dU}{U+\Vert W\Vert^2_2}-\frac{2U\Vert W\Vert^2_2}{(U+\Vert W\Vert^2_2)^2}\right].
\label{Eq:partial}
\end{equation}
Using (\ref{Eq:partial}) in (\ref{Eq:diff}), we have
\begin{equation}
\mathbb{E}\Vert \hat{\mu}-\mu\Vert^2_2-\mathbb{E}\Vert\check{\mu}-\mu\Vert^2_2=\mathbb{E}\left[\frac{2d\sigma^2 U}{n(U+\Vert W\Vert^2_2)}-\frac{4\sigma^2 U\Vert W\Vert^2_2}{n(U+\Vert W\Vert^2_2)^2}-\frac{U^2\Vert W\Vert^2_2}{(U+\Vert W\Vert^2_2)^2}\right].
\label{Eq:step1}
\end{equation}
Note that $\Vert W\Vert^2_2\sim \frac{\sigma^2}{n}\chi^2_d(\lambda)$ where $\lambda=\frac{n\Vert\mu\Vert^2_2}{\sigma^2}$ with 
$\chi^2_d(\lambda)$ denoting a non-central $\chi^2$ distribution with $d$ degrees of freedom and $\lambda$ being the non-centrality parameter. Also note that $\frac{(n-1)S^2}{\sigma^2}\sim\chi^2_{(n-1)d}$ with
$S^2$ being independent of $W$. Define $Z:=\frac{n\Vert W\Vert^2_2}{\sigma^2}\sim\chi^2_d(\lambda)$ and $Y:=\frac{n(n-1)U}{\sigma^2}\sim \chi^2_{(n-1)d}$ where $Y$ and $Z$ are independent. Then (\ref{Eq:step1})
reduces to
\begin{eqnarray}
\mathbb{E}\Vert \hat{\mu}-\mu\Vert^2_2-\mathbb{E}\Vert\check{\mu}-\mu\Vert^2_2&{}={}&\frac{\sigma^2}{n}\mathbb{E}\left[\frac{2d Y}{Y+(n-1)Z}-\frac{4(n-1) YZ}{(Y+(n-1)Z)^2}-\frac{ Y^2Z}{(Y+(n-1)Z)^2}\right]\nonumber\\
&{}={}&\frac{\sigma^2}{n}\mathbb{E}\left[\frac{(2d-Z)Y^2+YZ(n-1)(2d-4)}{(Y+(n-1)Z)^2}\right].\label{Eq:step2}
\end{eqnarray}
Using the fact that $\int^\infty_0 te^{-at}\,dt=\frac{1}{a^2}$ for $a>0$ and employing Fubini's theorem, we have
\begin{equation}
\mathbb{E}\left[\frac{YZ}{(Y+(n-1)Z)^2}\right]= \int^\infty_0 t\mathbb{E}\left[Ye^{-tY}\right]\mathbb{E}\left[Ze^{-t(n-1)Z}\right]\,dt\label{Eq:fubini-1}
\end{equation}
and 
\begin{equation}
\mathbb{E}\left[\frac{Y^2(2d-Z)}{(Y+(n-1)Z)^2}\right]= \int^\infty_0 t\mathbb{E}\left[Y^2e^{-tY}\right]\mathbb{E}\left[(2d-Z)e^{-t(n-1)Z}\right]\,dt.\label{Eq:fubini-2} 
\end{equation}
To compute the above expectations, we require the following: for any $t>0$,
\begin{itemize}
 \item $\mathbb{E}\left[e^{-tY}\right]= (1+2t)^{-\frac{(n-1)d}{2}},$\vspace{1.5mm}
 \item $\mathbb{E}\left[Ye^{-tY}\right]=-\frac{d}{dt}\mathbb{E}\left[e^{-tY}\right]=\frac{(n-1)d}{1+2t}E\left[e^{-tY}\right]$,\vspace{1.5mm}
 \item \begin{align*}\mathbb{E}\left[Y^2e^{-tY}\right]=\frac{d^2}{dt^2}\mathbb{E}\left[e^{-tY}\right]&=\frac{(n-1)d}{1+2t}\left(\mathbb{E}\left[Ye^{-tY}\right]+\frac{2\mathbb{E}\left[e^{-tY}\right]}{1+2t}\right)\\
 &=\frac{(n-1)^2d^2+2(n-1)d}{(1+2t)^2}\mathbb{E}\left[e^{-tY}\right],\end{align*}
 \item $\mathbb{E}\left[e^{-t(n-1)Z}\right]=(1+2(n-1)t)^{-\frac{d}{2}}\exp\left(-\frac{\lambda(n-1)t}{1+2(n-1)t}\right),$\vspace{1.5mm}
 \item \begin{align*}\mathbb{E}\left[Ze^{-t(n-1)Z}\right]&=-\frac{1}{n-1}\frac{d}{dt}\mathbb{E}\left[e^{-t(n-1)Z}\right]\\&=\left(\frac{d}{1+2(n-1)t}+\frac{\lambda}{(1+2(n-1)t)^2}\right)\mathbb{E}\left[e^{-t(n-1)Z}\right].\end{align*}
\end{itemize}
Therefore, (\ref{Eq:fubini-1}) and (\ref{Eq:fubini-2}) reduce to
\begin{eqnarray}
&&\mathbb{E}\left[\frac{YZ}{(Y+(n-1)Z)^2}\right]\nonumber\\
&{}={}& \int^\infty_0 t\mathbb{E}\left[Ye^{-tY}\right]\mathbb{E}\left[Ze^{-t(n-1)Z}\right]\,dt\nonumber\\
&{}={}&(n-1)d\int^\infty_0 \frac{t}{1+2t}\left(\frac{d}{1+2(n-1)t}+\frac{\lambda}{(1+2(n-1)t)^2}\right)\mathbb{E}\left[e^{-tY}\right]\mathbb{E}\left[e^{-t(n-1)Z}\right]\,dt\nonumber\\
&{}={}&d\int^\infty_0 \frac{a}{n-1+2a}\left(\frac{d}{1+2a}+\frac{\lambda}{(1+2a)^2}\right)\mathbb{E}\left[e^{-\frac{aY}{n-1}}\right]\mathbb{E}\left[e^{-aZ}\right]\,da\label{Eq:term2}
\end{eqnarray}
and 
\begin{eqnarray}
&{}{}&\mathbb{E}\left[\frac{Y^2(2d-Z)}{(Y+(n-1)Z)^2}\right]
= \int^\infty_0 t\mathbb{E}\left[Y^2e^{-tY}\right]\mathbb{E}\left[(2d-Z)e^{-t(n-1)Z}\right]\,dt\nonumber\\
&{}={}&d(n-1)((n-1)d+2)\int^\infty_0 \frac{t}{(1+2t)^2}\left(2d-\frac{d}{1+2(n-1)t}-\frac{\lambda}{(1+2(n-1)t)^2}\right)\nonumber\\
&{}{}&\qquad\qquad\qquad\qquad\qquad\qquad\qquad\times\mathbb{E}\left[e^{-tY}\right]\mathbb{E}\left[e^{-t(n-1)Z}\right]\,dt\nonumber\\
&{}={}&d(n-1)((n-1)d+2)\int^\infty_0 \frac{a}{(n-1+2a)^2}\left(2d-\frac{d}{1+2a}-\frac{\lambda}{(1+2a)^2}\right)\nonumber\\
&{}{}&\qquad\qquad\qquad\qquad\qquad\qquad\qquad\times\mathbb{E}\left[e^{-\frac{aY}{n-1}}\right]\mathbb{E}\left[e^{-aZ}\right]\,da.\label{Eq:term1}
\end{eqnarray}
Using (\ref{Eq:term2}) and (\ref{Eq:term1}) in (\ref{Eq:step2}), we obtain
\begin{eqnarray}
\mathbb{E}\Vert \hat{\mu}-\mu\Vert^2_2-\mathbb{E}\Vert\check{\mu}-\mu\Vert^2_2
&{}={}&\frac{d(n-1)\sigma^2}{n}\Bigg[\int^\infty_0 \Bigg(\frac{((n-1)d+2)a}{(2a+n-1)^2}\left(2d-\frac{d}{1+2a}-\frac{\lambda}{(1+2a)^2}\right)\nonumber\\
&{}{}&\qquad+\frac{(2d-4)a}{2a+n-1}\left(\frac{d}{1+2a}+\frac{\lambda}{(1+2a)^2}\right)\Bigg)\mathbb{E}\left[e^{-\frac{aY}{n-1}}\right]\mathbb{E}\left[e^{-aZ}\right]\,da\Bigg]\nonumber\\
&{}={}&\int^\infty_0\frac{d(n-1)\sigma^2a}{n(2a+n-1)^2}\mathcal{B}(a,\lambda)(1+2a)^{-\frac{d}{2}-2}\mathbb{E}\left[e^{-\frac{aY}{n-1}}\right]\,da,\nonumber
\end{eqnarray}
with
\begin{eqnarray}
\mathcal{B}(a,\lambda)
&{}:={}& \Big((nd-d+2)\left(2d(1+2a)^2-d(1+2a)-\lambda\right)+(2d-4)(2a+n-1)(d+2ad+\lambda)\Big)\nonumber\\
&{}{}&\qquad\qquad\times e^{-\frac{a\lambda}{1+2a}}\nonumber\\
&{}={}&\Big((nd-d+2)(d(8a^2+6a+1)-\lambda)+(2d-4)(2a+n-1)(d+2ad+\lambda)\Big)e^{-\frac{a\lambda}{1+2a}}\nonumber\\
&{}=:{}&(\theta_1+\theta_2\lambda)e^{-\frac{a\lambda}{1+2a}},\nonumber
\end{eqnarray}
where for all $a\in [0,\infty)$, $$\theta_1:=d(nd-d+2)(8a^2+6a+1)+d(2d-4)(2a+n-1)(1+2a)>0\,\,\text{for}\,\,d\ge 2,$$ and
$$\theta_2:=(2d-4)(2a+n-1)-(n-1)d-2=4a(d-2)+(n-1)(d-4)-2\ge 0$$ if $d\ge \sup_{a}\frac{8a+2+4(n-1)}{4a+n-1}=4+\frac{2}{n-1}$.
This means for $d\ge 4+\frac{2}{n-1},\,n\ge 2$, $\mathcal{B}(a,\lambda)>0$ for all $\lambda$ and $a\in[0,\infty)$ and the result follows.
\subsection{Proof of Theorem~\ref{Thm:family}}\label{Subsec:family}
Proceeding as in the proof of Theorem~\ref{Thm:shrinkage}, we obtain
\begin{eqnarray}
\mathbb{E}\Vert \hat{\mu}-\mu\Vert^2_2-\mathbb{E}\Vert\check{\mu}_c-\mu\Vert^2_2
&{}={}&\frac{c\sigma^2}{n}\mathbb{E}\left[\frac{(2d-cZ)Y^2+YZ(n-1)(2d-4)}{(Y+(n-1)Z)^2}\right]\nonumber\\
&{}={}&\int^\infty_0\frac{dc(n-1)\sigma^2a}{n(2a+n-1)^2}\mathcal{A}(a,\lambda)(1+2a)^{-\frac{d}{2}-2}\mathbb{E}\left[e^{-\frac{aY}{n-1}}\right]\,da,\nonumber
\end{eqnarray}
where \begin{eqnarray}
&{}{}&\mathcal{A}(a,\lambda)\nonumber\\
&{}:={}& \Big((nd-d+2)\left(2d(1+2a)^2-cd(1+2a)-c\lambda\right)+(2d-4)(2a+n-1)(d+2ad+\lambda)\Big)e^{-\frac{a\lambda}{1+2a}}\nonumber\\
&{}={}&\Big((nd-d+2)(d(8a^2+8a-2ac+2-c)-c\lambda)+(2d-4)(2a+n-1)(d+2ad+\lambda)\Big)e^{-\frac{a\lambda}{1+2a}}\nonumber\\
&{}=:{}&(\theta_3+\theta_4\lambda)e^{-\frac{a\lambda}{1+2a}}\nonumber
\end{eqnarray}
with $$\theta_3:=d(nd-d+2)(8a^2+8a-2ac+2-c)+d(2d-4)(2a+n-1)(1+2a)>0$$ for $d\ge 2$, $c\in[0,2)$ and all $a\in [0,\infty),$ and
$$\theta_4:=(2d-4)(2a+n-1)-(n-1)dc-2c=4a(d-2)+(n-1)(2d-4-dc)-2c\ge 0$$ for $d\ge \frac{4}{2-c}+\frac{2c}{(n-1)(2-c)}$, $n\ge 2$, $c\in(0,2)$ and all $a\in [0,\infty)$. This means, for the choice of $n$, $c$ and $d$ in the statement of Theorem~\ref{Thm:family}, the result follows.

\section*{Acknowledgements}
BKS is partially supported by NSF
CAREER Award DMS-1945396. BKS thanks Donald Richards for helpful comments on the proof of Theorem~\ref{Thm:shrinkage}.

\bibliography{bibolography.bib}
\bibliographystyle{apalike}
\appendix
\makeatletter 
\begin{appendices}\label{appendix}
\numberwithin{equation}{section}
\section{Supplementary Results}
In this section, we collect the technical results needed to prove the results of Section~\ref{Sec:main results} of the manuscript.

    
    
    \begin{appxlem}\label{lem:split_to_exp_bound}
        Let $X$ be a random variable. Let $n \in \mathbb{N}$ and suppose there exists a constant $d>0$ that does not depend on $\tau$ and $n$ such that $\forall \tau >0$, with probability at least $1-he^{-\tau}$, $\abs{X} \leq d \left( \frac{1+\tau}{n} \right)^{c}$. Then there exists a non negative constant $g$ such that $\mathbb{E} \abs{X} \leq \frac{g}{n^{c}}$.
    \end{appxlem}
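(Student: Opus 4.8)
The plan is to pass from the stated high-probability bound to a bound on $\mathbb{E}\abs{X}$ via the layer-cake (tail-integral) representation
$$\mathbb{E}\abs{X} = \int_0^\infty \mathbb{P}\left(\abs{X} > t\right)\,dt.$$
The hypothesis is phrased as a family of events indexed by $\tau$, so the first step is to reinterpret it as a genuine tail bound on $\abs{X}$: for each $\tau>0$ the event $\{\abs{X} \le d((1+\tau)/n)^c\}$ has probability at least $1-he^{-\tau}$, which is exactly the statement that $\mathbb{P}\left(\abs{X} > d((1+\tau)/n)^c\right) \le he^{-\tau}$.

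Next I would invert the relationship between the threshold $t$ and the deviation parameter $\tau$. Setting $t = d((1+\tau)/n)^c$ and solving gives $\tau = n(t/d)^{1/c} - 1$, which is admissible (i.e.\ satisfies $\tau>0$) precisely when $t > d/n^c$. Substituting this value of $\tau$ into the tail bound yields, for all $t > d/n^c$,
$$\mathbb{P}\left(\abs{X}>t\right) \le h e^{-\tau} = he\,\exp\!\left(-n(t/d)^{1/c}\right).$$
For the remaining small values $t \le d/n^c$, where the exponential bound is not available, I would simply use the trivial bound $\mathbb{P}(\abs{X}>t) \le 1$.

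Finally I would split the tail integral at $t = d/n^c$. The lower piece contributes at most $\int_0^{d/n^c} 1\,dt = d/n^c$. For the upper piece I would apply the change of variables $u = n(t/d)^{1/c}$, under which $dt = (dc/n^c)\,u^{c-1}\,du$ and the lower limit becomes $u=1$; this turns the integral into $(he\,dc/n^c)\int_1^\infty u^{c-1}e^{-u}\,du$, which is bounded above by $(he\,d/n^c)\,\Gamma(c+1)$ using $c\,\Gamma(c)=\Gamma(c+1)$ and extending the lower limit to $0$. Adding the two pieces gives $\mathbb{E}\abs{X} \le g/n^c$ with $g = d(1 + he\,\Gamma(c+1))$, a constant independent of $n$ and $\tau$, as required. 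The only point demanding care is the bookkeeping of the validity range $t>d/n^c$ for the exponential tail bound and verifying that the resulting constant $g$ genuinely does not depend on $n$; the change of variables is precisely what makes the $n^{-c}$ scaling explicit and is the crux of the argument.
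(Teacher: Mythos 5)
Your proof is correct and follows essentially the same route as the paper: both use the layer-cake representation $\mathbb{E}\abs{X}=\int_0^\infty \mathbb{P}(\abs{X}>t)\,dt$, invert $t=d((1+\tau)/n)^c$ to get the exponential tail bound, and change variables to extract the $n^{-c}$ factor via a Gamma integral. Your only (minor, and slightly more careful) deviation is splitting the integral at $t=d/n^c$ and using the trivial bound $\mathbb{P}(\abs{X}>t)\le 1$ below that threshold, whereas the paper applies the bound $he^{1-n(t/d)^{1/c}}$ over the whole range.
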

    \begin{proof}
        Let $\epsilon = d \left( \frac{1+\tau}{n} \right)^{c} \implies \tau = n \left(\frac{\epsilon}{d}\right)^{\frac{1}{c}} - 1$. Then
        \begin{align*}
            \mathbb{E}[\abs{X}] = \int_{0}^{\infty} P[\abs{X} > \epsilon] \, d \epsilon &\leq \int_{0}^{\infty} h \exp \left[ 1 - n \left(\frac{\epsilon}{d}\right)^{\frac{1}{c}}  \right] \, d \epsilon  \leq \frac{echd}{n^{c}} \int_{0}^{\infty} t^{c-1} \exp{(-t)}\, dt,
        \end{align*}
        yielding the result.
    \end{proof}
In rest of the section, $\mathcal{H}$ refers to a separable Hilbert space and $X_1,\ldots,X_n$ are independent $\mathcal{H}$-valued random elements defined on a measurable space. 
  \begin{appxthm}[\citealp{yurinsky2006sums}, Theorem 3.3.1(b)] For  any $t>0$,
        \begin{align*}
            \mathbb{E} \left[ \exp{\norm{ t \sum_{i=1}^n X_i}_{\mathcal{H}}} \right] &\leq  \exp{ \left[  \mathbb{E} \norm{t \sum_{i=1}^n X_i}_{\mathcal{H}} \right]   } \exp{ \left[ \sum_{i=1}^n \mathbb{E} e^{t \norm{X_i}_{\mathcal{H}} } - 1 - t \mathbb{E}\norm{X_i}_{\mathcal{H}} \right] }.
        \end{align*}
        \label{Thm:Yurinsky}
  \end{appxthm}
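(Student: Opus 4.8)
The plan is to prove this exponential-moment bound by a tensorization argument built on the Doob martingale of $\Vert S\Vert_{\mathcal{H}}$, where $S = \sum_{i=1}^n X_i$. Set $\mathcal{F}_k = \sigma(X_1,\dots,X_k)$ and $Z_k = \mathbb{E}[\Vert S\Vert_{\mathcal{H}} \mid \mathcal{F}_k]$, so that $Z_0 = \mathbb{E}\Vert S\Vert_{\mathcal{H}}$ is deterministic and $Z_n = \Vert S\Vert_{\mathcal{H}}$. First I would peel off the conditional expectations one coordinate at a time, writing $\mathbb{E}\,e^{tZ_n} = \mathbb{E}\big[e^{tZ_{n-1}}\,\mathbb{E}[e^{t(Z_n - Z_{n-1})}\mid \mathcal{F}_{n-1}]\big]$ and iterating. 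If each conditional moment generating function of the martingale increment is bounded by a deterministic constant $C_k$, then $\mathbb{E}\,e^{t\Vert S\Vert_{\mathcal{H}}} \le e^{tZ_0}\prod_{k=1}^n C_k$, which is exactly the claimed form once $C_k = \exp(\mathbb{E}\,e^{t\Vert X_k\Vert_{\mathcal{H}}} - 1 - t\,\mathbb{E}\Vert X_k\Vert_{\mathcal{H}})$, since $Z_0 = \mathbb{E}\Vert S\Vert_{\mathcal{H}} = \mathbb{E}\Vert t S\Vert_{\mathcal{H}}/t$.

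The heart of the argument is the per-step bound. By independence and the tower property, $Z_{k-1}$ equals $Z_k$ integrated over $X_k$ alone, so conditionally on $\mathcal{F}_{k-1}$ we may write $Z_k = g(X_k)$ and $Z_{k-1} = \mathbb{E}_{X_k'} g(X_k')$, where $g(x) = \mathbb{E}_{X_{k+1},\dots,X_n}\Vert X_1 + \dots + X_{k-1} + x + X_{k+1} + \dots + X_n\Vert_{\mathcal{H}}$. The triangle inequality shows $g$ is $1$-Lipschitz, so fixing the reference point $g(0)$ (which is $\mathcal{F}_{k-1}$-measurable) and setting $\Delta_k = g(X_k) - g(0)$ gives $\vert\Delta_k\vert \le \Vert X_k\Vert_{\mathcal{H}}$ together with $Z_k - Z_{k-1} = \Delta_k - \mathbb{E}_{X_k}\Delta_k$. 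Thus it suffices to bound $\mathbb{E}\,e^{t(\Delta_k - \mathbb{E}\Delta_k)}$ for a centred variable $\Delta_k$ whose absolute value is dominated by $V := \Vert X_k\Vert_{\mathcal{H}}$.

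For this I would use the elementary scalar inequality $e^s - 1 - s \le e^{\vert s\vert} - 1 - \vert s\vert$, which follows termwise from the power series since $s^j \le \vert s\vert^j$ for every $j \ge 2$. Applied with $s = t\Delta_k$ and $\vert\Delta_k\vert \le V$, it yields $e^{t\Delta_k} \le 1 + t\Delta_k + (e^{tV} - 1 - tV)$; taking expectations and using $1 + x \le e^x$ gives $\mathbb{E}\,e^{t\Delta_k} \le \exp(t\,\mathbb{E}\Delta_k + \mathbb{E}[e^{tV} - 1 - tV])$, and multiplying by $e^{-t\mathbb{E}\Delta_k}$ produces the desired $\mathbb{E}\,e^{t(\Delta_k - \mathbb{E}\Delta_k)} \le \exp(\mathbb{E}\,e^{t\Vert X_k\Vert_{\mathcal{H}}} - 1 - t\,\mathbb{E}\Vert X_k\Vert_{\mathcal{H}})$, where the expectation over $X_k$ is deterministic by independence. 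This is precisely the constant $C_k$, so assembling the peeling recursion of the first paragraph completes the proof.

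I expect the main obstacle to be technical bookkeeping rather than a conceptual hurdle: one must justify integrability so that the Doob martingale and its conditional generating functions are well defined (the inequality is trivial when the right-hand side is infinite), and one must verify the $1$-Lipschitz reduction and the reference-point centring carefully so that the increment $\Delta_k$ is genuinely controlled by $\Vert X_k\Vert_{\mathcal{H}}$ alone and not by the full partial sum. Once the martingale increments are correctly isolated, the convexity step and the scalar inequality are routine.
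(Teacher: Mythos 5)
The paper does not prove this statement; it is imported verbatim from \citet{yurinsky2006sums} (Theorem 3.3.1(b)), so there is no internal proof to compare against. Your argument is a correct, self-contained reconstruction of the standard Yurinskii martingale proof: the Doob martingale $Z_k=\mathbb{E}[\Vert S\Vert_{\mathcal{H}}\mid\mathcal{F}_k]$, the $1$-Lipschitz reduction showing the increment is controlled by $\Vert X_k\Vert_{\mathcal{H}}$ alone (your centring at $g(0)$ is exactly the usual $\mathbb{E}[\Vert S-X_k\Vert_{\mathcal{H}}\mid\mathcal{F}_{k-1}]$ device), the termwise inequality $e^{s}-1-s\le e^{|s|}-1-|s|$ together with monotonicity of $u\mapsto e^{u}-1-u$, and the observation that independence makes each per-step constant $C_k$ deterministic so the peeling recursion closes. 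All steps check out, including the trivial case when some $\mathbb{E}e^{t\Vert X_i\Vert_{\mathcal{H}}}$ is infinite.
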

  
  \begin{appxlem}
  \label{Th:Bernlemma}
   Suppose $r: \mathcal{X}^{k} \rightarrow \mathcal{H}$ satisfies $\mathbb{E} \left[ r(X_{1},\dots,X_{k}) \right] = 0$  and 
   \begin{align}
        \mathbb{E} \norm{r(X_{1},\dots,X_{k})}^{p}_{\mathcal{H}} &\leq \frac{p!}{2} \theta^2 M^{p-2},\,\,\, \forall p \geq 2.\label{Eq:condition}
    \end{align}
    Then for any $ 0< a < \frac{\floor{n/k}}{M}$, 
    
    \begin{align*}
        \mathbb{E}\left( \exp \left[a  \norm{ \textup{U}^{n}_{k}\left(r(X_1,\dots,X_k)\right)}_{\mathcal{H}} \right] \right) &\leq \exp{ \left[a (\floor{n/k})^{-1/2} \theta \right] } \exp{\left[ \frac{a^2 \theta^2 \floor{n/k}^{-1} }{2-2a\floor{n/k}^{-1}M}  \right]}.
    \end{align*}
  \end{appxlem}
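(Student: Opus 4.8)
The plan is to reduce the dependent $U$-statistic to a single average of \emph{independent} blocks, so that the independent-sum estimate of Theorem~\ref{Thm:Yurinsky} becomes applicable. The key device is Hoeffding's averaging representation. Writing $m = \floor{n/k}$ and
$$ W(x_1,\dots,x_n) = \frac{1}{m}\sum_{j=1}^{m} r\big(x_{(j-1)k+1},\dots,x_{jk}\big), $$
a counting argument shows that
$$ \textup{U}^n_k\big(r(X_1,\dots,X_k)\big) = \frac{1}{n!}\sum_{\sigma\in S_n} W\big(X_{\sigma(1)},\dots,X_{\sigma(n)}\big). $$
First I would verify this identity by checking that every distinct ordered $k$-tuple $(i_1,\dots,i_k)$ receives total weight $(n-k)!/n! = 1/\Myperm[n]{k}$: for each of the $m$ blocks the $k$ forced positions can be completed by a permutation of the remaining $n-k$ values in $(n-k)!$ ways, and the normalization is $n!\cdot m$. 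The gain is that, for each fixed $\sigma$, the quantity $W(X_{\sigma(1)},\dots,X_{\sigma(n)})$ is the arithmetic mean of $m$ independent $\mathcal{H}$-valued summands built from disjoint index blocks, each centered by hypothesis, and it has the same distribution as $W(X_1,\dots,X_n)$ because the $X_i$ are i.i.d.

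Next I would collapse the permutation average by convexity. Applying the triangle inequality and then Jensen's inequality to the convex map $z\mapsto \exp(a\norm{z}_{\mathcal{H}})$ gives
$$ \mathbb{E}\exp\!\Big[a\big\|\textup{U}^n_k(r)\big\|_{\mathcal{H}}\Big] \le \frac{1}{n!}\sum_{\sigma\in S_n}\mathbb{E}\exp\!\Big[a\big\|W(X_{\sigma(1)},\dots,X_{\sigma(n)})\big\|_{\mathcal{H}}\Big] = \mathbb{E}\exp\!\Big[a\,\|W(X_1,\dots,X_n)\|_{\mathcal{H}}\Big], $$
where the last equality uses the equality in distribution noted above. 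This reduces the claim to a moment generating function bound for $W = \sum_{j=1}^{m} Z_j$ with $Z_j := \tfrac{1}{m}\,r(X_{(j-1)k+1},\dots,X_{jk})$, a sum of $m$ independent, centered elements.

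I would then invoke Theorem~\ref{Thm:Yurinsky} with $t=a$ and control its two exponents separately. For the mean exponent, independence and centering yield $\mathbb{E}\|W\|_{\mathcal{H}}^2 = m^{-1}\,\mathbb{E}\|r(X_1,\dots,X_k)\|_{\mathcal{H}}^2 \le m^{-1}\theta^2$ by \eqref{Eq:condition} at $p=2$, so Jensen gives $\mathbb{E}\|W\|_{\mathcal{H}}\le \theta m^{-1/2}$ and hence the factor $\exp[a\,(\floor{n/k})^{-1/2}\theta]$. For the second exponent I would expand $\mathbb{E} e^{a\|Z_j\|_{\mathcal{H}}}-1-a\,\mathbb{E}\|Z_j\|_{\mathcal{H}} = \sum_{p\ge 2} \frac{a^p}{p!\,m^p}\,\mathbb{E}\|r(X_1,\dots,X_k)\|_{\mathcal{H}}^p$, substitute the Bernstein bound $\mathbb{E}\|r\|_{\mathcal{H}}^p\le \tfrac{p!}{2}\theta^2 M^{p-2}$ to reduce each summand to $\tfrac{\theta^2}{2M^2}(aM/m)^p$, sum the geometric series, and multiply by the $m$ identical terms; this produces exactly $\frac{a^2\theta^2 m^{-1}}{2-2a\,m^{-1}M}$, matching the claim with $m=\floor{n/k}$.

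The geometric series converges precisely when $aM/m<1$, i.e. $a<\floor{n/k}/M$, which is the stated admissible range and is where the hypothesis enters. The main obstacle is conceptual rather than computational: one must recognize that the summands of a $U$-statistic are dependent, so Theorem~\ref{Thm:Yurinsky} cannot be applied directly, and that Hoeffding's representation together with the convexity reduction is what manufactures the independent-sum structure. After that, the only points demanding care are tracking the $1/m$ scaling consistently through both exponents and confirming the tuple-weight bookkeeping in the representation; the remaining moment estimates are routine.
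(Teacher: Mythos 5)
Your proposal is correct and follows essentially the same route as the paper's proof: Hoeffding's averaging over permutations of the blocked estimator $V$, the triangle--Jensen reduction of the permutation average, Theorem~\ref{Thm:Yurinsky} applied to the resulting sum of $\floor{n/k}$ independent centered blocks, and the Bernstein moment condition with the geometric series yielding the second exponent under $aM/\floor{n/k}<1$. The tuple-weight bookkeeping and the two exponent bounds you describe match the paper's computation exactly.
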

  
  \begin{proof}
         By defining
        \begin{align}
            V(X_{1},\dots,X_{n}) \triangleq \frac{1}{\floor{n/k}} \sum_{i=1}^{\floor{n/k}} r\left(X_{1+(i-1)k}, \dots, X_{ik}\right),\label{Eq:v}
        \end{align}
it is easy to verify that
        \begin{align*}
            \text{U}^{n}_{k}\left(r(X_1,\dots,X_k)\right) &= \frac{1}{n!} \sum_{\sigma \in S(n)} V\left(X_{\sigma(1)} , \dots X_{\sigma(n)}\right).
        \end{align*}
        Therefore, 
        \begin{align*}
            &\mathbb{E} \left(\exp \left[ a\norm{\text{U}^{n}_{k}\left( r(X_1,\dots,X_k)\right)}_{\mathcal{H}} \right] \right) 
            = \mathbb{E} \left( \exp \norm{ \frac{a}{n!} \sum_{\sigma \in S(n)} V(X_{\sigma(1)} , \dots X_{\sigma(n)}) }_{\mathcal{H}} \right) \\
            &\leq \mathbb{E} \left(\exp \left[\frac{a}{n!} \sum_{\sigma \in S(n)} \norm{V(X_{\sigma(1)} , \dots, X_{\sigma(n)})}_{\mathcal{H}} \right] \right) \\
            &\stackrel{(\dagger)}{\leq} \mathbb{E} \left( \frac{1}{n!} \sum_{\sigma \in S(n)}  \exp \left( a \norm{V(X_{\sigma(1)} , \dots, X_{\sigma(n)})}_{\mathcal{H}} \right) \right) \\
             &= \frac{1}{n!} \sum_{\sigma \in S(n)} \mathbb{E} \left(\exp \left[a \norm{V(X_{\sigma(1)} , \dots, X_{\sigma(n)})}_{\mathcal{H}} \right] \right) 
             =\mathbb{E} \left(\exp \left[a \norm{V(X_{\sigma(1)} , \dots, X_{\sigma(n)})}_{\mathcal{H}} \right] \right),\nonumber
             \end{align*}
             where $(\dagger)$ follows from an application of Jensen's inequality. Using \eqref{Eq:v}, we have
             \begin{align}
             &\mathbb{E} \left(\exp \left[ a\norm{\text{U}^{n}_{k}\left( r(X_1,\dots,X_k)\right)}_{\mathcal{H}} \right] \right) 
            = \mathbb{E} \left( \exp \norm{ \frac{a}{\floor{n/k}} \sum_{i=1}^{\floor{n/k}} r(X_{1+(i-1)k}, \dots, X_{ik})}_{\mathcal{H}}   \right)  \nonumber\\
        &\stackrel{(*)}{\leq} \exp{ \left[ \sum_{i=1}^{\floor{n/k}} \mathbb{E} e^{(a/\floor{n/k}) \norm{r(X_{1+(i-1)k}, \dots, X_{ik})}_{\mathcal{H}} } - 1 -\frac{a}{\floor{n/k}} \mathbb{E} \norm{ r(X_{1+(i-1)k}, \dots, X_{ik})}_{\mathcal{H}} \right] } \nonumber\\
        &\qquad \times \exp{ \left[  \mathbb{E} \norm{ \frac{a}{\floor{n/k}} \sum_{i=1}^{\floor{n/k}} r(X_{1+(i-1)k}, \dots, X_{ik})}_{\mathcal{H}} \right]   }\nonumber\\
        &\stackrel{(\dagger)}{\leq}  \exp{ \left[ \frac{a}{\floor{n/k}} \sqrt{\sum_{i=1}^{\floor{n/k}} \mathbb{E} \norm{r(X_{1+(i-1)k}, \dots, X_{ik})}_{\mathcal{H}}^2 } \right] }\nonumber\\
        &\qquad\times\exp{ \left[  \sum_{i=1}^{\floor{n/k}} \sum_{p=2}^{\infty} \frac{(a/\floor{n/k})^{p}\mathbb{E}  \norm{r(X_{1+(i-1)k}, \dots, X_{ik})}_{\mathcal{H}}^{p}  }{p!}   \right] },\label{Eq:tt} \end{align}
        where we used Theorem~\ref{Thm:Yurinsky} in $(*)$ and Jensen's inequality in $(\dagger)$. By using \eqref{Eq:condition} in \eqref{Eq:tt}, we have
        \begin{align*}
        \mathbb{E} \left(\exp \left[ a\norm{\text{U}^{n}_{k}\left( r(X_1,\dots,X_k)\right)}_{\mathcal{H}} \right] \right) &\leq  \exp{ \left[ \frac{a\theta}{\sqrt{\floor{n/k}}}  \right] } \exp{ \left[ \frac{\left(\frac{a}{\floor{n/k}}\right)^2 \theta^2 \floor{n/k}}{2}   \sum_{p=0}^{\infty}  \left( \frac{aM}{\floor{n/k}}\right)^p   \right]}  
        \end{align*}
        \begin{align*}
        &\leq \exp{ \left[ \frac{a\theta}{\sqrt{\floor{n/k}}} \right] } \exp{ \left[ \frac{ (a\theta)^2/\floor{n/k}}{2- 2\left( \frac{aM}{\floor{n/k}}\right)}     \right]},
        \end{align*}
        where the last inequality holds since $ 0< a < \frac{\floor{n/k}}{M}$.
  \end{proof}

The following result presents a Bernstein-type inequality for $\mathcal{H}$-valued $U$-statistics, whose proof is based on Lemma~\ref{Th:Bernlemma}.

\begin{appxthm}[Bernstein Inequality for $U$-statistics] \label{Th:BernUstats}
Suppose $r : \mathcal{X}^{k} \rightarrow \mathcal{H}$ is such that 
  \begin{align*}
        \mathbb{E} \left[ r(X_{1},\dots,X_{k}) \right]  = 0\quad\text{and}\quad \mathbb{E} \norm{r(X_{1},\dots,X_{k})}^{p}_{\mathcal{H}} &\leq \frac{p!}{2} \theta^2 M^{p-2},\quad \forall p \geq 2.
    \end{align*}
    Then for all $\tau > 0$,
    \begin{align*}
        \textup{Pr} \left\{   \norm{ \emph{U}^{n}_k\left(r(X_{1}, X_{2},\dots,X_{k}) \right)}_{\mathcal{H}} \geq   4 \theta \sqrt{k} \left(\frac{1+\tau}{n} \right)^{\frac{1}{2}} + 4  M k \left( \frac{1+\tau}{n} \right)    \right\}  & \leq e^{-\tau}.
     \end{align*}
\end{appxthm}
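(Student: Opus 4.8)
The plan is to turn the exponential-moment bound of Lemma~\ref{Th:Bernlemma} into a tail bound via a Chernoff argument and then recognize the result as a standard sub-gamma deviation inequality. Write $N := \lfloor n/k \rfloor$ and $Z := \norm{\mathrm{U}^n_k(r(X_1,\dots,X_k))}_{\mathcal{H}}$. For any threshold $s>0$ and any $a \in (0,N/M)$, Markov's inequality gives $\mathbb{P}(Z \ge s) \le e^{-as}\,\mathbb{E}[e^{aZ}]$, and inserting the bound of Lemma~\ref{Th:Bernlemma} yields
\[
\mathbb{P}(Z \ge s) \le \exp\!\left[ -a\Big(s - \tfrac{\theta}{\sqrt N}\Big) + \frac{a^2\,(\theta^2/N)}{2\big(1 - a M/N\big)} \right].
\]
This is precisely the sub-gamma form: $Z$ concentrates about the shift $\theta/\sqrt N$ with variance factor $v := \theta^2/N$ and scale $c := M/N$, and the bound is valid on the admissible range $0 < a < 1/c$ dictated by the lemma.

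Next I would perform the classical sub-gamma optimization. Parametrizing the deviation as $s - \theta/\sqrt N = \sqrt{2v\tau} + c\tau$, the minimizer $a^* = c^{-1}\big(1 - (1+2c\tau/v)^{-1/2}\big)$ lies in $(0,1/c)$ and drives the exponent down to $-\tau$; equivalently one invokes the standard fact that a log-moment-generating bound $a^2 v/\big(2(1-ca)\big)$ on $(0,1/c)$ produces the tail level $e^{-\tau}$ at deviation $\sqrt{2v\tau}+c\tau$. This gives
\[
\mathbb{P}\!\left( Z \ge \frac{\theta}{\sqrt N} + \theta\sqrt{\frac{2\tau}{N}} + \frac{M\tau}{N} \right) \le e^{-\tau}, \qquad \tau>0 .
\]

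It then remains to rewrite the three terms in terms of $n$ to recover the stated constants. Since $\lfloor x \rfloor \ge x/2$ for $x \ge 1$, we have $N \ge n/(2k)$ and hence $1/N \le 2k/n$. Combining this with the elementary inequalities $\sqrt{2k} \le 2\sqrt k$, $1 + \sqrt{\tau} \le 2\sqrt{1+\tau}$ (a rearrangement of $(\sqrt\tau-1)^2 \ge 0$), and $\tau \le 1+\tau$, the first two terms collapse into $4\theta\sqrt k\,\big(\tfrac{1+\tau}{n}\big)^{1/2}$ and the third is dominated by $4Mk\,\big(\tfrac{1+\tau}{n}\big)$. Because the resulting threshold is larger than the sub-gamma one, monotonicity of $s \mapsto \mathbb{P}(Z \ge s)$ transfers the $e^{-\tau}$ bound to the claimed event.

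I expect the crux to be the sub-gamma optimization---confirming that the optimal $a^*$ satisfies the constraint $a^* < N/M$ built into Lemma~\ref{Th:Bernlemma} and that the exponent indeed reaches $-\tau$ for the chosen deviation---whereas the final conversion from $\lfloor n/k\rfloor$ to $n$ is routine constant-chasing.
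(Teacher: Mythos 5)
Your proposal is correct and follows essentially the same route as the paper: a Chernoff bound combined with the moment-generating-function estimate of Lemma~\ref{Th:Bernlemma}, followed by optimization over the exponent and the conversion $\lfloor n/k\rfloor^{-1}\le 2k/n$. The only difference is cosmetic---the paper substitutes $t = u/(\theta^2\lfloor n/k\rfloor^{-1}+M\lfloor n/k\rfloor^{-1}u)$ and solves the resulting quadratic in $u$, whereas you invoke the equivalent exact sub-gamma inversion (note your formula for $a^*$ should have $2cs/v$ rather than $2c\tau/v$ inside the square root, a harmless slip since the standard fact you cite is stated correctly).
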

  \begin{proof}
  Note that
      \begin{align}
          &\text{Pr} \left\{   \norm{ \text{U}^{n}_k\left(r(X_{1}, X_{2},\dots,X_{k}) \right)}_{\mathcal{H}} \geq u \right\}\nonumber \\ 
          &=  \text{Pr} \left\{ \exp \left[t \norm{ \text{U}_{k}^n\left(r(X_{1}, X_{2},\dots,X_{k}) \right)}_{\mathcal{H}}\right] \geq \exp {(t u)} \right\}\nonumber \\
          &\leq  \exp(-t u)\mathbb{E} \left( \exp  \left[t \norm{ \text{U}_{k}^n\left(r(X_{1}, X_{2},\dots,X_{k}) \right)}_{\mathcal{H}}\right] \right)  \quad\quad (\because\,\text{Markov's inequality})\nonumber\\
          &\stackrel{(*)}{\leq}  \exp{ \left(t (\floor{n/k})^{-1/2} \theta \right) } \exp\left(\frac{t^2\theta^2 (\floor{n/k})^{-1} }{2- 2Mt (\floor{n/k})^{-1}} -t u \right) \quad\quad \left(\forall\, \frac{t}{\floor{n/k}}M < 1\right) \label{Eq:t1}\\
          &= \exp{\left( - \frac{u^2/2-u\theta(\floor{n/k})^{-1/2}}{\theta^2 \floor{n/k}^{-1} + M \floor{n/k}^{-1} u} \right)},\nonumber
    \end{align}
    where the last equality is obtained by choosing $$t = \frac{u}{\theta^2 \floor{n/k}^{-1} + M \floor{n/k}^{-1} u}$$ in \eqref{Eq:t1} and $(*)$ follows from Lemma~\ref{Th:Bernlemma}. Note that this choice of $t$ clearly satisfies $\frac{t}{\floor{n/k}}M < 1$.  The result follows by choosing
    $$\tau =  \frac{u^2/2-u\theta (\floor{n/k})^{-1/2} }{\theta^2 \floor{n/k}^{-1} + M \floor{n/k}^{-1} u}$$ 
    and solving for $u$. Indeed, it is the case as solving $$ u^2 - 2\tau M \floor{n/k}^{-1} u - 2\tau\theta^2 \floor{n/k}^{-1} - 2u \theta(\floor{n/k})^{-1/2}   = 0$$ yields
        \begin{align*}
        u &=  \frac{2\tau M \floor{n/k}^{-1} + 2\theta (\floor{n/k})^{-1/2} + \sqrt{\left(2\tau M \floor{n/k}^{-1}+ 2\theta (\floor{n/k})^{-1/2} \right)^2+ 8\tau\theta^2 \floor{n/k}^{-1}}  }{2} \\
          &\leq  \frac{4 \tau M \floor{n/k}^{-1} +  4\theta (\floor{n/k})^{-1/2}+ 2 \sqrt{2 \tau \theta^2 \floor{n/k}^{-1} }}{2} \\
          &\leq  4 \theta \sqrt{k} \left(\frac{1+\tau}{n} \right)^{\frac{1}{2}} + 4  M k \left( \frac{1+\tau}{n} \right),
    \end{align*}
    where we used $\floor{n/k}^{-1}\le 2(n/k)^{-1}$ for $n\geq k$ in the last line and the result follows.
 \end{proof}
  For degenerate $U$-statistics with  \textit{bounded, real valued} kernels, \cite{arcones1993limit} established a better convergence rate than in Theorem~\ref{Th:BernUstats}. The following result extends their result for \emph{unbounded, $\mathcal{H}$-valued} ($U$-statistic) kernels. 
  \begin{appxthm}\label{Th:BernUstatsDeg}
  Let $X_1, X_2, \dots, X_{n}\stackrel{i.i.d.}{\sim}P$ and $\sigma^2 = \mathbb{E}\norm{r(X_{1}, \dots X_{k})}^2_{\mathcal{H}}<\infty$, where $r:\mathcal{X}^{k} \rightarrow \mathcal{H}$ is a $P$-measurable function. Suppose $r$ is $P$-complete degenerate and there exists positive constants $M$ and $\theta$ such that
    \begin{align*}
        \mathbb{E} \Big| \norm{r(X_1,\dots,X_k)}^2_\mathcal{H} - \mathbb{E} \norm{r(X_1,\dots,X_k)}^2_\mathcal{H} \Big|^p &\leq \frac{p!}{2} \theta^2 M^{p-2}, \,\,\,\, \forall p \geq 2. 
    \end{align*}
Then there exists positive constants $a_1$, $b_1$ and $b_2$  such that
            \begin{align*}
            \textup{Pr} \left\{   \norm{ \textup{U}^{n}_k\left(r(X_{1}, X_{2},\dots,X_{k}) \right)}_{\mathcal{H}} \geq qk^k \left( \frac{\tau}{nb_1} \right)^{k/2} + Mk^k  \left( \frac{\tau}{nb_2} \right)^{(k+1)/2}     \right\}  & \leq a_1 \exp{(-\tau)},
        \end{align*}
where $q^2 := (\theta +  \sigma^2 +  \theta^2 M^{-1} )$. 
    \end{appxthm}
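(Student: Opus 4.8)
The plan is to follow the exponential-moment (Chernoff) route used in the proof of Theorem~\ref{Th:BernUstats}, but to sharpen it by exploiting complete degeneracy through decoupling, in the spirit of \citet{de2012decoupling}. Since $r$ is $P$-complete degenerate, $r_0 = \mathbb{E}[r(X_1,\dots,X_k)] = 0$, so $\textup{U}^n_k(r)$ is already centered and I bound $\norm{\textup{U}^n_k(r)}_{\mathcal{H}}$ directly. First I would invoke the decoupling inequality for $U$-statistics: there is a constant $C_k$ depending only on $k$ such that for every convex increasing $\Phi$, $\mathbb{E}\,\Phi(\norm{\textup{U}^n_k(r)}_{\mathcal{H}}) \le \mathbb{E}\,\Phi(C_k\norm{\textup{U}^{n,\mathrm{dec}}_k(r)}_{\mathcal{H}})$, where $\textup{U}^{n,\mathrm{dec}}_k$ is the fully decoupled statistic formed from $k$ independent copies $(X^{(1)}_i),\dots,(X^{(k)}_i)$ of the sample. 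Taking $\Phi(\cdot)=\exp(t\,\cdot)$ reduces the problem to bounding the exponential moment of the decoupled statistic.

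For the decoupled statistic I would peel off the blocks one index at a time. Conditioning on all blocks except the first, write $\textup{U}^{n,\mathrm{dec}}_k(r) = \frac{1}{n}\sum_{i} G_1(X^{(1)}_i)$, where $G_1$ is the $(k-1)$-fold decoupled average over the remaining blocks. Complete degeneracy gives $r_{k-1}\equiv 0$, i.e.\ $\int r(x,x_2,\dots,x_k)\,dP(x)=0$, so $\mathbb{E}_{X^{(1)}}[G_1(X^{(1)}_i)\mid \text{rest}]=0$. Hence, conditionally, this is an average of i.i.d.\ mean-zero $\mathcal{H}$-valued terms, and Theorem~\ref{Thm:Yurinsky} (equivalently Lemma~\ref{Th:Bernlemma}) bounds its conditional exponential moment in terms of the conditional second moment $\frac{1}{n}\sum_i\norm{G_1(X^{(1)}_i)}^2_{\mathcal{H}}$ and a Bernstein scale. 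Iterating this conditioning across the $k$ blocks produces a product of $k$ Bernstein factors: each mean-zero layer contributes a sub-Gaussian rate $\sim(\tau/n)^{1/2}$ to the leading term, yielding the overall $(\tau/n)^{k/2}$, while the $M$-scaled corrections accumulate to $(\tau/n)^{(k+1)/2}$.

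The final ingredient is to control the innermost conditional second moment, which after all peels equals $\frac{1}{n^k}\sum\norm{r(\cdots)}^2_{\mathcal{H}}$, a $U$-statistic of the real-valued kernel $\norm{r}^2_{\mathcal{H}}$. Its mean is $\sigma^2$, and the given Bernstein hypothesis on $\norm{r}^2_{\mathcal{H}}-\mathbb{E}\norm{r}^2_{\mathcal{H}}$ lets me bound its fluctuation; this is precisely where the constant $q^2=\theta+\sigma^2+\theta^2 M^{-1}$ arises, the $\theta$ and $\theta^2 M^{-1}$ pieces accounting for the deviation of the empirical second moment from $\sigma^2$. Collecting the two contributions, choosing $t$ optimally, and solving the resulting quadratic-type relation for the deviation level exactly as in the proof of Theorem~\ref{Th:BernUstats}, yields the stated bound with constants $a_1,b_1,b_2$ depending only on $k$; the combinatorial factors $k^k$ come from the $k$ layers of averaging over blocks of size $\lfloor n/k\rfloor$.

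I expect the main obstacle to be the recursive bookkeeping in the peeling step: verifying that the conditional variables at each layer again satisfy a Bernstein-type moment condition with constants that propagate in a controlled way, and showing that the variance factors telescope to $\sigma^2$ (up to the fluctuation term) while the scale factors combine to give exactly the two-term form rather than a messier mixture of intermediate powers of $\tau/n$. Keeping the dependence on $k$ explicit and ensuring the blending $q^2=\theta+\sigma^2+\theta^2 M^{-1}$ emerges cleanly is the delicate part.
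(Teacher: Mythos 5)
Your high-level reading of where the constant $q^2=\theta+\sigma^2+\theta^2M^{-1}$ comes from is correct---it does arise from applying Lemma~\ref{Th:Bernlemma} to the real-valued $U$-statistic of $\norm{r}^2_{\mathcal{H}}-\mathbb{E}\norm{r}^2_{\mathcal{H}}$---but the middle of your argument has a genuine gap, and it is not the route the paper takes. The paper does not decouple and peel: it applies the randomization inequality (Theorem~\ref{thm:de2012decoupling2}) to introduce a Rademacher chaos $\sum\epsilon_{i_1}\cdots\epsilon_{i_k}r(X_{i_1},\dots,X_{i_k})$ on the \emph{same} sample, sandwiches the \emph{stretched} exponential $\exp\big(t\norm{\cdot}_{\mathcal{H}}^{2/(k+1)}\big)$ between convex functions via Theorem~\ref{thm:de2012decoupling1}, and then uses the hypercontractivity bound of Theorem~\ref{Th:contractivity} to reduce everything to the conditional second moment $\frac{1}{n^{k}}\sum\norm{r}^2_{\mathcal{H}}$, which is exactly the object your hypothesis controls. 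The fractional exponent $2/(k+1)$ is essential: for $k\ge 2$ a completely degenerate chaos has Weibull-type tails of order $\exp(-cu^{2/k})$, so the plain exponential moment $\mathbb{E}\exp\big(t\,n^{k/2}\norm{\textup{U}^n_k(r)}_{\mathcal{H}}\big)$ cannot be bounded at the scale needed to produce the $(\tau/n)^{k/2}$ leading term. Your plan to take $\Phi(\cdot)=\exp(t\,\cdot)$ after decoupling and then ``choose $t$ optimally and solve the quadratic-type relation exactly as in the proof of Theorem~\ref{Th:BernUstats}'' would therefore at best reproduce the non-degenerate $n^{-1/2}$ rate.

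The second, independent gap is in the peeling step itself. To run Theorem~\ref{Thm:Yurinsky} (or Lemma~\ref{Th:Bernlemma}) conditionally at each layer you need a Bernstein moment condition on the conditional kernels $G_j$, with constants that are themselves random functions of the outer blocks; the theorem's only hypothesis is a moment condition on the scalar $\norm{r}^2_{\mathcal{H}}$, which neither implies such conditional bounds nor lets you control the exponential moments of the random Bernstein constants that would accumulate after each conditioning. You flag this yourself as ``the delicate part,'' but it is not mere bookkeeping: without additional assumptions the recursion cannot be closed, and even if it could, the iterated bounds would produce a mixture of intermediate powers of $\tau/n$ rather than the clean two-term form. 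The symmetrization-plus-hypercontractivity route exists precisely to collapse all $k$ layers in a single step at the price of only needing second-moment information about $r$, which is why the theorem's hypothesis is phrased the way it is.
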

The proof of Theorem~\ref{Th:BernUstatsDeg} relies on the following results (Theorems~\ref{thm:de2012decoupling1}--\ref{Th:contractivity}) which are quoted from \cite{de2012decoupling}.   

   \begin{appxthm}[\citealp{de2012decoupling}, p.168]
   There exists a real valued  non-decreasing convex function $\Psi$ and a constant $a_{\alpha}>0$ that depends only $\alpha$ such that for any $\alpha>0$,
      \begin{align} \label{Eq:convex-exp}
     a_{\alpha} \Psi(\abs{x}) \leq  \exp(\abs{x}^{\alpha}) &\leq  \Psi(\abs{x}).
   \end{align}
   \label{thm:de2012decoupling1}
   \vspace{-4mm}
   \end{appxthm}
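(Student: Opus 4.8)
The plan is to construct the function $\Psi=\Psi_\alpha$ explicitly rather than invoke any abstract equivalence. Since both $\exp(\abs{x}^\alpha)$ and $\Psi(\abs{x})$ are even in $x$, and since $\Psi(\abs{x})$ is convex whenever $\Psi$ is convex and non-decreasing on $[0,\infty)$, it suffices to produce a non-decreasing convex $\Psi:[0,\infty)\to\mathbb{R}$ with $a_\alpha\Psi(t)\le e^{t^\alpha}\le\Psi(t)$ for all $t\ge 0$ and then recover \eqref{Eq:convex-exp} by substituting $t=\abs{x}$. For $\alpha\ge 1$ there is nothing to do: $t\mapsto t^\alpha$ is convex and non-decreasing on $[0,\infty)$ and $u\mapsto e^u$ is convex and increasing, so the composition $g(t):=e^{t^\alpha}$ is already convex and non-decreasing, and taking $\Psi=g$, $a_\alpha=1$ gives \eqref{Eq:convex-exp} with equality.

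The substantive case is $0<\alpha<1$, where $g$ fails to be convex near the origin. First I would locate the inflection point by differentiating:
$$g''(t)=\alpha\, t^{\alpha-2}e^{t^\alpha}\bigl[(\alpha-1)+\alpha t^\alpha\bigr],$$
so $g$ is concave on $[0,t_0]$ and convex on $[t_0,\infty)$, with $t_0:=\bigl((1-\alpha)/\alpha\bigr)^{1/\alpha}$. I then convexify $g$ by replacing it on $[0,t_0]$ with its tangent line at $t_0$:
$$\Psi(t):=\begin{cases} g(t_0)+g'(t_0)(t-t_0), & 0\le t\le t_0,\\ g(t), & t> t_0.\end{cases}$$
Because the two pieces agree in value and first derivative at $t_0$, the function $\Psi$ is $C^1$; it is convex (linear on $[0,t_0]$, convex on $[t_0,\infty)$, slopes matching at the join, so $\Psi'$ is non-decreasing) and non-decreasing since $g'(t_0)=\alpha t_0^{\alpha-1}e^{t_0^\alpha}>0$.

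It remains to verify the sandwich. For the upper bound $g\le\Psi$: equality holds on $[t_0,\infty)$, while on $[0,t_0]$ the function $g$ is concave and therefore lies below its tangent line at $t_0$, which is precisely $\Psi$ there. For the lower bound I would set $a_\alpha:=e^{-t_0^\alpha}=\exp\!\bigl(-(1-\alpha)/\alpha\bigr)\in(0,1)$, which depends only on $\alpha$. On $[t_0,\infty)$ we have $a_\alpha\Psi(t)=a_\alpha e^{t^\alpha}\le e^{t^\alpha}$ because $a_\alpha\le 1$; on $[0,t_0]$ the function $\Psi$ is increasing, so $\Psi(t)\le\Psi(t_0)=e^{t_0^\alpha}$, whence $a_\alpha\Psi(t)\le a_\alpha e^{t_0^\alpha}=1\le e^{t^\alpha}$. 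Combining the two ranges gives $a_\alpha\Psi(t)\le e^{t^\alpha}\le\Psi(t)$ for all $t\ge 0$, and substituting $t=\abs{x}$ yields \eqref{Eq:convex-exp}.

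The only delicate point, and the crux of the argument, is the non-convexity of $e^{t^\alpha}$ at the origin when $\alpha<1$. Everything hinges on identifying the inflection point $t_0$ and checking that a single linear convexification there simultaneously preserves the upper bound (the tangent of a concave function lies above it) and keeps $\Psi$ bounded by $e^{t_0^\alpha}$ on $[0,t_0]$, so that the lower bound survives with one $\alpha$-dependent constant $a_\alpha$ valid on the whole half-line.
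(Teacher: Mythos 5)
Your construction is correct, and it is worth noting that the paper itself offers no proof of this statement: it is quoted verbatim from \citet{de2012decoupling} (p.~168), where the relevant function is the Orlicz-type function $\psi_\alpha(x)=\exp(x^\alpha)-1$, convexified near the origin when $0<\alpha<1$. Your explicit tangent-line convexification is essentially the standard argument behind that citation. The calculus checks out: $g''(t)=\alpha t^{\alpha-2}e^{t^\alpha}\left[(\alpha-1)+\alpha t^\alpha\right]$ does vanish exactly at $t_0=\left((1-\alpha)/\alpha\right)^{1/\alpha}$; a concave function lies below its tangent lines, so $g\le\Psi$ on $[0,t_0]$; and your lower bound correctly exploits $\Psi(t)\le\Psi(t_0)=e^{t_0^\alpha}$ together with $e^{t^\alpha}\ge 1$, giving $a_\alpha=e^{-(1-\alpha)/\alpha}$ uniformly on $[0,t_0]$ and trivially on $[t_0,\infty)$. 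One small point deserves flagging: as printed, the theorem's quantifiers suggest a single $\Psi$ valid for every $\alpha>0$, which is impossible (no fixed $\Psi$ can dominate $e^{|x|^\alpha}$ for all $\alpha$ while remaining comparable from below); the intended reading, which you silently and correctly adopt, is that $\Psi=\Psi_\alpha$ depends on $\alpha$, as it does in the source. Your observation that $\Psi(|x|)$ inherits convexity on all of $\mathbb{R}$ from $\Psi$ being convex and non-decreasing on $[0,\infty)$ is also the right justification for how the result is actually used downstream in Theorem~\ref{Th:BernUstatsDeg}, where $\Psi$ is applied to a norm.
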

    \begin{appxthm}[\citealp{de2012decoupling}, Theorem 3.5.3, Remark 3.5.4]
   Let $\Psi$ be a non-decreasing convex function on $[0, \infty]$. Suppose  $\epsilon_{i_1},\ldots, \epsilon_{i_k}$ are independent Rademacher random variables that are independent of $(X_i)^n_{i=1}$, and $a$ is a constant that depends only $k$. Then
     \begin{align} \label{Th:randomization}
         \mathbb{E} \Psi \left( \norm{ \sum_{i_{1}< \dots< i_{k}}^n r(X_{1}, \dots, X_{k}) }_{\mathcal{H}}  \right) &\leq  \mathbb{E} \Psi \left(  a\norm{ \sum_{i_{1}< \dots< i_{k}}^n \epsilon_{i_1}\cdots \epsilon_{i_k} r(X_{1}, \dots, X_{k}) }_{\mathcal{H}} \right).
     \end{align}
     \label{thm:de2012decoupling2}
     \vspace{-4mm}
   \end{appxthm}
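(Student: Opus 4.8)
The plan is to prove the comparison by Rademacher symmetrization, carried out one coordinate at a time after a decoupling step, so that the multiplicative constant $a$ depends only on $k$. Throughout I use that $r$ is $\mathbb{P}$-complete degenerate (the setting in which this inequality is invoked, cf. Theorem~\ref{Th:BernUstatsDeg}), which supplies the centering required for symmetrization: for a symmetric completely degenerate kernel, integrating out any single argument yields $0$, since $r_{k-1}(x_1,\dots,x_{k-1})=\int r(x_1,\dots,x_k)\,d\mathbb{P}(x_k)=0$. I also use that $\Psi$ is non-decreasing and convex, so that Jensen's inequality and the triangle inequality can be combined freely inside $\Psi$.

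First I would record the base case $k=1$, i.e. the classical symmetrization inequality. Given independent, mean-zero, $\mathcal{H}$-valued summands $Y_1,\dots,Y_n$, introduce an independent copy $(Y_i')$; then $\mathbb{E}\Psi(\norm{\sum_i Y_i})=\mathbb{E}\Psi(\norm{\sum_i (Y_i-\mathbb{E} Y_i')})\le \mathbb{E}\Psi(\norm{\sum_i (Y_i-Y_i')})$ by Jensen, the increments $Y_i-Y_i'$ are symmetric and hence equal in law to $\epsilon_i(Y_i-Y_i')$, and a final application of convexity and the triangle inequality gives $\mathbb{E}\Psi(\norm{\sum_i Y_i})\le \mathbb{E}\Psi(2\norm{\sum_i \epsilon_i Y_i})$. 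This is exactly the degree-one instance of the claim with $a=2$.

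The core of the argument is to bootstrap this to degree $k$ while keeping the constant independent of $n$. Inserting $\epsilon_m$ for each index $m\in\{1,\dots,n\}$ one at a time fails, since the same $X_m$ occurs in many terms in several coordinate positions and the naive symmetrization then costs a factor $2$ per index, i.e. $2^n$. To avoid this I would pass to the decoupled form: by the decoupling inequalities for $U$-statistics (constant $c_k$ depending only on $k$), $\mathbb{E}\Psi(\norm{\sum_{i_1<\cdots<i_k} r(X_{i_1},\dots,X_{i_k})})$ is bounded by $\mathbb{E}\Psi(c_k\norm{\sum r(X^{(1)}_{i_1},\dots,X^{(k)}_{i_k})})$, where $X^{(1)},\dots,X^{(k)}$ are independent copies of the sample. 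In this multilinear form I condition on all copies except $X^{(1)}$ and write it as $\sum_{i_1} g_{i_1}(X^{(1)}_{i_1})$ with $g_{i_1}(x)=\sum_{i_2,\dots,i_k} r(x,X^{(2)}_{i_2},\dots,X^{(k)}_{i_k})$; the summands are conditionally independent across $i_1$, and complete degeneracy makes each conditionally centered, so the base case inserts a multiplier $\epsilon^{(1)}_{i_1}$ at cost $2$. Iterating over the remaining $k-1$ coordinates introduces $\epsilon^{(2)}_{i_2},\dots,\epsilon^{(k)}_{i_k}$, for a total factor $2^k$ and the fully randomized decoupled chaos $\sum \epsilon^{(1)}_{i_1}\cdots\epsilon^{(k)}_{i_k} r(X^{(1)}_{i_1},\dots,X^{(k)}_{i_k})$.

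It then remains to revert from this decoupled, per-coordinate-randomized chaos to the coupled form $\sum_{i_1<\cdots<i_k}\epsilon_{i_1}\cdots\epsilon_{i_k}r(X_{i_1},\dots,X_{i_k})$ appearing on the right-hand side. Conditionally on the data, both objects are homogeneous Rademacher chaoses of order $k$, and passing between the $k$ independent multiplier sequences $(\epsilon^{(j)})$ and the single sequence $(\epsilon_i)$, as well as from the decoupled all-tuple sum to the ordered coupled sum, is handled by the two-sided decoupling and comparison principles for Rademacher chaos, again at a constant depending only on $k$; absorbing all constants into a single $a=a(k)$ finishes the proof. I expect this recoupling step---reconciling the overlapping indices of the coupled statistic with the independence exploited in the symmetrization, and converting the multiple Rademacher sequences into one---to be the main obstacle, whereas the symmetrization itself is routine once degeneracy supplies the conditional centering.
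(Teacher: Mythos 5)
The paper offers no proof of this statement: it is quoted (with minor notational typos) from \citet{de2012decoupling}, Theorem 3.5.3 and Remark 3.5.4, so the only fair comparison is with the argument in that reference, which your outline reproduces faithfully --- decoupling via the convex-function decoupling inequality, conditional one-coordinate-at-a-time symmetrization at cost $2$ per coordinate, with complete degeneracy supplying the conditional centering, and recoupling of the randomized chaos via the reverse decoupling inequality for symmetric kernels applied to the augmented variables $(\epsilon_i, X_i)$. Your proposal is correct, and you rightly flag that the complete-degeneracy hypothesis, which the paper's loose restatement omits but which holds in the only place the lemma is invoked (the proof of Theorem~\ref{Th:BernUstatsDeg}, where the kernel is $P$-complete degenerate), is exactly what licenses the symmetrization step.
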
 
   \begin{appxthm}[\citealp{de2012decoupling}, Corollary 3.2.7]\label{Th:contractivity}
   For every $d \in \mathbb{N}$ and $0 < \alpha < \frac{2}{d}$, there exists positive constants, $c_1,c_2$, such that for all $t >0$,
   \begin{align}
       &\tilde{\mathbb{E}} \exp{ \left( t \norm{ \sum_{i_{1}< \dots< i_{k}}^n \epsilon_{i_1}\cdots \epsilon_{i_k} r(X_{1}, \dots, X_{k}) }_{\mathcal{H}}^{\alpha} \right)} \nonumber\\
       &\leq c_1 \exp{\left(c_2 t^{\frac{2}{2-\alpha d}}  \left( \tilde{\mathbb{E}}  \norm{ \sum_{i_{1}< \dots< i_{k}}^n \epsilon_{i_1}\cdots \epsilon_{i_k} r(X_{1}, \dots, X_{k}) }_{\mathcal{H}}^{2}  \right)^{\frac{\alpha}{2-\alpha d}} \right)},\label{Eq:de2012decoupling3}
   \end{align}
   where $\tilde{\mathbb{E}}$ denotes the expectation w.r.t.~Rademacher variables conditioned on $(X_i)^n_{i=1}$.
   \end{appxthm}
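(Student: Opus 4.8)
The plan is to treat the estimate as a statement about the Orlicz integrability of a Rademacher chaos. Conditionally on $(X_i)_{i=1}^{n}$, the element $Z \defeq \sum_{i_1<\cdots<i_k}\epsilon_{i_1}\cdots\epsilon_{i_k}\,r(X_{i_1},\dots,X_{i_k})$ is a homogeneous chaos of degree $d=k$ in the Rademacher variables with fixed coefficients in $\mathcal{H}$, so the only randomness seen by $\tilde{\mathbb{E}}$ is that of the $\epsilon_i$'s. The engine driving everything is hypercontractivity, and the first step I would carry out is to record, for every $q\ge 2$, the moment-comparison inequality
\[
\big(\tilde{\mathbb{E}}\,\norm{Z}_{\mathcal{H}}^{q}\big)^{1/q}\le (q-1)^{d/2}\,\big(\tilde{\mathbb{E}}\,\norm{Z}_{\mathcal{H}}^{2}\big)^{1/2}.
\]
This is the Bonami--Beckner two-point inequality tensorised over the $n$ coordinates, using that the noise semigroup acts as multiplication by $\rho^{d}$ on degree-$d$ homogeneous chaos; the Hilbert-space-valued version survives because the two-point inequality holds with vector coefficients and tensorises.

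Second, I would convert this polynomial moment growth into the claimed exponential bound by a power-series expansion. Writing $\sigma^2\defeq\tilde{\mathbb{E}}\,\norm{Z}_{\mathcal{H}}^{2}$ and expanding,
\[
\tilde{\mathbb{E}}\exp\!\big(t\norm{Z}_{\mathcal{H}}^{\alpha}\big)=\sum_{m\ge 0}\frac{t^{m}}{m!}\,\tilde{\mathbb{E}}\,\norm{Z}_{\mathcal{H}}^{\alpha m},
\]
and bounding each high-order moment via the first step by $\tilde{\mathbb{E}}\,\norm{Z}_{\mathcal{H}}^{\alpha m}\le\big((\alpha m)^{d/2}\sigma\big)^{\alpha m}$, valid once $\alpha m\ge2$ (the finitely many low-order terms are absorbed into the constant $c_1$). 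Using Stirling in the form $m!\ge (m/e)^{m}$, the generic term is at most $\big(e\,t\sigma^{\alpha}\big)^{m}\,m^{\,m(\alpha d/2-1)}$, and since $\alpha d<2$ the exponent $\alpha d/2-1$ is strictly negative.

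The third step is a maximal-term (Laplace) analysis of this sum: maximising $m\log(e\,t\sigma^{\alpha})+m(\alpha d/2-1)\log m$ over $m$ gives an optimiser $m^{*}\asymp (e\,t\sigma^{\alpha})^{1/(1-\alpha d/2)}$ and a maximal log-value proportional to $(t\sigma^{\alpha})^{2/(2-\alpha d)}$, where the exponent $2/(2-\alpha d)=\big(1-\alpha d/2\big)^{-1}$ is exactly the reciprocal forced out by the balance between the $m^{-m}$ from $m!$ and the $m^{\alpha d m/2}$ from the moments. Summing the geometric-type tail around $m^{*}$ leaves the rate unchanged, yielding $\tilde{\mathbb{E}}\exp(t\norm{Z}_{\mathcal{H}}^{\alpha})\le c_1\exp\!\big(c_2\,(t\sigma^{\alpha})^{2/(2-\alpha d)}\big)$, and rewriting $\sigma^{2\alpha/(2-\alpha d)}=(\sigma^{2})^{\alpha/(2-\alpha d)}$ recovers the stated form with $\sigma^2=\tilde{\mathbb{E}}\norm{Z}_{\mathcal{H}}^2$.

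The hard part will be twofold. First, upgrading the scalar hypercontractive inequality to Hilbert-space-valued coefficients while retaining the clean multiplier $\rho^{d}$ requires that the degree-$d$ homogeneity be preserved under the vector-valued noise operator, which needs the two-point estimate in its vector form. Second, the bookkeeping in the series summation is delicate: the non-integer exponents $\alpha m$, the separate handling of the low-order terms $\alpha m<2$, and the strict inequality $\alpha d<2$ must combine so that precisely the exponent $2/(2-\alpha d)$—and not a weaker rate—emerges. Any slack in the Stirling estimates or in the localisation of the maximal term would degrade this exponent, so the quantitative control at that step is the crux of the argument.
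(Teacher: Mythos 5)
This theorem is quoted in the paper without proof---it is cited verbatim from de la Peña and Giné (2012, Corollary 3.2.7) as an ingredient for Theorem~\ref{Th:BernUstatsDeg}---so there is no in-paper argument to compare against, and your reconstruction (the vector-valued two-point Bonami--Beckner inequality tensorised to give $(\tilde{\mathbb{E}}\Vert Z\Vert_{\mathcal{H}}^{q})^{1/q}\le (q-1)^{d/2}(\tilde{\mathbb{E}}\Vert Z\Vert_{\mathcal{H}}^{2})^{1/2}$ for degree-$d$ chaos, followed by Taylor expansion of $\exp(t\Vert Z\Vert_{\mathcal{H}}^{\alpha})$, Stirling, and a maximal-term analysis that forces exactly the exponent $2/(2-\alpha d)$ from the balance of $m^{-m}$ against $m^{\alpha d m/2}$ under $\alpha d<2$) is correct and is essentially the argument in that source. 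One minor imprecision: the finitely many terms with $\alpha m<2$ are polynomial in $t\sigma^{\alpha}$ and so cannot literally be absorbed into the constant $c_1$; they must instead be dominated by the exponential factor, e.g.\ via $u^{m}\le C_m\exp\big(u^{2/(2-\alpha d)}\big)$, which is available precisely because $2/(2-\alpha d)>1$ and is routine.
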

   \begin{proof}
            Consider 
            \begingroup
            \allowdisplaybreaks
            \begin{align*}
                &\mathbb{E} \exp \left(t \norm{\frac{1}{n^{k/2}} \sum_{i_{1} < \dots < i_{k}}^n r(X_{i_1}, X_{i_2},\dots,X_{i_k})}_{\mathcal{H}}^{\frac{2}{k+1}} \right) \\
                &\stackrel{\eqref{Eq:convex-exp}}{\leq}   \mathbb{E}_{X_1,\dots,X_n} \Psi \left(t^{\frac{k+1}{2}} \norm{\frac{1}{n^{k/2}} \sum_{i_{1} < \dots < i_{k}}^n r(X_{i_1}, X_{i_2},\dots,X_{i_k})}_{\mathcal{H}} \right)\\ 
                &\stackrel{\eqref{Th:randomization}}{\leq}  \mathbb{E}_{X_1,\dots,X_n, \epsilon_{1},\dots,\epsilon_{n}} \Psi \left(c_1 t^{\frac{k+1}{2}} \norm{\frac{1}{n^{k/2}} \sum_{i_{1} < \dots < i_{k}}^n \epsilon_{i_1}\epsilon_{i_2}\cdots\epsilon_{i_k} r(X_{i_1}, X_{i_2},\dots,X_{i_k})}_{\mathcal{H}} \right)\\
             &\stackrel{\eqref{Eq:convex-exp}}{\leq} \frac{1}{c_{2}} \mathbb{E}_{X_1,\dots,X_n, \epsilon_1,\dots,\epsilon_n} \exp \left(c_1^{\frac{2}{k+1}}t \norm{\frac{1}{n^{k/2}} \sum_{i_{1} < \dots < i_{k}}^n \epsilon_{i_1}\epsilon_{i_2}\cdots\epsilon_{i_k} r(X_{i_1}, X_{i_2},\dots,X_{i_k})}^{\frac{2}{k+1}}_{\mathcal{H}} \right)\\ 
               &\stackrel{\eqref{Eq:de2012decoupling3}}{\leq} a_1 \mathbb{E}_{X_1,\dots,X_{n}}  \exp{\left(a_2 t^{{k+1}} \mathbb{E}_{\epsilon_{1},\dots,\epsilon_{n}} \norm{\frac{1}{n^{k/2}} \sum_{i_{1}<\dots<i_{k}}^n \epsilon_{i_1}\epsilon_{i_2}\cdots\epsilon_{i_k} r(X_{i_1}, X_{i_2},\dots,X_{i_k})}^{2}_{\mathcal{H}} \right)}\\
               &=:\spadesuit,   
               \end{align*}
               where in the last inequality, we employed \eqref{Eq:de2012decoupling3} with $d=k$. Here, $c_1$, $c_2$, $a_1$ and $a_2$ are constants that depend only on $k$. By noting that
               \begin{align*}
               &\mathbb{E}_{\epsilon_{1},\dots,\epsilon_{n}} \norm{\frac{1}{n^{k/2}} \sum_{i_{1}<\dots<i_{k}}^n \epsilon_{i_1}\epsilon_{i_2}\cdots\epsilon_{i_k} r(X_{i_1}, X_{i_2},\dots,X_{i_k})}^{2}_{\mathcal{H}}
               =\frac{1}{n^{k}}   \sum_{i_{1}<\dots<i_{k}}^n  \norm{r(X_{i_1}, X_{i_2},\dots,X_{i_k})}^{2}_{\mathcal{H}},
               \end{align*}
               we obtain
                \begin{align*}
                &\spadesuit= a_1 \mathbb{E}_{X_{1},\dots,X_{n}}  \exp{\left( a_2 t^{{k+1}} \frac{1}{n^{k}}   \sum_{i_{1}<\dots<i_{k}}^n  \norm{r(X_{i_1}, X_{i_2},\dots,X_{i_k})}^{2}_{\mathcal{H}} \right)}\\
                &= a_1\mathbb{E}  \exp\left(a_2 t^{{k+1}} \frac{1}{n^{k}}   \sum_{i_{1}<\dots<i_{k}}^n  \norm{r(X_{i_1}, X_{i_2},\dots,X_{i_k})}^{2}_{\mathcal{H}}- a_2t^{k+1} \mathbb{E} \norm{r(X_{1}, X_{2},\dots,X_{k})}^2_{\mathcal{H}}\right. \\
                & \qquad\qquad\qquad\qquad\qquad+ \Bigg. a_2t^{k+1} \mathbb{E} \norm{r(X_{1}, X_{2},\dots,X_{k})}^2_{\mathcal{H}} \Bigg) \\
                &\stackrel{(*)}{\leq} a_1 \mathbb{E} \exp{\left(  \frac{a_2 t^{{k+1}}}{\Mycomb[n]{k}}   \sum_{i_{1}<\dots<i_{k}}^n  \Bigg[\norm{r(X_{i_1}, X_{i_2},\dots,X_{i_k})}^{2}_{\mathcal{H}} - \mathbb{E} \norm{r(X_{i_1}, X_{i_2},\dots,X_{i_k})}^2_{\mathcal{H}} \Bigg]  \right)}\\&\qquad\qquad\times\exp{\left(a_2t^{k+1}\sigma^2\right)}   \\
                &= a_1 \mathbb{E} \exp{\bigg(  a_2 t^{k+1} \,    \text{U}^{k}_{n}\Big( \norm{r(X_{1}, X_{2},\dots,X_{k})}^{2}_{\mathcal{H}} - \mathbb{E} \norm{r(X_{1}, X_{2},\dots,X_{k})}^2_{\mathcal{H}} \Big)  \bigg)}\exp{\left(a_2t^{k+1}\sigma^2\right)} \\
                &=:\clubsuit,
                \end{align*}
                where we used $\frac{1}{n^k} \leq \frac{1}{\Mycomb[n]{k}}$ in $(*)$. It follows from Lemma~\ref{Th:Bernlemma} that for all $0< a_{2}t^{k+1} < \frac{\floor{n/k}}{M} $
                \begin{align*}
                    \clubsuit &\leq  a_1 \exp{\left( a_2 t^{k+1} (\floor{n/k})^{-1/2} \theta \right) } \exp{\left( \frac{a_2^2t^{2(k+1)} \theta^2 \floor{n/k}^{-1} }{2-2a_2t^{k+1}\floor{n/k}^{-1}M}   \right)} \exp{\left(a_2t^{k+1}\sigma^2\right)} \\ 
                    &= a_1 \exp{\left( a_2 t^{k+1} \theta \right) } \exp{\left( \frac{a_2^2t^{2(k+1)} \theta^2  }{2\floor{n/k}-2a_2t^{k+1}M}   \right)} \exp{\left(a_2t^{k+1}\sigma^2\right)} \\
                    &\stackrel{(**)}{\le} a_1 \exp{\left( a_2 t^{k+1} \theta \right) } \exp{\left( a_2 t^{k+1} \theta^2 M^{-1}  \right)}  \exp{\left(a_2t^{k+1}\sigma^2\right)} 
                 = a_1 \exp{\left(a_2 t^{k+1} q^2 \right)},
                \end{align*}
where $q^2 = (\theta +  \sigma^2 +  \theta^2 M^{-1} )$ and $(**)$ holds if $0< t^{k+1} <  \frac{2\floor{n/k}}{3a_2M} $. Therefore, for all $0< t^{k+1} <  \frac{2\floor{n/k}}{3a_2M} $,
                 \begin{align*}
                & \text{Pr} \left\{ \frac{1}{n^{k/2}} \norm{ \sum_{i_{1} < \dots < i_{k}}^n r(X_{i_1}, X_{i_2},\dots,X_{i_k})}_{\mathcal{H}} > u \right\} \\
                &= \text{Pr} \left \{ \exp{\left(t \norm{\frac{1}{n^{k/2}} \sum_{i_{1} < \dots < i_{k}}^n r(X_{i_1}, X_{i_2},\dots,X_{i_k})}_{\mathcal{H}}^{\frac{2}{k+1}}\right)} > \exp{\left(tu^{\frac{2}{k+1}}\right)} \right\}  \\
                &\leq \exp{\left(-tu^{\frac{2}{k+1}}\right)}\,\mathbb{E} \exp \left(t \norm{\frac{1}{n^{k/2}} \sum_{i_1<\dots i_k}r(X_{i_1}, X_{i_2},\dots,X_{i_k})}_{\mathcal{H}}^{\frac{2}{k+1}} \right) \\
                 &\leq  a_1 \exp{\left(  - tu^{\frac{2}{k+1}}+ a_2 t^{k+1} q^2   \right)}=:a_1  F(t),
            \end{align*}
            where $F(t)=\exp\left(- tu^{\frac{2}{k+1}}+ a_2 t^{k+1} q^2\right)$. We now consider two cases. Let $K$ be any constant such that $K \le \frac{(a_{2}k)^{\frac{1}{k}}}{3M}$.
            
            \textit{Case (i):} Suppose $\dfrac{u^{\frac{2}{k}}}{n q^{\frac{2(k+1)}{k}}} \le K$. Note that  $t^{*} = \argmin_{t} F(t)=\left( \frac{u^{\frac{2}{k+1}}}{a_2(k+1) q^2 } \right)^{\frac{1}{k}}$. It is easy to verify that $t^{*}$ is permissible since

            \begin{align*}
                (t^{*})^{k+1} &= \left( \frac{u^{\frac{2}{k+1}}}{a_2(k+1) q^2 } \right)^{\frac{k+1}{k}} 
                = \frac{u^{\frac{2}{k}}}{a_2^{\frac{k+1}{k}} (k+1)^{\frac{k+1}{k}} q^{\frac{2(k+1)}{k}}} 
                \leq  \frac{u^{\frac{2}{k}}}{a_2^{\frac{k+1}{k}} k^{\frac{k+1}{k}} q^{\frac{2(k+1)}{k}}} \\
                 &=  \frac{n u^{\frac{2}{k}}}{n  a_2^{\frac{k+1}{k}} k^{\frac{k+1}{k}} q^{\frac{2(k+1)}{k}}} 
                 =  \frac{(n/k)}{a_2^{\frac{k+1}{k}} k^{\frac{1}{k}}} \left[ \frac{u^{\frac{2}{k}}}{nq^{\frac{2(k+1)}{k}}} \right]   
                 \le \frac{(n/k)}{a_2^{\frac{k+1}{k}} k^{\frac{1}{k}}}  \frac{(a_{2}k)^{\frac{1}{k}}}{3M} 
                 < \frac{2 \floor{n/k}}{3a_{2} M}.
            \end{align*}
            Therefore, 
            \begin{align*}
                F(t^*)& \leq  \exp{\left( -  \left( \frac{u^{\frac{2}{k+1}}}{a_2(k+1) q^2 } \right)^{\frac{1}{k}} u^{\frac{2}{k+1}} + a_2 \left( \frac{u^{\frac{2}{k+1}}}{a_2(k+1) q^2 } \right)^{\frac{k+1}{k}}q^2  \right)} \\ 
                & \leq  \exp{\left( - \left(\frac{u^2}{q^2}\right)^{\frac{1}{k}} \frac{1}{(a_2(k+1))^{\frac{1}{k}}}  +  \left(\frac{u^2}{q^2}\right)^{\frac{1}{k}}\frac{1}{a_2^{\frac{1}{k}} (k+1)^{\frac{k+1}{k}}}    \right)}  
                \leq  \exp{\left( - b_1 \left(\frac{u^2}{q^2}\right)^{\frac{1}{k}}\right)},
            \end{align*}
where $b_1:=\frac{k}{(k+1)^{\frac{k+1}{k}}a^{\frac{1}{k}}_2}$.

            \textit{Case (ii):}  Suppose $\dfrac{u^{\frac{2}{k}}}{n q^{\frac{2(k+1)}{k}}} > K$. Let $L$ be any constant such that $L < \frac{KM^2}{(ka_{2})^{ \frac{k+1}{k}}}$. Define $t^{**} = \left( \frac{Ln}{M^2}\right)^{\frac{1}{k+1}}$. $t^{**}$ is indeed permissible since  
\begin{align*}
                (t^{**})^{k+1} =  \frac{Ln}{M^2}  < \frac{KM^2}{(ka_{2})^{\frac{k+1}{k}}} \frac{n}{M^2} \le  \frac{(a_{2}k)^{\frac{1}{k}}}{3M} \frac{n}{(ka_{2})^{\frac{k+1}{k}}}  = \frac{n/k}{3 a_2 M} < \frac{2 \floor{n/k}}{3a_{2} M}.
            \end{align*}
Since $q^{2} < \frac{u^{\frac{2}{k+1}}}{(nK)^{\frac{k}{k+1}}}$, we have
            \begin{align*}
                F(t^{**})
                &=  \exp{\left(  - L^{\frac{1}{k+1}} \left( \frac{nu^2}{M^2} \right)^{\frac{1}{k+1}} + a_2  \frac{Ln}{M^2} q^2   \right)} \\
                 &<  \exp{\left(  - L^{\frac{1}{k+1}} \left( \frac{nu^2}{M^2} \right)^{\frac{1}{k+1}} +   \left( \frac{nu^2}{M^2} \right)^{\frac{1}{k+1}} \frac{a_2 L M^{\frac{-2k}{k+1}} }{K^{\frac{k}{k+1}}}   \right)} 
                 \leq  \exp{\left( - b_{2}\left( \frac{nu^2}{M^2} \right)^{\frac{1}{k+1}} \right)},
            \end{align*}
            where $b_2$ is a positive constant since $ \frac{a_2 L M^{\frac{-2k}{k+1}} }{K^{\frac{k}{k+1}}} <  L^{\frac{1}{k+1}} \Leftrightarrow   L <\frac{KM^2}{a_{2}^{\frac{k+1}{k}}}\Leftarrow L<\frac{KM^2}{(ka_{2})^{ \frac{k+1}{k}}}$.
Therefore,

          \begin{align*}
               &\text{Pr} \left\{ \frac{1}{n^{k/2}} \norm{ \sum_{i_1 < \dots < i_k}^n r(X_{i_1}, X_{i_2}, \dots, X_{i_k})}_{\mathcal{H}} > u \right\}\\
               &\leq a_1 \min \left( {\exp{ \left( - b_1 \left(  \frac{u}{q} \right)^{\frac{2}{k}} \right) } }, \exp{ \left( -b_2 \left( \frac{nu^2}{M^2} \right)^{\frac{1}{k+1}} \right)} \right) \\
               &= a_1\exp\left( \min \left(  - b_1 \left(  \frac{u}{q} \right)^{\frac{2}{k}}, -b_2 \left( \frac{nu^2}{M^2} \right)^{\frac{1}{k+1}}  \right) \right) \\
               &= a_1\exp\left( - \max \left(  b_1 \left(  \frac{u}{q} \right)^{\frac{2}{k}}, b_2 \left( \frac{nu^2}{M^2} \right)^{\frac{1}{k+1}}  \right) \right),
           \end{align*} 
implying, 
        \begin{align*}
        &a_1\exp{(-\tau)}\\
            &\ge \text{Pr} \left\{ \frac{1}{n^{k/2}} \norm{ \sum_{i_{1}< \dots < i_{k}}^n r(X_{i_1}, X_{i_2}, \dots, X_{i_k})}_{\mathcal{H}} \geq \max\left( q \left( \frac{\tau}{b_1} \right)^{k/2} , \frac{M}{\sqrt{n}}  \left( \frac{\tau}{b_2} \right)^{(k+1)/2}   \right) \right\}\\ 
            &=  \text{Pr} \left\{ \frac{\Mycomb[n]{k}}{n^{k/2}}  \norm{ \text{U}^{n}_{k}
             \Big(r(X_{1}, X_{2},\dots,X_{k}) \Big)}_{\mathcal{H}} \geq \max\left( q \left( \frac{\tau}{b_1} \right)^{k/2} , \frac{M}{\sqrt{n}}  \left( \frac{\tau}{b_2} \right)^{(k+1)/2}   \right) \right\}  \\
             & = \text{Pr} \left\{   \norm{ \text{U}^{n}_{k}
             \Big(r(X_{1}, X_{2},\dots,X_{k}) \Big)}_{\mathcal{H}} \geq \frac{n^{k/2}}{\Mycomb[n]{k}}  \max\left( q \left( \frac{\tau}{b_1} \right)^{k/2} , \frac{M}{\sqrt{n}}  \left( \frac{\tau}{b_2} \right)^{(k+1)/2}   \right) \right\}  \\
             & \stackrel{(\dagger)}{\geq} \text{Pr} \left\{   \norm{ \text{U}^{n}_k\Big(r(X_{1}, X_{2},\dots,X_{k}) \Big)}_{\mathcal{H}} \geq \frac{k^k}{n^{k/2}} \max\left( q \left( \frac{\tau}{b_1} \right)^{k/2} , \frac{M}{\sqrt{n}}  \left( \frac{\tau}{b_2} \right)^{(k+1)/2}   \right) \right\}   \\
              & = \text{Pr} \left\{   \norm{ \text{U}^{n}_k\Big(r(X_{1}, X_{2},\dots,X_{k}) \Big)}_{\mathcal{H}} \geq \max\left(k^k q \left( \frac{\tau}{nb_1} \right)^{k/2} , k^k M  \left( \frac{\tau}{nb_2} \right)^{(k+1)/2}   \right) \right\} \\
              & \stackrel{(\ddagger)}{\geq} \text{Pr} \left\{   \norm{ \text{U}^{n}_k\Big(r(X_{1}, X_{2},\dots,X_{k}) \Big)}_{\mathcal{H}} \geq qk^k \left( \frac{\tau}{nb_1} \right)^{k/2} + Mk^k  \left( \frac{\tau}{nb_2} \right)^{(k+1)/2}   \right\},
        \end{align*}
        where we used $\left(\frac{n}{k}\right)^k\le \Mycomb[n]{k} $ in $(\dagger)$ and $ \max\{r,s \} \leq r + s$ in $(\ddagger)$.
        \endgroup
        \end{proof}
        
    \section{Shrinkage Estimator of Covariance Matrix}
 In this section, we specialize and simplify the calculations of Example~\ref{Ex:cov} for $K(x,y)=\langle x,y\rangle_2,\,x,y\in\mathbb{R}^d$, yielding a shrinkage estimator of the covariance matrix on $\mathbb{R}^d$. First, we present a lemma which is useful to obtain the simplified expressions of $\hat{\Delta}_{\textup{general}}$, $\hat{\Delta}_{\textup{degen}}$ and $\Vert\hat{C}\Vert^2_\mathcal{H}$ in Proposition~\ref{appxpro:cov}.

\begin{appxlem}\label{appxlem:cov}
Let $\bar{X} = \frac{1}{n} \sum_{i=1}^n X_i $, $\tilde{X}_i = X_{i} - \bar{X} $,  $\hat{\Sigma} = \frac{1}{n} \sum_{i=1}^n \tilde{X_i} \tilde{X_i}^\top $ be the sample mean, centered random variables and empirical covariance matrix respectively, based on independent random variables $(X_i)^n_{i=1}$. Then, the following hold:
\begin{flalign}
 &(i) \sum_{i,j=1}^n \inp{X_i - X_j}{X_i - X_j}_{2}^2 =   2n\sum^n_{i=1} \Vert \tilde{X_i} \Vert_{2}^4 +  4n^2 \textup{Tr} [\hat{\Sigma}^2] + 2 n^2 \textup{Tr}^2 [\hat{\Sigma}]; && \label{Eq:inp-i,j} \\
    &(ii)\sum_{i,j,l=1}^n \inp{X_i-X_j}{X_j- X_{l}}_{2}^2 = n^2 \sum_{i=1}^n \Vert \tilde{X_i} \Vert_{2}^{4} + 3n^3 \textup{Tr}[\hat{\Sigma}^2]; && \label{Eq:inp-i,j,l} \\
    &(iii)\sum_{i,j,l,m=1}^n \inp{X_i - X_j}{ X_{l} - X_{m}}^2_{2} =4n^4 \textup{Tr}[\hat{\Sigma}^2]. \label{Eq:inp-i,j,k,l}
\end{flalign}
\end{appxlem}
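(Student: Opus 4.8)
The plan is to rewrite every sum in terms of the centered vectors $\tilde{X}_i = X_i - \bar{X}$, using the two elementary facts $X_i - X_j = \tilde{X}_i - \tilde{X}_j$ and $\sum_{i=1}^n \tilde{X}_i = 0$. Setting $a_{ij} := \inp{\tilde{X}_i}{\tilde{X}_j}_2$ and $c_i := a_{ii} = \norm{\tilde{X}_i}_2^2$, the centering constraint yields the vanishing relations $\sum_{i=1}^n a_{ij} = \inp{\sum_{i} \tilde{X}_i}{\tilde{X}_j}_2 = 0$ for every fixed $j$, and symmetrically $\sum_{j=1}^n a_{ij} = 0$ for every fixed $i$. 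I would also record at the outset the two bookkeeping identities $\sum_{i=1}^n c_i = n\,\textup{Tr}[\hat{\Sigma}]$ and $\sum_{i,j=1}^n a_{ij}^2 = n^2\,\textup{Tr}[\hat{\Sigma}^2]$; the latter follows from $\textup{Tr}\big[\tilde{X}_i \tilde{X}_i^\top \tilde{X}_j \tilde{X}_j^\top\big] = a_{ij}^2$.

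Each of the three identities then reduces to expanding a perfect square of a linear combination of the $a_{ij}$ and $c_i$, summing over the free indices, and discarding the cross terms. For (iii) I would write $\inp{X_i - X_j}{X_l - X_m}_2 = a_{il} - a_{im} - a_{jl} + a_{jm}$; upon squaring and summing over all four indices, each of the four squared terms contributes $n^2 \sum_{i,l} a_{il}^2 = n^4\,\textup{Tr}[\hat{\Sigma}^2]$ (two indices are free), while every one of the six cross terms factors through a sum of the form $\sum_\bullet a_{\bullet\,\cdot}$ and hence vanishes, giving $4n^4\,\textup{Tr}[\hat{\Sigma}^2]$. For (ii) I would use $\inp{X_i - X_j}{X_j - X_l}_2 = a_{ij} - a_{il} - c_j + a_{jl}$: the three squared terms $a_{ij}^2, a_{il}^2, a_{jl}^2$ each give $n^3\,\textup{Tr}[\hat{\Sigma}^2]$, the term $c_j^2$ gives $n^2 \sum_i c_i^2 = n^2\sum_i \norm{\tilde{X}_i}_2^4$, and all cross terms vanish by centering. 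For (i) I would expand $(c_i + c_j - 2a_{ij})^2$: the two diagonal squared terms give $2n\sum_i \norm{\tilde{X}_i}_2^4$, the term $2c_i c_j$ gives $2\big(\sum_i c_i\big)^2 = 2n^2\,\textup{Tr}^2[\hat{\Sigma}]$, the term $4a_{ij}^2$ gives $4n^2\,\textup{Tr}[\hat{\Sigma}^2]$, and the mixed terms $c_i a_{ij}$ and $c_j a_{ij}$ vanish since $\sum_j a_{ij} = \sum_i a_{ij} = 0$.

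The argument involves no analytic difficulty; the only real work is the combinatorial bookkeeping of the expansions. The single point requiring care is to verify, term by term, that every cross term indeed contains at least one factor $a_{\bullet\,\cdot}$ that is summed over a free index — which is precisely what forces it to vanish — while correctly identifying the surviving squared and diagonal terms and tracking the powers of $n$ accumulated from summing out indices that do not appear in the summand. Once the vanishing relations and the two trace identities are in place, the three claimed expressions follow by direct substitution.
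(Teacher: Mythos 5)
Your proof is correct and follows essentially the same route as the paper's: center the variables, exploit $\sum_i \tilde{X}_i = 0$ to kill every cross term, and convert the surviving sums via $\sum_{i,j}\langle\tilde{X}_i,\tilde{X}_j\rangle_2^2 = n^2\,\textup{Tr}[\hat{\Sigma}^2]$ and $\sum_i \Vert\tilde{X}_i\Vert_2^2 = n\,\textup{Tr}[\hat{\Sigma}]$. The only cosmetic difference is that for (ii) and (iii) the paper writes $\langle u,v\rangle_2^2 = \textup{Tr}[uu^\top vv^\top]$ and sums the rank-one matrices inside the trace first (so the centering cancellations happen at the matrix level, e.g.\ $\sum_j(\tilde{X}_i-\tilde{X}_j)(\tilde{X}_i-\tilde{X}_j)^\top = n\tilde{X}_i\tilde{X}_i^\top + n\hat{\Sigma}$), whereas you expand the scalar square and check each cross term vanishes; the two bookkeeping schemes are equivalent.
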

\begin{proof}
$(i)$
\begin{align*}
&\sum_{i,j=1}^n \inp{X_i - X_j}{X_i - X_j}_{2}^2 
= \sum_{i,j} \Vert X_i - X_j\Vert^4 = \sum_{i,j} \Vert\tilde{X_i}- \tilde{X_j}\Vert^4 \\
&= \sum_{i,j} \left(\Vert \tilde{X}_i\Vert_{2}^2 + \Vert \tilde{X_j}\Vert_{2}^2 - 2 \langle \tilde{X_i},\tilde{X_j}\rangle_2   \right)^2 \\
&= \sum_{i,j} \Vert \tilde{X_i} \Vert_{2}^4 + \Vert \tilde{X_j} \Vert_{2}^4 + 4 \langle \tilde{X_i}, \tilde{X_j}\rangle^2_2 - 4\Vert \tilde{X}_i \Vert^2_{2}\langle \tilde{X_i},\tilde{X_j}\rangle_2 -4 \Vert \tilde{X}_{j} \Vert_{2}^2 \langle\tilde{X}_{j}, \tilde{X}_i\rangle_2 + 2 \Vert \tilde{X}_i \Vert_{2}^2  \Vert \tilde{X}_j \Vert_{2}^2\\
&\stackrel{(*)}{=}2n\sum_{i} \Vert \tilde{X_i} \Vert_{2}^4+4\sum_{i,j} \langle \tilde{X_i}, \tilde{X_j}\rangle^2_2+2 \left(\sum_i\Vert \tilde{X}_i \Vert_{2}^2 \right)^2 \stackrel{(**)}{=} 2n\sum_{i} \Vert \tilde{X_i} \Vert_{2}^4 +  4n^2 \text{Tr} [\hat{\Sigma}^2] + 2 n^2\text{Tr}^2 [\hat{\Sigma}],
\end{align*}
where we used $\sum_j \tilde{X}_j=0$ in $(*)$, $\sum_{i,j}\langle \tilde{X}_i,\tilde{X}_j\rangle^2_2=\sum_{i,j}\text{Tr}\left[\tilde{X}_i\tilde{X}^\top_i\tilde{X}_j\tilde{X}^\top_j\right]=n^2\text{Tr}[\hat{\Sigma}^2]$ and $\sum_i\Vert \tilde{X}_i\Vert^2_2=\sum_i\text{Tr}[\tilde{X}_i\tilde{X}^\top_i]=n\text{Tr}[\hat{\Sigma}]$ in $(**)$.\vspace{1mm}\\

\noindent $(ii)$
\begin{align*}
&\sum_{i,j,l=1}^n \inp{X_i-X_j}{X_i- X_{l}}_{2}^2 
= \sum_{i} \text{Tr} \left[ \sum_{j} (X_{i} - X_{j})(X_{i} - X_{j})^\top \sum_{l} (X_i - X_{l})(X_{i} - X_{l})^\top \right] \\
&= \sum_{i} \text{Tr} \left[ \left\{\sum_{j} \left(\tilde{X}_i \tilde{X}_i^\top - \tilde{X}_i \tilde{X}_j^\top - \tilde{X}_j \tilde{X}_i^\top + \tilde{X}_j \tilde{X}_j^\top\right)\right\}^2  \right] = \sum_{i} \text{Tr} \left[ \left(n \tilde{X}_{i} \tilde{X}_{i}^\top +  n \hat{\Sigma}\right)^2 \right] \\
&= \sum_{i} n^2\Vert\tilde{X}_i\Vert_{2}^{4} + n^2 \text{Tr}[\hat{\Sigma}^2] +2 n^2 \sum_i\text{Tr} [\tilde{X}_{i} \tilde{X}_{i}^\top \hat{\Sigma}] = n^2 \sum_{i} \Vert\tilde{X}_i\Vert_{2}^{4} + 3n^3 \text{Tr}[\hat{\Sigma}^2].
\end{align*}
$(iii)$
\begin{align*}
&\sum^n_{i,j,l,m=1} \inp{X_i - X_j}{ X_{l} - X_{m}}^2_{2}= \sum_{i,j,l,m} \inp{\tilde{X_i} - \tilde{X_j}}{ \tilde{X_{l}} - \tilde{X_{m}}}^2_{2}\\
&= \text{Tr} \left[ \left\{\sum_{i,j} (\tilde{X_i} - \tilde{ X_j}) ( \tilde{X_i} - \tilde{X_{j}})^\top  \right\}^2\right] 
= \text{Tr} \left[\left\{ \sum_{i,j} \tilde{X_i}\tilde{X_i}^\top+  \tilde{ X_j} \tilde{X_j}^\top  \right\}^2\right]
= 4n^4 \text{Tr}[\hat{\Sigma}^2].
\end{align*}
Hence the proof.
\end{proof}

\begin{appxpro}\label{appxpro:cov}
For $K(x,y)=\langle x,y\rangle_2,\,x,y\in\mathbb{R}^d$ and $f^*=I_d$ (the $d\times d$ identity matrix) in Example~\ref{Ex:cov} of the manuscript, the following hold:
\begin{align*}
   \hat{\Delta}_{\textup{general}} &=  \frac{1}{(n-2)(n-3)} \sum_{i=1}^{n} \Vert \tilde{X}_i \Vert_{2}^4 - \frac{n(n+1)}{(n-1)^2(n-3)} \textup{Tr} [\hat{\Sigma}^2]
   - \frac{n}{(n-1)(n-2)(n-3)} \textup{Tr}^2[\hat{\Sigma}],\\
   \hat{\Delta}_{\textup{degen}} &=\frac{n (n^2-3n+4)}{ 2\cdot\Mycomb[n]{2}\cdot \Myperm[n]{4}}  \sum_{i=1}^{n} \Vert \tilde{X_i} \Vert_{2}^4 -  \frac{2n^2(n-2)}{\Mycomb[n]{2}\cdot \Myperm[n]{4}} \textup{Tr}[\hat{\Sigma}^2] +  \frac{n^2 (n^2-5n+4)}{2\cdot \Mycomb[n]{2} \cdot\Myperm[n]{4} }  \textup{Tr}^2 [\hat{\Sigma}],\,\,\,\text{and}\\
   \Vert \hat{C}-I\Vert^2_F&=\frac{n^2}{(n-1)^2} \textup{Tr}[\hat{\Sigma}^2] -\frac{2n}{n-1} \textup{Tr} [\hat{\Sigma}] + d,
\end{align*}
where $(\tilde{X}_i)^n_{i=1}$ and $\hat{\Sigma}$ are defined in Lemma~\ref{appxlem:cov}, and $\hat{C} = \frac{1}{\Mycomb[n]{2}} \sum_{i < j} \frac{(X_i - X_j)(X_i - X_{j})^\top}{2}$, with $\Vert\cdot\Vert_F$ being the Frobenius norm.
\end{appxpro}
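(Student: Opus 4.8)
The plan is to reduce all three identities to the three ``full-index'' sums evaluated in Lemma~\ref{appxlem:cov}. First I would substitute the linear kernel $K(x,y)=\langle x,y\rangle_2$ into the expressions for $\hat\Delta_{\textup{general}}$ and $\hat\Delta_{\textup{degen}}$ already derived in Examples~\ref{Ex:cov} and \ref{Ex:cov-2}. The key observation is that each bracketed term collapses to a squared inner product of difference vectors: $K(X_i,X_i)-K(X_i,X_l)-K(X_i,X_j)+K(X_j,X_l)=\langle X_i-X_j,X_i-X_l\rangle_2$, next $K(X_i,X_i)-2K(X_i,X_j)+K(X_j,X_j)=\Vert X_i-X_j\Vert_2^2$, and finally $K(X_i,X_l)-K(X_i,X_m)-K(X_j,X_l)+K(X_j,X_m)=\langle X_i-X_j,X_l-X_m\rangle_2$. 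Thus $\hat\Delta_{\textup{general}}$ and $\hat\Delta_{\textup{degen}}$ become fixed linear combinations, with the displayed combinatorial prefactors, of the three restricted sums $S_2:=\sum_{i\ne j}\Vert X_i-X_j\Vert_2^4$, $S_3:=\sum_{i\ne j\ne l}\langle X_i-X_j,X_i-X_l\rangle_2^2$ and $S_4:=\sum_{i\ne j\ne l\ne m}\langle X_i-X_j,X_l-X_m\rangle_2^2$, all over pairwise distinct indices.

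Next I would pass from the distinct-index sums to the unrestricted sums of Lemma~\ref{appxlem:cov} by inclusion--exclusion over coinciding indices. Since a summand vanishes whenever the two indices inside a single difference agree, only \emph{cross}-coincidences contribute. For $S_3$ the only surviving coincidence is $j=l$, which reproduces $S_2$, so that $S_3$ equals the unrestricted sum $\eqref{Eq:inp-i,j,l}$ minus $S_2$. For $S_4$ a short case analysis shows that each of the four single cross-coincidences ($i=l$, $i=m$, $j=l$, $j=m$) contributes a copy of $S_3$ and each of the two perfect-matching double coincidences ($\{i=l,\,j=m\}$ and $\{i=m,\,j=l\}$) contributes a copy of $S_2$, using that the squared inner products are invariant under the sign changes and relabellings identifying these coincidence sums; hence $S_4$ equals the unrestricted sum $\eqref{Eq:inp-i,j,k,l}$ minus $4S_3$ minus $2S_2$. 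Substituting $\eqref{Eq:inp-i,j}$--$\eqref{Eq:inp-i,j,k,l}$ then writes $S_2,S_3,S_4$ purely in terms of $P:=\sum_i\Vert\tilde{X}_i\Vert_2^4$, $Q:=\textup{Tr}[\hat{\Sigma}^2]$ and $R:=\textup{Tr}^2[\hat{\Sigma}]$.

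Finally I would insert these into the two $\hat\Delta$ formulas and collect the coefficients of $P$, $Q$ and $R$. Using $\Mycomb[n]{2}=n(n-1)/2$, $\Myperm[n]{2}=n(n-1)$, $\Myperm[n]{3}=n(n-1)(n-2)$ and $\Myperm[n]{4}=n(n-1)(n-2)(n-3)$, each coefficient reduces to a single rational function of $n$ after combining over a common denominator; for instance the coefficient of $P$ in $\hat\Delta_{\textup{general}}$ simplifies, via $(n-2)(n-3)+2(2n-3)=n(n-1)$, to $1/[(n-2)(n-3)]$, and the $Q$- and $R$-coefficients collapse the same way to match the claimed expressions, with the analogous computation giving $\hat\Delta_{\textup{degen}}$. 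The main obstacle is purely combinatorial: getting the multiplicities $4$ and $2$ of the coincidence terms in the fourfold sum exactly right, since an over- or under-count there propagates into every coefficient; once these are fixed, the remaining algebra is routine if lengthy.

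For the last identity I would argue separately. From $\sum_{i,j}(X_i-X_j)(X_i-X_j)^\top=2n\sum_i\tilde{X}_i\tilde{X}_i^\top=2n^2\hat{\Sigma}$ one gets $\sum_{i<j}(X_i-X_j)(X_i-X_j)^\top=n^2\hat{\Sigma}$, so $\hat{C}=\tfrac{1}{\Mycomb[n]{2}}\cdot\tfrac12 n^2\hat{\Sigma}=\tfrac{n}{n-1}\hat{\Sigma}$. Expanding $\Vert\hat{C}-I\Vert_F^2=\textup{Tr}[\hat{C}^2]-2\textup{Tr}[\hat{C}]+\textup{Tr}[I]$ and using $\textup{Tr}[I]=d$ then yields $\tfrac{n^2}{(n-1)^2}\textup{Tr}[\hat{\Sigma}^2]-\tfrac{2n}{n-1}\textup{Tr}[\hat{\Sigma}]+d$, as claimed.
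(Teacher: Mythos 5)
Your proposal is correct and follows essentially the same route as the paper's proof: substitute the linear kernel into the expressions from Examples~\ref{Ex:cov} and \ref{Ex:cov-2}, convert the distinct-index sums to the unrestricted sums of Lemma~\ref{appxlem:cov} by inclusion--exclusion (your identity $S_4 = T - 4S_3 - 2S_2$ is the same as the paper's $T - 4U_3 + 2U_2$ after writing $U_3 = S_3 + S_2$), and collect coefficients, with the final identity obtained from $\hat{C} = \tfrac{n}{n-1}\hat{\Sigma}$ exactly as in the paper. Your coincidence multiplicities and the simplification $(n-2)(n-3)+2(2n-3)=n(n-1)$ both check out.
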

\begin{proof} We have from Example~\ref{Ex:cov} that
\begin{align}
\hat{\Delta}_{\text{general}} &= \frac{2n-4}{4\cdot\Mycomb[n]{2}\cdot \Myperm[n]{3}} \underbrace{ \sum_{i \neq j \neq l} \inp{X_{i} - X_{j}}{X_{i}- X_{l}}_{2}^2}_{\circled{\tiny{1}}}  + \frac{1}{4\cdot\Mycomb[n]{2}\cdot \Myperm[n]{2}}  \underbrace{\sum_{i \neq j}\norm{X_{i} - X_{j}}_{2}^4}_{\circled{\tiny{2}}}  \nonumber\\
&\qquad - \frac{2n-3}{4\cdot\Mycomb[n]{2}\cdot \Myperm[n]{4} } \underbrace{\sum_{i \neq j \neq l \neq m} \inp{X_i - X_j}{X_{l}- X_{m}}^2_{2}}_{\circled{\tiny{3}}}.\label{Eq:1-3}
\end{align}
We now simplify $\circled{\tiny{1}}-\circled{\tiny{3}}$ as follows.
\begin{align*}
    \circled{\tiny{1}}& = \sum_{i \neq j \neq l} \inp{X_{i} - X_{j}}{X_{i}- X_{l}}_{2}^2  
= \left[\sum_{i} \left( \sum_{j}  - \sum_{j=i} \right) \left( \sum_{l}  - \sum_{l \in \{j,i\}}   \right) \right] \inp{X_{i} - X_{j}}{X_{i}- X_{l}}_{2}^2 \\
    &= \left[ \sum_{i,j,l} - \sum_{i,j, l \in \{j,i\}  } - \sum_{i,j=i, l}   + \sum_{i,j =i , l \in \{j,i\}}  \right] \inp{X_{i} - X_{j}}{X_{i}- X_{l}}_{2}^2 \\
    &= \sum_{i,j,l} \inp{X_{i} - X_{j}}{X_{i}- X_{l}}_{2}^2 - \sum_{i,j, l \in \{j,i\}  } \inp{X_{i} - X_{j}}{X_{i}- X_{l}}_{2}^2 \\
    & \qquad - \sum_{i,j=i, l}  \inp{X_{i} - X_{j}}{X_{i}- X_{l}}_{2}^2 +  \sum_{i,j =i , l \in \{j,i\}} \inp{X_{i} - X_{j}}{X_{i}- X_{l}}_{2}^2 \\
     &= \sum_{i,j,l} \inp{X_{i} - X_{j}}{X_{i}- X_{l}}_{2}^2 - \sum_{i,j, l=j  } \inp{X_{i} - X_{j}}{X_{i}- X_{l}}_{2}^2 
          \end{align*}
\begin{align*}
     &= \sum_{i,j,l} \inp{X_{i} - X_{j}}{X_{i}- X_{l}}_{2}^2 - \sum_{i,j } \inp{X_{i} - X_{j}}{X_{i}- X_{j}}_{2}^2 \\
     & \stackrel{\eqref{Eq:inp-i,j,l} , \eqref{Eq:inp-i,j}}{=}  n^2 \sum_{i=1}^n \Vert\tilde{X_i}\Vert_{2}^{4} + 3n^3 \text{Tr}[\hat{\Sigma}^2] -  \left(2n\sum_{i=1}^n \Vert \tilde{X_i} \Vert_{2}^4 +  4n^2 \text{Tr} [\hat{\Sigma}^2] + 2 n^2 \text{Tr}^2 [\hat{\Sigma}] \right) \\
     &= (n^2 - 2n ) \sum_{i=1}^n \Vert\tilde{X}_i\Vert_{2}^{4} + (3n^3 - 4n^2) \text{Tr}[\hat{\Sigma}^2]  - 2 n^2 \text{Tr}^2 [\hat{\Sigma}],\\
    \circled{\tiny{2}} &= \sum_{i \neq j}\Vert X_{i} - X_{j}\Vert_{2}^4 = \sum_{i} \left[ \sum_{j} - \sum_{j = i} \right] \Vert X_{i} - X_{j}\Vert_{2}^4 = \sum_{i,j} \inp{X_{i} - X_{j}}{X_{i}- X_{j}}_{2}^2 \\ &\stackrel{\eqref{Eq:inp-i,j}}{=}   2n\sum^n_{i=1} \Vert \tilde{X_i} \Vert_{2}^4 +  4n^2 \textup{Tr} [\hat{\Sigma}^2] + 2 n^2 \textup{Tr}^2 [\hat{\Sigma}],
\end{align*}
and
\begin{align*}
    \circled{\tiny{3}} &=  \sum_{i \neq j \neq l \neq m} \inp{X_i - X_j}{X_{l}- X_{m}}^2_{2} \\
    &= \left[\sum_{i} \left( \sum_{j}  - \sum_{j=i} \right) \left( \sum_{l}  - \sum_{l \in \{j,i\}}   \right) \left( \sum_{m} - \sum_{m \in \{i,j,l\}} \right) \right]  \inp{X_i - X_j}{X_{l}- X_{m}}^2_{2} \\
    &= \left[ \sum_{i,j}  \left( \sum_{l}  - \sum_{l \in \{j,i\}}   \right) \left( \sum_{m} - \sum_{m \in \{i,j\}} \right)  \right] \inp{X_i - X_j}{X_{l}- X_{m}}^2_{2} \\
    &= \left[ \sum_{i,j,l,m} - \sum_{i,j,l,m \in \{i,j\}  }  - \sum_{i,j, l \in \{j,i\} , m } + \sum_{i,j, l \in \{j,i\}, m \in \{i,j\}} \right] \inp{X_i - X_j}{X_{l}- X_{m}}^2_{2} \\
    &= \sum_{i,j,l,m} \inp{X_i - X_j}{X_{l}- X_{m}}^2_{2} - 4 \sum_{i,j,l} \inp{X_i - X_j}{X_{l}- X_{j}}^2_{2}+ 2 \sum_{i,j} \inp{X_i - X_j}{X_{i}- X_{j}}^2_{2}  \\
    &\stackrel{\eqref{Eq:inp-i,j}-\eqref{Eq:inp-i,j,k,l}}{=} 4n^4 \textup{Tr}[\hat{\Sigma}^2] - 4 \left[ n^2 \sum_{i=1}^n \Vert \tilde{X_i} \Vert_{2}^{4} + 3n^3 \textup{Tr}[\hat{\Sigma}^2] \right] \\
    &\qquad\qquad\qquad
    + 2 \left[ 2n\sum^n_{i=1} \Vert \tilde{X_i} \Vert_{2}^4 +  4n^2 \textup{Tr} [\hat{\Sigma}^2] + 2 n^2 \textup{Tr}^2 [\hat{\Sigma}]\right] \\
    &= (4n -4 n^2) \sum_{i=1}^{n} \Vert \tilde{X_i} \Vert_{2}^4 + (4n^4-12n^3+ 8n^2) \textup{Tr}[\hat{\Sigma}^2] + 4n^2  \textup{Tr}^2 [\hat{\Sigma}].
\end{align*}
Combining $\circled{\tiny{1}}$, $\circled{\tiny{2}}$, and $\circled{\tiny{3}}$ in \eqref{Eq:1-3} yields\\
\begin{align*}
    \hat{\Delta}_{\text{general}} 
    &=   \frac{2n-4}{4\cdot\Mycomb[n]{2}\cdot \Myperm[n]{3}} \left[ (n^2 - 2n ) \sum_{i=1}^n \Vert\tilde{X}_i\Vert_{2}^{4} + (3n^3 - 4n^2) \text{Tr}[\hat{\Sigma}^2]  - 2 n^2 \text{Tr}^2 [\hat{\Sigma}] \right] \\
    & \qquad + \frac{1}{4\cdot\Mycomb[n]{2}\cdot \Myperm[n]{2}} \left[ 2n\sum^n_{i=1} \Vert \tilde{X_i} \Vert_{2}^4 +  4n^2 \textup{Tr} [\hat{\Sigma}^2] + 2 n^2 \textup{Tr}^2 [\hat{\Sigma}] \right] \\
    &\qquad -  \frac{2n-3}{4\cdot\Mycomb[n]{2}\cdot \Myperm[n]{4} } \left[ (4n -4 n^2) \sum_{i=1}^{n} \Vert \tilde{X_i} \Vert_{2}^4 + (4n^4-12n^3+ 8n^2) \textup{Tr}[\hat{\Sigma}^2] + 4n^2 \textup{Tr}^2 [\hat{\Sigma}] \right] 
        \end{align*}
\begin{align*}
    &= \frac{1}{4\cdot\Mycomb[n]{2}\cdot \Myperm[n]{2}} \left[ \left( 2(n^2-2n) + 2n - \frac{(2n-3)(4n-4n^2)}{(n-2)(n-3)} \right) \sum_{i=1}^{n} \Vert \tilde{X_i} \Vert_{2}^4 \right] \\
     &\qquad +\frac{1}{4\cdot\Mycomb[n]{2}\cdot \Myperm[n]{2}} \left[ \left( 2(3n^3 - 4n^2)+ 4n^2 - \frac{(4n^4-12n^3+ 8n^2)(2n-3)}{(n-2)(n-3)} \right) \textup{Tr} [\hat{\Sigma}^2]   \right] \\
     &\qquad +\frac{1}{4\cdot\Mycomb[n]{2}\cdot \Myperm[n]{2}} \left[ \left(-4n^2 + 2n^2- \frac{4n^2 (2n-3)}{(n-2)(n-3)} \right) \textup{Tr}^2[\hat{\Sigma}]   \right] \\
     &= \frac{1}{(n-2)(n-3)} \sum_{i=1}^{n} \Vert \tilde{X_i} \Vert_{2}^4 - \frac{n(n+1)}{(n-1)^2(n-3)} \textup{Tr} [\hat{\Sigma}^2]  - \frac{n}{(n-1)(n-2)(n-3)} \textup{Tr}^2 [\hat{\Sigma}].
\end{align*}
Doing similar analysis for $\hat{\Delta}_{\text{degen}}$ yields
\begin{align*}
    \hat{\Delta}_{\text{degen}} 
    &\stackrel{(*)}{=} \frac{1}{4\cdot\Mycomb[n]{2} \cdot\Myperm[n]{2} } \cdot \circled{\tiny{2}} - \frac{1}{4\cdot\Mycomb[n]{2}\cdot \Myperm[n]{4}} \cdot\circled{\tiny{3}} \\
    &= \frac{1}{4\cdot\Mycomb[n]{2}\cdot \Myperm[n]{2}} \left[  2n\sum^n_{i=1} \Vert \tilde{X_i} \Vert_{2}^4 +  4n^2 \textup{Tr} [\hat{\Sigma}^2] + 2 n^2 \textup{Tr}^2 [\hat{\Sigma}] \right] \\
    & \quad  - \frac{1}{4\cdot\Mycomb[n]{2}\cdot \Myperm[n]{4}} \left[  (4n -4 n^2) \sum_{i=1}^{n} \Vert \tilde{X_i} \Vert_{2}^4 + (4n^4-12n^3+ 8n^2) \textup{Tr}[\hat{\Sigma}^2] + 4n^2 \textup{Tr}^2 [\hat{\Sigma}] \right] \\
    &=  \frac{n (n^2-3n+4)}{2\cdot\Mycomb[n]{2}\cdot \Myperm[n]{4}}  \sum_{i=1}^{n} \Vert \tilde{X_i} \Vert_{2}^4 -  \frac{2n^2(n-2)}{\Mycomb[n]{2}\cdot \Myperm[n]{4}} \textup{Tr}[\hat{\Sigma}^2] +  \frac{n^2 (n^2-5n+4)}{2\cdot\Mycomb[n]{2} \cdot\Myperm[n]{4} }  \textup{Tr}^2 [\hat{\Sigma}],
\end{align*}
where $(*)$ is exactly the restatement of Example~\ref{Ex:cov-2} with $K(x,y)=\langle x,y\rangle_2,\,x,y\in\mathbb{R}^d$.\\

Note that, 
    \begin{align*}
        \hat{C}&= \frac{1}{\Mycomb[n]{2}} \sum_{i< j} \frac{(X_i - X_j)(X_i - X_{j})^\top}{2}  = \frac{1}{2\cdot\Myperm[n]{2}} \sum_{i, j} (X_i - X_j)(X_i - X_{j})^\top\\
       &= \frac{1}{2\cdot\Myperm[n]{2}} \sum_{i,j} (\tilde{X}_{i} - \tilde{X_{j}})(\tilde{X_{i}} - \tilde{X_{j}})^\top\\
        &= \frac{1}{2\cdot\Myperm[n]{2}} \sum_{i , j} (\tilde{X}_{i} \tilde{X}_{i}^\top - \tilde{X}_j \tilde{X}_{i}^\top - \tilde{X}_i \tilde{X}_{j}^\top + \tilde{X}_j \tilde{X}_{j}^\top) = \frac{n}{n-1} \hat{\Sigma}
    \end{align*}
and
    \begin{align*}
        \Vert\hat{C} - I_d \Vert_{F}^2 & =\left\Vert \frac{n}{n-1} \hat{\Sigma}- I_d \right\Vert_{F}^2 = \frac{n^2}{(n-1)^2} \text{Tr}[\hat{\Sigma}^2] -\frac{2n}{n-1} \text{Tr} [\hat{\Sigma}] + d,
    \end{align*}
    thereby proving the result.
\end{proof}

\end{appendices}    
\end{document}